\newtheorem{thm}{Theorem}[section]
\newtheorem{prop}{Proposition}[section]
\newtheorem{lemma}{Lemma}[section]
\newtheorem{ass}{Assumption}[section]
\theoremstyle{definition}
\theoremstyle{remark}
\newtheorem{rem}{Remark}[section]
\newcommand{\R}{\mathbb{R}}
\newcommand{\N}{\mathbb{N}}
\newcommand{\dt}{\partial_t}
\newcommand{\dth}{\partial_\theta}
\newcommand{\dthth}{\partial_{\theta\theta}}
\newcommand{\abs}[1]{\left\vert #1 \right\vert}
\newcommand{\norm}[1]{\left\Vert #1 \right\Vert}
\newcommand{\1}{\mathbf{1}} % indicator function
\newcommand{\rinf}{\underline{r}}
\newcommand{\rmax}{r_{\max}}
\newcommand{\ds}{\displaystyle}
\newcommand{\ub}{\overline{u}}
\newcommand{\rhod}{\underline{\rho}}
\numberwithin{equation}{section}
\newcommand{\dedicace}[1]{%
  \begin{center}
    \vspace*{1em}
    \textit{#1}
    \vspace*{1em}
  \end{center}
}
\definecolor{aquamarine}{rgb}{0.13, 0.68, 0.8} 
\definecolor{darkgreen}{rgb}{0.2, 0.5, 0.2} 
\title{Existence of {\it two} thresholds in a bistable equation with nonlocal competition}
\author{
  Matthieu Alfaro$^{\text{a}}$, 
  Cédric Chane Ki Chune$^{\text{b,c}}$, 
  Lionel Roques$^{\text{b}}$ \\[1ex]
  \footnotesize $^{\text{a}}$ Univ. Rouen Normandie, CNRS, LMRS UMR 6085, F-76000 Rouen, France, \\ \footnotesize  $\&$ Univ. Lille, CNRS, UMR 8524, LPP, F-59000 Lille, France. \\ 
  \footnotesize $^{\text{b}}$ INRAE, BioSP, 84914 Avignon, France.\\ 
  \footnotesize $^{\text{c}}$   INRAE, VetAgro Sup, UMR EPIA, 69280 Marcy l’Etoile, France.
}
\date{}
\begin{document}

\maketitle

\dedicace{To Prof. Hiroshi Matano, an inspiring mathematician and friend.}

\begin{abstract}  We consider a nonlocal bistable reaction-diffusion equation, which serves as a model for a population structured by a phenotypic trait, subject to mutation, trait-dependent fitness, and nonlocal competition. Within this replicator-mutator  framework, we further incorporate a \lq\lq pseudo-Allee effect'' so that the long time behavior (extinction vs. survival)  depends on the  size of the initial data.   

After proving the well-posedness of the associated Cauchy problem, we investigate its long-time behavior.  We first show that small initial data lead to extinction. More surprisingly, we then prove that that extinction may also occur for too large initial data, in particular when selection is not strong enough. Finally, we  exhibit situations where intermediate initial data lead to persistence, thereby revealing the existence of (at least) {\it two} thresholds.  These results stand in sharp contrast with the behavior observed in local bistable equations.
\\

\noindent{\textsc{Keywords:}  nonlocal  bistable reaction-diffusion equations,   extinction vs. survival, multiple thresholds, evolutionary biology. }\\

\noindent{\textsc{AMS Subject Classifications:}  35K57,  35B40, 92D15. }
\end{abstract}

%\tableofcontents

\section{Introduction}\label{s:intro}

We consider $u=u(t,\theta)$ solving the nonlocal reaction-diffusion equation
\begin{equation} \label{eq:main}
    \partial_t u = \partial_{\theta \theta} u + r(\theta) \, u - u \, \rho_u(t) - f(u), \quad t > 0, \, \theta \in \mathbb{R},
\end{equation}
with
\begin{equation}
    \rho_u(t) := \int_{\mathbb{R}} u(t, \theta ) \,  d\theta ,
\end{equation}
and starting from a nonnegative initial condition $u_0$. We assume that $r\le \rmax$ for some constant $\rmax>0$. The precise assumptions on $f$ will be detailed in the next section. For now, we simply note that $f \ge 0$
%, is strictly positive only on the interval $(0, 2\varepsilon)$, 
 satisfies $f(0)=0$ and $f(s) > \rmax\, s$ on the interval $(0, \varepsilon)$, for some $\varepsilon > 0$, see an example in Fig.~\ref{fig:schematic_function_f}.

\begin{figure}[h!]
    \centering
    \begin{tikzpicture}[scale=2, >=latex]
        % Axes
        \draw[->] (-0.0,0) -- (3,0) node[below] {$s$};
        \draw[->] (0,-0.2) -- (0,1.) node[left] {$f(s)$};

        \coordinate (O) at (0,0);        
        \coordinate (Eps) at (1,0);     
       % \coordinate (S0) at (1.2,0);      
        \coordinate (DeuxEps) at (2,0);   

        % Dashed line for r*s
        \draw[dashed] (0,0) -- ++(2,0.84) node[pos=0.9,above] {$\rmax\,s$};

        % Thick blue lines for the function
       % \draw[thick,blue] (-0.5,0) -- (O);
        \draw[thick,blue]
    (0,0)
    .. controls (0.3,0.3) and (0.7,0.9) .. (1,0.9)  
    .. controls (1.3,0.9) and (1.7,0.1)  .. (2,0);
        \draw[thick,blue] (2,0) -- (2.8,0);

        % Dotted vertical guide for s0
        \draw[dotted] (Eps) -- ++(0,1);
        
        % Labels
        \node[below left] at (O) {$0$};
        \node[below] at (Eps) {$\varepsilon$};
       
        \node[below] at (DeuxEps) {$2\varepsilon$};
    \end{tikzpicture}
    \caption{Schematic representation of a bump function $f$ inducing an Allee effect. }
    \label{fig:schematic_function_f}
\end{figure}
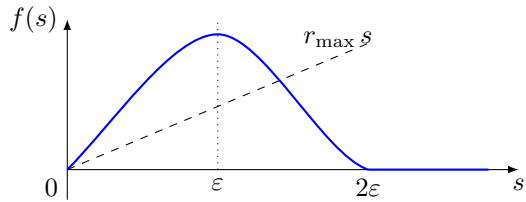

\paragraph{Biological context and motivation.} Equation \eqref{eq:main} describes the dynamics of a population density of competing individuals. The variable $\theta$ represents  some  phenotypic trait of individuals. The diffusion term $\partial_{\theta \theta}$ represents the effect of mutations. The fitness of individuals with phenotype $\theta$ is described by the function $r(\theta)$. Competition among all individuals leads to the nonlocal term $\rho_u(t)$. The function $f$ induces negative fitness at low population density, modeling a strong Allee effect due to the assumption $f(s) > \rmax s$ for sufficiently small $s$. In the example of Fig.~\ref{fig:schematic_function_f}, we assume that $f$ has no further effect when $u(t,\theta) \geq 2\varepsilon$.

We argue that the standard replicator-mutator equation (see, e.g., \cite{AlfCar17, MarRoq16, TsiLev96}), which describes the dynamics of phenotypically structured populations but with $f \equiv 0$ (see below for a more detailed description), does not fully capture certain essential aspects of adaptive invasion into a new environment, i.e., invasion that requires the population to evolve and adapt to the new environment. Here, the term ``environment" is understood broadly:   among others, an application  we have in mind concerns the emergence of zoonotic diseases  which represent, according to WHO,  around 60$\%$ of the human diseases. 
Specifically, we aim to describe conditions under which an introduced population $u_0$ (e.g. of pathogens), originating from a reservoir host and introduced into a new host, successfully adapts, causing  {\it spillover}. As for influenza A, we  refer, among others, to  \cite{Par-et-al-15, Web-Web-01}, while the works \cite{Lat-et-al-20, Lau-et-al-05} are concerned with 2003 and 2019 coronavirus outbreaks.

In the standard replicator-mutator model $
\partial_t u = \partial_{\theta \theta} u + r(\theta) \, u - u \, \rho_u(t)$, 
 the persistence of the introduced population is independent of $u_0$—in particular, it does not depend on the initial population size or its phenotypic characteristics. Indeed, survival of the introduced population depends exclusively on the sign of the principal eigenvalue of the operator $\phi \mapsto \phi'' +   r(\theta)\phi$ \cite{AlfVer18}. This phenomenon of independence from initial conditions arises due to the well-known mathematical property, often considered biologically unrealistic, of infinite speed of propagation of the solution's support. Mathematically, this property follows directly from the strong maximum principle \cite{friedman-parabolic}, which instantaneously generates mutants possessing the entire spectrum of possible phenotypes—including those well-adapted to the new host (i.e., phenotypes close to the  fitness  optimum). Combined with the ``atto-fox problem" \cite{Mol91}, this effect allows infinitesimally small subpopulations (with densities $\ll 1$) possessing advantageous phenotypes to grow whenever the fitness function $r$ on the new host permits.

In this work, we introduce the bump function $f$ precisely to prevent growth at very low population densities. This mechanism acts as a ``pseudo-Allee effect": the function $f$ does not represent a biologically driven Allee effect (such as cooperation among individuals) but rather serves to counteract the unrealistic instantaneous appearance of well-adapted phenotypes in extremely small quantities.

We will demonstrate how this equation, equipped with the additional term $f$, differs in its dependence on initial conditions from the classical replicator-mutator model. Moreover, we will illustrate how its properties also differ significantly from classical bistable equations, which do not include nonlocal interactions.

\paragraph{The case $f\equiv 0$: replicator-mutator equation.}
In the absence of the Allee effect, the ``replicator-mutator" equation,  which originates from the works \cite{Fle-79, Kim-65, Lan-75}, 
\begin{equation} \label{eq:replicator-mutator}
    \partial_t u = \partial_{\theta \theta} u + r(\theta) \, u - u \, \rho_u(t) \quad t > 0, \, \theta \in \mathbb{R},
\end{equation}
has been widely studied in recent years, particularly when the fitness term $r(\theta)$ is quadratic \cite{ AlfCar17, AlfVer18, HamLav20,MarRoq16}, $r(\theta) = \rmax - \alpha ^2 \theta^2$ ($\alpha >0$ measuring the strength of selection),
which corresponds to the assumptions of Fisher’s geometric model. These works have yielded analytical solutions for $u(t,\theta)$, especially simple when the initial datum is Gaussian,  see also \cite{Bik-14}. The stationary states $p$ of the equation satisfy
$ \partial_{\theta \theta} p +   r(\theta) p= \lambda\, p$, with $\lambda=\rho_p =\int_\R p(\theta) d\theta $.
As noted in \cite{AlfVer18}, these stationary states are necessarily eigenfunctions of the Schrödinger operator $\phi \ \mapsto \ \partial_{\theta \theta}  \phi + r(\theta)\phi$, with $\lambda$ as the associated eigenvalue. Since the potential $r(\theta)$ is confining due to its quadratic decay, we are assured of the existence and uniqueness of the principal eigenvalue $\lambda$ and the principal eigenfunction $p > 0$, with the uniqueness of the latter understood up to a multiplicative constant. With $r(\theta) = \rmax - \alpha^2 \theta^2$, the principal eigenfunction is Gaussian, centered at the optimum $0$:
\begin{equation} \label{eq:gaussian_p}
	\ds p(\theta) = C\, \frac{\alpha^{1/2}}{\sqrt{2 \, \pi}} e^{-\alpha \frac{\theta^2}{2}},
\end{equation}
with $C>0$ an arbitrary constant, and 
\begin{equation} \label{eq:eigenvalue_rho}
	\lambda = \rmax -  \alpha.
\end{equation}
For this pair $(\lambda, p)$ to be a valid stationary solution of \eqref{eq:replicator-mutator}, we must ensure that condition $\lambda=\rho_p$ is satisfied. For this, it is necessary that $\lambda > 0$, i.e., $\rmax > \alpha $, and then we must take $C = \lambda$.

Let us focus on the maximum value taken by the equilibrium solution:
\begin{equation} \label{eq:pinfty}
	\|p\|_\infty = (\rmax -  \alpha ) \frac{\alpha^{1/2}}{\sqrt{2 \, \pi}}.
\end{equation}
We note that a very low selection $\alpha \to 0$ leads to $\|p\|_\infty\to 0$, as well as a high selection $\alpha \to \rmax$ leads to $\|p\|_\infty\to 0$ (if $\alpha \ge \rmax$, the stationary solution vanishes). In fact, the expression \eqref{eq:pinfty} shows that $\|p\|_\infty$ reaches its maximum when 
$\alpha=\alpha_{max}:=\rmax/3.$
We thus observe a non-monotonic dependence of $\|p\|_\infty$ with respect to $\alpha$, and the existence of an optimal positive selection coefficient. This suggests that in the presence of an Allee effect, i.e., with the additional term $f$ in the equation \eqref{eq:main}, an increase in the selection coefficient could, in certain situations, lead to the solution crossing the critical threshold $\varepsilon$ (or $2 \, \varepsilon$) and to the persistence of a solution that would have converged to $0$ with a lower selection coefficient. 

\paragraph{The standard bistable problem \cite{DuMat10,Zla06}.} Let us now establish a connection between \eqref{eq:main} and the most classical bistable problems, with a local competition term and a constant coefficient $r$:
\begin{equation} \label{eq:bistable}
    \partial_t u = \partial_{\theta \theta} u + r\, u - u^2 - f(u), \quad t > 0, \, \theta \in \mathbb{R}.
\end{equation}
To this end, we make, solely for this digression, the assumption that  $k(s):=r\, s - s^2 -f(s)$  vanishes at $0$, at $s_1\in(\varepsilon,r)$, at $r$, is negative on $(0,s_1)$, positive on $(s_1,r)$, negative on $(r,+\infty)$  and has positive integral over $(0,r)$. It is easily checked that these assumptions can be achieved with well-chosen bump functions $f$ as in Fig.~\ref{fig:schematic_function_f}.
Then, the behavior of the Cauchy problem associated with equation~\eqref{eq:bistable} is well-known~\cite{DuMat10, Pol11, Mur-Zho-13}. Recall a few results from the literature: for any compactly supported initial data $u_0: \mathbb{R} \to [0,r]$, the solution of \eqref{eq:bistable} starting from $u_0$ can exhibit only three possible long-term behaviors as $t \to +\infty$:
\begin{itemize}
    \item Extinction: $u(t, \cdot) \to 0$ uniformly in $\mathbb{R}$.
    \item Invasion: $u(t, \cdot) \to r$ locally uniformly in $\mathbb{R}$.
    \item Convergence to a ground state: $u(t, \cdot) \to p(\cdot + x_0)$ uniformly in $\mathbb{R}$ for some $x_0 \in \mathbb{R}$, where $p: \mathbb{R} \to (0,r)$ is the unique positive stationary solution of \eqref{eq:bistable} converging to $0$ at infinity and such that $\max_{\mathbb{R}} p = p(0)$.
\end{itemize}
In order to describe threshold phenomena between extinction and invasion, one considers, as in \cite{DuMat10,Pol11}, a family $(u^\sigma_0)_{\sigma \ge 0}$ of  initial conditions in $L^\infty(\mathbb{R})$ which satisfies:
\begin{equation} 
\left\{
\begin{aligned}
    &\text{1. For each } \sigma \ge 0, \, u^\sigma_0 \text{ is compactly supported;} \\
    &\text{2. } \exists M>0 \hbox{ such that for all }\sigma\ge 0, \ \|u^\sigma_0\|_{L^\infty}\le M; \\
    &\text{3. } \sigma \mapsto u^\sigma_0 \text{ is continuous from } \mathbb{R} \text{ to } L^1(\mathbb{R}); \\
    &\text{4. } u^0_0 = 0, \ u^\sigma_0 \ge 0 \text{ and the family is increasing: if } \sigma_1 < \sigma_2, \text{ then } u^{\sigma_1}_0 \le u^{\sigma_2}_0 \text{ and } u^{\sigma_1}_0 \not\equiv u^{\sigma_2}_0 \text{ a.e.}; \\
    &\text{5. } \lim_{\sigma \to +\infty}\|u^\sigma_0\|_{L^1}= + \infty.
\end{aligned}
\right.
\label{eq:family_initial_conditions}
\end{equation}
Then, the results in \cite{DuMat10} (see also \cite{Zla06} for initial conditions which are indicator functions of intervals) show that
there is a unique threshold $\sigma^* \in (0, +\infty]$ such that, for the solution of the Cauchy problem~\eqref{eq:bistable} starting from $u^\sigma_0$, 
extinction occurs if $0 \le \sigma < \sigma^*$, invasion occurs if $\sigma > \sigma^*$, and convergence to a ground state occurs if $\sigma = \sigma^*$.

These so-called {\it sharp threshold} phenomena have  received a lot of attention. Among others, we may mention the works  
\cite{Pol11} considering possibly nonautonomous reaction terms,   \cite{Mat-Pol-16} allowing initial data with {\it tails}, \cite{Mur-Zho-13,Mur-Zho-17} relying on energy estimates in a $L^2$ framework, \cite{Alf-Duc-Fay-20}  providing a first quantitative estimate of the threshold value. 
These  results are typically  obtained for monotone
families of initial conditions, but the threshold phenomenon can also be related to the issue
of fragmentation of the initial data. With that respect, we may refer to  \cite{AlfHamRoq24, Gar-Roq-Ham-12, Nad-preprint-23}.

\paragraph{New phenomena.} For \eqref{eq:main}, the presence of the nonlocal competition term results in dynamics that are fundamentally different from the standard bistable case. Notably, due to the integral term  (and, possibly, non constant fitness), there is no constant positive stationary solution, which makes invasion impossible. The ``threshold solutions", observed in the local  case when $\sigma = \sigma^*$ \cite{DuMat10, Pol11}, appear to play a crucial role in this context.

 Furthermore,  our analysis and numerical results indicate the possible existence of two thresholds, $\sigma_*<\sigma^*$, with convergence to ground states occurring when $\sigma$ lies within the interval $[\sigma_*, \sigma^*]$, and extinction taking place when $\sigma$ is outside this interval.

\paragraph{Organization of  the paper.} In Section~\ref{s:results}, we state the precise assumptions and present the main results. Section~\ref{s:numerical} is devoted to numerical simulations, which illustrate the richness of the possible outcomes, in particular the potential existence of two sharp thresholds. The well-posedness of the Cauchy problem is established in Section~\ref{s:cauchy}. Finally, Section~\ref{s:pers-vs-ext} is dedicated to the proofs of the various results concerning extinction and persistence.

\section{Assumptions and main results}\label{s:results}

Throughout the paper, we make the following assumptions on the  function $r$  describing the fitness landscape of the pathogen in terms of its phenotype $\theta \in \R$.

\begin{ass}[Fitness function]\label{ass:r}
  We suppose $r\in C^1(\R)$ is bounded from above, with $\rmax := \sup_{\theta\in\R} r(\theta)>0$, and decays at most quadratically, i.e. there exists $C>0$ such that $r(\theta)\geq -C(1+\theta^2)$. We also suppose that, for every $\mu>0$, the function $\theta\mapsto|r'(\theta)|e^{-\mu\theta^2}$ is bounded.
\end{ass}
As for the function $f$, we assume the following.
\begin{ass}[Allee effect]\label{ass:f}
We suppose $f\in  C^1(\R)$,  $f(0)=0$, $f\ge 0$ and  $f'(0) > \rmax$. We also assume that $f$ and $f'$ are bounded, and that $f''(0)$ exists. 
\end{ass}
These assumptions on $f$ imply that
\begin{equation}\label{eq:def_epsilon}
    \exists \ \varepsilon>0 \hbox{ such that }\rmax s - f(s)<0 \hbox{ for all }s\in (0,\varepsilon],
\end{equation}
and that $f$ is Lipschitz continuous. We will note $C_{Lip}$ its Lipschitz constant.

Our first result concerns the well-posedness of the Cauchy problem associated with \eqref{eq:main}.

\begin{ass}[Initial datum]\label{ass:u0}
We suppose $u_0 \in C^{0,\beta}(\R)$  for some $\beta\in(0,1)$, and $u_0 \geq 0$. We also suppose  $u_0(\theta) \leq C_0e^{-\mu_0\theta^2}$ for some $C_0,\mu_0>0$.
 \end{ass}

\begin{thm}[Well-posedness]\label{th:well-pos}
Under Assumptions~\ref{ass:r}, \ref{ass:f}, and~\ref{ass:u0}, 
there exists a unique global solution 
\[
u \in C^0([0,+\infty)\times\R)\cap C^{1;2}_{t;\theta}((0,+\infty)\times\R)
\]
to \eqref{eq:main}  starting from $u_0$.  
Moreover, this solution satisfies:
\begin{enumerate}[label=(\roman*)]
    \item $u(t,\theta) \ge 0$ for all $(t,\theta)\in[0,+\infty)\times\R$, and if $u_0\not\equiv 0$, then $u(t,\theta)>0$ for all $t>0$ and $\theta\in\R$;
    \item for every $T>0$, there exist constants $C_T,\mu_T>0$ such that
    \[
    0 \le u(t,\theta) \le C_T e^{-\mu_T\theta^2}, \qquad (t,\theta)\in[0,T]\times\R;
    \]
    \item the quantity $\ds\rho_u(t)=\int_\R u(t,\theta)\,d\theta$ is globally bounded in $(0,+\infty)$.
\end{enumerate}
\end{thm}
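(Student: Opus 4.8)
The plan is to obtain existence through a Duhamel/fixed-point argument, and then to read off the qualitative properties~(i)--(iii) from comparison arguments and an integrated (ODE) inequality. Writing $G_t(\theta)=(4\pi t)^{-1/2}e^{-\theta^2/4t}$ for the heat kernel, a solution of \eqref{eq:main} must satisfy
\[
u(t)=G_t* u_0+\int_0^t G_{t-s}*\bigl(r\,u(s)-u(s)\rho_{u}(s)-f(u(s))\bigr)\,ds,
\]
and I would run a contraction argument for this map on a short interval $[0,\tau]$ in a Gaussian-weighted space such as $X_\tau=\{u\in C([0,\tau]\times\R):\ \|u\|:=\sup_{t,\theta}e^{\mu(t)\theta^2}|u(t,\theta)|<\infty\}$, with $\mu(t)=\mu_0/(1+4\mu_0 t)$ the largest weight compatible with the heat flow. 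The weight is exactly what makes $\rho_{u}(t)=\int_\R u(t,\cdot)$ well defined with $|\rho_u(t)|\le C\|u\|$, so that $u\rho_u$ is a quadratic term and, by Assumption~\ref{ass:f}, $f(u)$ a Lipschitz term, both controllable in the weighted norm for $\tau$ small. The only genuinely delicate term is $r\,u$, since by Assumption~\ref{ass:r} $r$ is merely bounded above and may decay quadratically; one uses $|r(\theta)|\le C(1+\theta^2)$ so that $r\,u$ still has Gaussian decay up to a polynomial factor that the convolution absorbs (at the cost of shrinking $\tau$), or, alternatively, one truncates $r$ to $\max(r,-n)$, solves the truncated problem, and passes to the limit using the uniform estimates below. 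This yields a unique mild solution on $[0,\tau]$, and interior parabolic (Schauder) estimates — legitimate because $r,f\in C^1$ and $t\mapsto\rho_u(t)$ is continuous — upgrade it to a classical solution in $C^{1;2}_{t;\theta}$.

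Given a solution on $[0,\tau]$, I would first prove $u\ge0$ by the maximum principle. On the set $\{u<0\}$ one has $0\le f(u)\le C_{Lip}|u|$; after the change of unknown $u=e^{\Lambda t}w$ with $\Lambda$ large enough (using that $r$ is bounded above and $\rho_u$ is bounded on $[0,\tau]$, regardless of its sign), $w$ turns out to be a supersolution of the heat equation wherever it is negative, and since the Gaussian weight forces $w(t,\theta)\to0$ as $|\theta|\to\infty$ uniformly in $t\in[0,\tau]$, a standard perturbed minimum-principle argument (subtracting $\delta t$, then letting $\delta\to0$) gives $w\ge0$, i.e. $u\ge0$. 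Then $\rho_u\ge0$, and as $f\ge0$ the equation yields the one-sided inequality $\partial_t u\le\partial_{\theta\theta}u+\rmax\,u$; comparing (in the class of subquadratically growing functions) with the explicit solution $e^{\rmax t}\,G_t*u_0$ of the associated linear equation, and using $u_0\le C_0e^{-\mu_0\theta^2}$ together with $G_t*e^{-\mu_0(\cdot)^2}=(1+4\mu_0 t)^{-1/2}e^{-\mu_0\theta^2/(1+4\mu_0 t)}$, gives
\[
0\le u(t,\theta)\le C_0\,e^{\rmax t}\,e^{-\frac{\mu_0}{1+4\mu_0 t}\,\theta^2},
\]
which is precisely (ii), with $C_T=C_0e^{\rmax T}$ and $\mu_T=\mu_0/(1+4\mu_0 T)$. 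The same one-sided inequality rules out finite-time blow-up, so the local solution extends to a global one; and the strong maximum principle — applicable since, once $u\ge0$, the coefficient $r(\theta)-\rho_u(t)-f(u)/u$ is locally bounded, with $f(u)/u\in[0,C_{Lip}]$ — gives $u>0$ for $t>0$ whenever $u_0\not\equiv0$, i.e. (i).

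For (iii) I would integrate \eqref{eq:main} in $\theta$ over $\R$: the Gaussian bound (ii), together with the decay of $\partial_\theta u$ coming from interior estimates, makes $\int_\R\partial_{\theta\theta}u=0$ and justifies differentiating under the integral, so that $\rho_u'(t)=\int_\R r\,u-\rho_u(t)^2-\int_\R f(u)\le\rmax\,\rho_u(t)-\rho_u(t)^2$, a logistic differential inequality forcing $\rho_u(t)\le\max\{\rho_u(0),\rmax\}$ for all $t>0$, with $\rho_u(0)=\int_\R u_0\le C_0\sqrt{\pi/\mu_0}<\infty$. Finally, uniqueness follows by a Gronwall estimate on the difference $w=u-v$ of two solutions: writing $u\rho_u-v\rho_v=u\rho_w+\rho_v w$ we get $\partial_t w=\partial_{\theta\theta}w+rw-u\rho_w-\rho_v w-(f(u)-f(v))$, and since on $[0,T]$ all of $\|u\|_\infty$, $\|v\|_\infty$, $\rho_u$, $\rho_v$ are bounded by the previous steps, a weighted $L^\infty$ (or weighted $L^2$) energy estimate closes the argument. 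I expect the main obstacle to be exactly the handling of the unbounded-below coefficient $r$: making the weighted fixed-point space and the associated comparison and uniqueness arguments fully rigorous — equivalently, controlling the truncation limit — is where the real work lies, after which (i)--(iii) follow quite directly from sign conditions and the integrated logistic inequality.
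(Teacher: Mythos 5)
Your proposal is correct and follows essentially the same strategy as the paper: a Duhamel/fixed-point argument in a Gaussian-weighted space (with a truncation to tame the unbounded-below $r$, which the paper implements by compactly supported cutoffs $\varphi_n$ of the fitness term followed by an Arzel\`a--Ascoli limit), parabolic comparison and the strong maximum principle for positivity and the Gaussian envelope, the integrated logistic inequality $\rho_u'\le\rmax\rho_u-\rho_u^2$ for global boundedness of $\rho_u$, and a short-time weighted contraction on $\rho_{u-v}$ for uniqueness. The only organizational differences are minor: the paper carries a H\"older seminorm $|\cdot|_\beta$ in the fixed-point space (needed to land in a class where classical parabolic regularity applies directly, since $u_0$ is only $C^{0,\beta}$), obtains its a priori Gaussian envelope by applying the comparison lemma to $u^2$ rather than first proving positivity, and derives the decay of $\partial_\theta u$, $\partial_{\theta\theta}u$, $\partial_t u$ via a near/far splitting of the Duhamel integrals to rigorously justify $\int_\R\partial_{\theta\theta}u=0$ in the $\rho_u$ ODE.
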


We now turn to the long time behavior of the solution $u^\sigma=u^\sigma(t,\theta)$ to \eqref{eq:main} starting from $u_0^\sigma$, the family  of initial conditions $(u_0^\sigma)_{\sigma\geq 0}$ satisfying \eqref{eq:family_initial_conditions}.  To match with Assumption \ref{ass:u0} we also assume that all the $u_0^\sigma$'s belong to $C^{0,\beta}(\R)$ for some $\beta\in(0,1)$. 

First, we have the following (expected) result which asserts extinction for \lq\lq small'' initial data.

\begin{prop}[Extinction for small data]\label{prop:sigma-pt} Under Assumptions~\ref{ass:r} and~\ref{ass:f}, there is $\sigma_*>0$ such that, for any $0<\sigma <\sigma_*$, $u^\sigma \to 0$ as $t \to +\infty$ uniformly in $\mathbb{R}$.
    \end{prop}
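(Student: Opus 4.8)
The plan is to exploit the ``pseudo-Allee'' inequality \eqref{eq:def_epsilon}, namely $\rmax s - f(s) < 0$ on $(0,\varepsilon]$, to show that once the solution is pointwise below $\varepsilon$ it is a subsolution of a pure (linear or sublinear) heat-type equation that drives it to $0$. The key is to find $\sigma_*>0$ small enough that, starting from $u_0^\sigma$ with $\sigma<\sigma_*$, the solution $u^\sigma$ never exceeds $\varepsilon$ and in fact decays uniformly. First I would record the crude a priori bound: since $r\le \rmax$, $f\ge 0$ and $\rho_{u^\sigma}\ge 0$, the solution satisfies $\partial_t u^\sigma \le \partial_{\theta\theta} u^\sigma + \rmax\, u^\sigma - f(u^\sigma)$, so $u^\sigma$ is a subsolution of the spatially homogeneous ODE-flow $\dot v = \rmax v - f(v)$ coupled with diffusion. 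By \eqref{eq:def_epsilon} the ODE $\dot v = \rmax v - f(v)$ has $0$ as a stable equilibrium attracting all data in $[0,\varepsilon]$, and $v\equiv \varepsilon$ is a supersolution of the PDE.

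Next I would use the decay hypothesis in \eqref{eq:family_initial_conditions}: as $\sigma\to 0^+$, continuity in $L^1$ and the uniform $L^\infty$ bound $M$ give $\|u_0^\sigma\|_{L^1}\to 0$ and $\|u_0^\sigma\|_{L^\infty}\le M$. Convolving with the heat kernel, for the linear comparison equation $\partial_t w = \partial_{\theta\theta} w + \rmax w$ started from $u_0^\sigma$ one has, for any fixed $t_0>0$, $\|w(t_0,\cdot)\|_{L^\infty} \le \frac{e^{\rmax t_0}}{\sqrt{4\pi t_0}}\,\|u_0^\sigma\|_{L^1}$, which can be made $<\varepsilon$ by choosing $\sigma$ small. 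Since $u^\sigma\le w$ on $[0,t_0]$ by comparison (using $f\ge 0$), we get $u^\sigma(t_0,\cdot)\le w(t_0,\cdot) < \varepsilon$ on $\R$. Actually one must be slightly careful: for very small $t$, $u^\sigma$ could a priori be as large as $M>\varepsilon$ near the support of $u_0^\sigma$; but on the short interval $[0,t_0]$ we only use $u^\sigma\le w$ and the bound on $w(t_0,\cdot)$, so this causes no problem --- the point is simply that at time $t_0$ the solution has dropped below $\varepsilon$ everywhere.

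From time $t_0$ on, I would restart the equation with initial datum $u^\sigma(t_0,\cdot)\le\varepsilon$ and show this bound propagates. On the region where $u^\sigma\le\varepsilon$, the reaction term obeys $r(\theta)u^\sigma - f(u^\sigma)\le \rmax u^\sigma - f(u^\sigma)\le 0$ by \eqref{eq:def_epsilon}, hence $\partial_t u^\sigma \le \partial_{\theta\theta}u^\sigma - u^\sigma\rho_{u^\sigma} \le \partial_{\theta\theta}u^\sigma$, so the constant $\varepsilon$ is a supersolution and $u^\sigma(t,\cdot)\le\varepsilon$ for all $t\ge t_0$ by the comparison principle (valid here thanks to Theorem~\ref{th:well-pos} and the Gaussian decay bound (ii), which makes comparison with bounded super/subsolutions legitimate). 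For the uniform convergence to $0$, with $u^\sigma\le\varepsilon$ for $t\ge t_0$ I would use the strict negativity: set $g(s):=f(s)-\rmax s>0$ on $(0,\varepsilon]$; since $f'(0)>\rmax$ there is $\delta>0$ with $g(s)\ge \delta s$ near $0$, and more crudely $\rmax s - f(s)\le -\delta s$ can be arranged on all of $(0,\varepsilon]$ after possibly shrinking $\varepsilon$ (using $f\in C^1$, $f(0)=0$, $f'(0)>\rmax$). Then $\partial_t u^\sigma \le \partial_{\theta\theta}u^\sigma - \delta u^\sigma$, whose solution from a bounded datum decays like $e^{-\delta t}$ uniformly; comparison gives $\|u^\sigma(t,\cdot)\|_\infty \le \varepsilon e^{-\delta(t-t_0)}\to 0$.

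\textbf{Main obstacle.} The delicate point is the justification of the comparison principle on the unbounded domain $\R$: one needs that the relevant super/subsolutions lie in a class (e.g. Gaussian-type decay or merely boundedness) in which the maximum principle applies, and Theorem~\ref{th:well-pos}(ii) is exactly what licenses this on each slab $[0,T]\times\R$. A secondary technical nuisance is the interplay of the two time scales: one cannot hope that $u^\sigma\le\varepsilon$ from $t=0$ (the datum may exceed $\varepsilon$ pointwise), so the argument genuinely has the two-stage structure ``decay below $\varepsilon$ by time $t_0$ via the $L^1\to L^\infty$ heat smoothing, then stay below $\varepsilon$ and decay exponentially thereafter,'' and $\sigma_*$ is chosen precisely so that stage one succeeds. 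Everything else is a routine application of the comparison principle together with the sign condition \eqref{eq:def_epsilon}.
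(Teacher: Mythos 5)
Your proposal is correct and follows essentially the same route as the paper: use $r\le\rmax$, $f\ge0$, $\rho_{u^\sigma}\ge0$ to bound $u^\sigma$ by the linear heat flow, invoke the $L^1\to L^\infty$ smoothing estimate $\|u^\sigma(t_0,\cdot)\|_\infty\le \frac{e^{\rmax t_0}}{\sqrt{4\pi t_0}}\|u_0^\sigma\|_{L^1}$ at $t_0=1$ and choose $\sigma_*$ so this is below $\varepsilon$, then show the sub-$\varepsilon$ state relaxes to $0$. The only (cosmetic) difference is in the final step: the paper simply compares $u^\sigma(1+t,\cdot)$ with the spatially homogeneous solution $U$ of $\dot U=\rmax U-f(U)$, $U(0)=\varepsilon$, whose decay to $0$ is immediate from the sign condition \eqref{eq:def_epsilon}, and which automatically stays $\le\varepsilon$; you instead first propagate $u^\sigma\le\varepsilon$ via a supersolution argument and then shrink $\varepsilon$ to obtain $\rmax s-f(s)\le-\delta s$ and an explicit exponential rate. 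Your version is sound (and yields a quantitative rate, which the paper does not need), but it does require replacing $\varepsilon$ by a smaller $\varepsilon''$ and then going back to shrink $\sigma_*$ accordingly, and the intermediate propagation step is made redundant by the paper's direct ODE comparison, which is the cleaner way to close the argument.
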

    
    More striking is the fact that extinction may also occur for \lq\lq large'' initial data, which is in sharp contrast with the standard bistable problem, see Section \ref{s:intro}.

\begin{thm}[Extinction for large data]\label{th:sigma-gd} Assume further that $r$ is bounded from below. Then there is $\sigma^*\geq \sigma_*$ such that, for any $\sigma>\sigma^*$, $u^\sigma \to 0$ as $t \to +\infty$ uniformly in $\mathbb{R}$.
\end{thm}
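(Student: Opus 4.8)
The plan is to exploit the nonlocal competition term $-u\,\rho_u(t)$: when $\sigma$ is large, $u_0^\sigma$ has large $L^1$ norm, hence $\rho_{u^\sigma}(0)$ is large, and one would like this to force a fast decay. The key is that, since $r$ is now bounded ($-C\le m\le r(\theta)\le\rmax$), the mass $\rho(t):=\rho_{u^\sigma}(t)$ solves a closed \emph{differential inequality}. Integrating \eqref{eq:main} over $\theta\in\R$ (justified by the Gaussian decay in Theorem~\ref{th:well-pos}(ii), so that $\int \partial_{\theta\theta}u\,d\theta=0$) gives
\begin{equation}\label{eq:mass-ineq}
  \rho'(t)=\int_\R r(\theta)\,u\,d\theta-\rho(t)^2-\int_\R f(u)\,d\theta
  \le \rmax\,\rho(t)-\rho(t)^2 .
\end{equation}
Thus $\rho$ is a subsolution of the logistic ODE $y'=\rmax y-y^2$, so $\limsup_{t\to\infty}\rho(t)\le\rmax$; more importantly, if $\rho(0)$ is very large, $\rho$ decays fast initially. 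This alone does not give extinction (the logistic ODE stabilises at $\rmax>0$), so the real mechanism must be different: once the mass has been large enough, the pointwise values $u(t,\theta)$ are driven \emph{below} $\varepsilon$ uniformly, and then the Allee term takes over and pushes $u$ to $0$. So the proof splits into two phases.

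\emph{Phase 1: from large mass to small pointwise values.} I would first use the large initial mass, together with \eqref{eq:mass-ineq}, to produce a time $t_1>0$ and a uniform bound $\|u^\sigma(t_1,\cdot)\|_\infty\le\varepsilon$. The mechanism is that a very large $\rho(0)$ makes $\rho(t)$ drop quickly through the barrier $\rmax$; meanwhile, from the equation, $u$ satisfies $\partial_t u\le \partial_{\theta\theta}u+(\rmax-\rho(t))u$, a linear inequality with a coefficient that becomes (and stays) negative as soon as $\rho(t)>\rmax$. Comparing $u^\sigma$ with the solution $\bar v$ of the heat equation with the time-dependent zeroth-order term $(\rmax-\rho(t))$, i.e. $\bar v(t,\theta)=e^{\int_0^t(\rmax-\rho(s))\,ds}\,(e^{t\Delta}u_0^\sigma)(\theta)$, one gets $\|u^\sigma(t,\cdot)\|_\infty\le e^{\int_0^t(\rmax-\rho(s))\,ds}\|u_0^\sigma\|_\infty$, and the exponent $\to-\infty$ provided $\int_0^\infty(\rho(s)-\rmax)\,ds=+\infty$. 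The delicate point is to get a \emph{quantitative} lower bound on how long $\rho$ stays above $\rmax$ in terms of $\rho(0)$: roughly, from \eqref{eq:mass-ineq} one expects $\rho(t)\ge\rmax$ on an interval whose length tends to $0$ but whose contribution $\int_0^{t_1}(\rho(s)-\rmax)\,ds$ can be made as large as one wishes by taking $\rho(0)$ large. A cleaner route may be to keep a bit of the $-\int f(u)$ term: if $u$ is large somewhere, $f(u)\ge c>0$ there, so $\rho'\le \rmax\rho-\rho^2-c\,|\{u\ge 2\varepsilon\}|$; but controlling the measure of the superlevel set is itself awkward, so I expect the clean logistic comparison above, combined with a careful quantification, is the way to go.

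\emph{Phase 2: from small data to extinction.} Once $\|u^\sigma(t_1,\cdot)\|_\infty\le\varepsilon$, we have entered exactly the regime of Proposition~\ref{prop:sigma-pt}: by \eqref{eq:def_epsilon}, on the range $[0,\varepsilon]$ one has $r(\theta)u-f(u)\le\rmax u-f(u)<0$, so $\partial_t u\le\partial_{\theta\theta}u+(\rmax u-f(u))-u\rho_u\le\partial_{\theta\theta}u+(\rmax u-f(u))$, and because $\|u(t_1,\cdot)\|_\infty\le\varepsilon$ is preserved (comparison with the spatially homogeneous subsolution staying in $[0,\varepsilon]$, which is itself a supersolution of the ODE $y'=\rmax y-f(y)\le 0$), the solution is trapped in $[0,\varepsilon]$ forever and, by the same ODE comparison, decays: $\|u^\sigma(t,\cdot)\|_\infty\le Y(t-t_1)$ where $Y'=\rmax Y-f(Y)$, $Y(0)=\varepsilon$, and $Y(s)\to 0$ since the ODE field is strictly negative on $(0,\varepsilon]$. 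This yields $u^\sigma(t,\cdot)\to 0$ uniformly, which is the claim. Finally, to obtain the threshold structure, I would set $\sigma^*:=\sup\{\sigma\ge\sigma_*: u^\sigma \text{ does not go extinct}\}$ (with $\sigma^*=\sigma_*$ if the set is empty), note it is finite by the above, and conclude that $u^\sigma\to0$ for $\sigma>\sigma^*$; monotonicity of the family and the comparison principle are not strictly needed for the statement as written (which only asserts existence of such a $\sigma^*$), but they make $\sigma^*$ a genuine threshold.

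\emph{Main obstacle.} The crux is Phase~1: turning "large initial mass" into "uniformly small at a later time" in a quantitative way. The subtlety is that the logistic barrier only gives $\rho\to\rmax$, not $\rho\to0$, so extinction cannot come from mass decay alone — it must come from the time-integrated overshoot $\int_0^\infty(\rho(s)-\rmax)_+\,ds$ being large, and one must show this integral diverges (or exceeds $\log(\|u_0^\sigma\|_\infty/\varepsilon)$) as $\sigma\to\infty$. I expect this to require a careful two-sided analysis of the scalar ODE \eqref{eq:mass-ineq} near its large-data regime, perhaps comparing $\rho$ from below as well (using $\int r u\ge m\,\rho$ and $\int f(u)\le C_{Lip}\,?$ — here the boundedness of $f$ is what makes $\int f(u)\le \|f\|_\infty\,|\mathrm{supp}|$ useless, so one instead uses $f\ge0$ to drop it and $\rho'\ge m\rho-\rho^2$ for the lower bound). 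Assembling a clean quantitative statement from these one-sided bounds is the technical heart of the argument.
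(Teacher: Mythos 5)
Your two-phase structure is exactly the paper's: first drive $u^\sigma$ uniformly below $\varepsilon$ using the large initial mass, then appeal to the ODE $Y'=\rmax Y-f(Y)$, $Y(0)=\varepsilon$, whose solution decays to $0$. Phase~2 is correct as you wrote it, and you also correctly identify the quantity that matters in Phase~1: the time-integrated overshoot $\int_0^{t_1}(\rho(s)-\rmax)\,ds$, since $\|u^\sigma(t,\cdot)\|_\infty\le e^{\rmax t-\int_0^t\rho(s)\,ds}\,M$. However, you leave Phase~1 as an open problem, and your sketch of how to close it contains a sign error that points you away from the right tool.

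You write that one should use ``$f\ge 0$ to drop it and get $\rho'\ge m\rho-\rho^2$''. That is backwards: dropping the nonpositive term $-\int_\R f(u)\,d\theta$ from $\rho'=\int r u-\rho^2-\int f(u)$ yields an \emph{upper} bound on $\rho'$, not a lower one. The correct move is to use the Lipschitz bound $f(u)\le C_{\mathrm{Lip}}\,u$, so $\int f(u)\le C_{\mathrm{Lip}}\,\rho$, which together with $r\ge r_{\min}$ gives the \emph{lower} Riccati inequality $\rho'\ge (r_{\min}-C_{\mathrm{Lip}})\rho-\rho^2\ge -K\rho-\rho^2$ with $K:=C_{\mathrm{Lip}}-\min(0,r_{\min})$. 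This is the step you were missing: it produces an \emph{explicit} lower barrier $\underline\rho(t)=K/\big((1+K/\rho_u^0)e^{Kt}-1\big)\le\rho_u(t)$, so one can replace $\rho$ by $\underline\rho$ in your exponential estimate and compute everything in closed form. The resulting comparison function $\bar u(t)=M\exp\big(\rmax t-\int_0^t\underline\rho(s)\,ds\big)=M e^{\rmax t}/\big(1+\frac{\rho_u^0}{K}(1-e^{-Kt})\big)$ has a unique minimum at the time $t^*$ where $\underline\rho(t^*)=\rmax$ (provided $\rho_u^0>\rmax$), and $\bar u(t^*)\to 0$ as $\rho_u^0=\|u_0^\sigma\|_{L^1}\to\infty$. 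Choosing $\sigma^*$ so that $\bar u(t^*)<\varepsilon$ for $\sigma>\sigma^*$ closes the gap. In short: the ``quantitative two-sided analysis'' you describe as the technical heart reduces to this one-line lower Riccati bound on $\rho$, and it requires the Lipschitz property of $f$ rather than discarding the $f$-term.
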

    
The persistence or not of the above result when $r$ is not assumed bounded from below seems subtle. For instance, for quadratic fitness functions $r(\theta)=\rmax-\alpha^2\theta^2$, numerical simulations suggest  that this should depend on the strength of selection $\alpha>0$, see Section \ref{s:numerical}. A rigorous proof of such a numerical conjecture is quite challenging. 

Obviously, the thresholds $\sigma_*$ and $\sigma^*$ in Proposition \ref{prop:sigma-pt} and  Theorem \ref{th:sigma-gd} are meaningful only if the survival of the population is possible,  under certain conditions, for values of $\sigma$ between $\sigma_*$ and $\sigma^*$. According to the numerical simulations presented in Section \ref{s:numerical}, the following four different scenarios may occur:
\begin{enumerate}
    \item [\textbf{(E)}] Extinction in all cases: For all $\sigma > 0$, $u^{\sigma} \to 0$ as $t \to +\infty$ uniformly in $\mathbb{R}$.
    \item [\textbf{(E$^*$)}] Extinction in all cases except at a threshold value: there exists $\sigma_*>0$ such that for all $\sigma \neq \sigma_*$, $u^{\sigma} \to 0$ as $t \to +\infty$ uniformly in $\mathbb{R}$. However, $u^{\sigma_*} \to p$ as $t \to +\infty$ uniformly in $\mathbb{R}$, where $p$ is a positive stationary state of~\eqref{eq:main}.
    \item [\textbf{(EPE)}] Persistence between two threshold values: there are $0<\sigma_* < \sigma^* < +\infty$ such that for all $\sigma < \sigma_*$ and all $\sigma > \sigma^*$, $u^{\sigma} \to 0$ as $t \to +\infty$ uniformly in $\mathbb{R}$. However, for all $\sigma_*\leq \sigma \leq \sigma^*$, $u^{\sigma} \to p^\sigma$ as $t \to +\infty$ uniformly in $\mathbb{R}$, where $p^\sigma$ is a positive stationary state of~\eqref{eq:main}.
    \item [\textbf{(EP)}]  
    Persistence after a threshold value: there is $\sigma_*>0$ such that for all $0<\sigma<\sigma_*$, $u^{\sigma} \to 0$ as $t \to +\infty$ uniformly in $\mathbb{R}$. However, for all $\sigma \geq \sigma_*$, $u^{\sigma} \to p^\sigma$ as $t \to +\infty$ uniformly in $\mathbb{R}$, where $p^\sigma$ is a positive stationary state of~\eqref{eq:main}.
\end{enumerate}
 From Theorem  \ref{th:sigma-gd}, the scenario \textbf{(EP)} is excluded by fitness functions that are bounded from below. However, it may occur for quadratic fitness functions that are very narrow in the sense that the strength of selection $\alpha$ is \lq\lq large''.

Thus, contrary to the standard local bistable case, where the ground states (non-constant stationary solutions) played a minor role (as they were only achieved for a single value of $\sigma$), here the ground states seem to play a major role, see scenario \textbf{(EPE)}. Demonstrating that $\sigma_*$ and $\sigma^*$ are two sharp thresholds, with behaviors of type \textbf{(E)}, \textbf{(E$^*$)}, \textbf{(EPE)}, appears to be out of reach for now. The problem is far more challenging than the classical problem \eqref{eq:bistable} due to the nonlocal term, which makes comparison arguments difficult. As revealed  by the proof  of Theorem~\ref{th:sigma-gd}, when $\rho_u$ increases, it tends to decrease $u$, but a decrease in $u$ causes a reduction in $\rho_u$. These types of interactions not only introduce technical difficulties but are also responsible for the existence of the second threshold $\sigma^*$.

\medskip

Despite such difficulties, we are able to exhibit situations where survival does occur. To do so, we need {\it ad hoc} families of Allee effect functions $f_\varepsilon$ and fitness functions $r_\varepsilon$ that are scaled properly via  the small parameter $0<\varepsilon\ll 1$.

Recall that a  function $h:\R \rightarrow \R$ is said to be radially nonincreasing if it is even and $\theta\mapsto h(\theta)$ is nonincreasing on $[0,+\infty)$.

 \begin{ass}[{\it Ad hoc} case scaled by the strength of selection]\label{ass:perturb}  For each $\varepsilon>0$,  
  we assume that $f_\varepsilon$ satisfies Assumption~\ref{ass:f}  and that $r_\varepsilon$ satisfies Assumption~\ref{ass:r}. Moreover, we assume that $\|f_\varepsilon'\|_{L^\infty}=O(1)$  and $\|f_\varepsilon\|_{L^\infty}= O(\varepsilon)$ as $\varepsilon \to 0$. 
 As for the fitness function, we require $r_\varepsilon$ to be a radially nonincreasing fitness function such that
\begin{equation}\label{r-takes-the-form}
r_\varepsilon(\theta)\begin{cases}=\rmax -\varepsilon^{2\gamma} \theta ^2, \quad &\forall \theta \in \left(-\frac{\sqrt{2\rmax }}{\varepsilon^\gamma},\frac{\sqrt{2\rmax }}{\varepsilon^\gamma}\right),\\
\geq \rmax -\varepsilon^{2\gamma} \theta ^2, &\forall \theta \notin \left(-\frac{\sqrt{2\rmax }}{\varepsilon^\gamma},\frac{\sqrt{2\rmax }}{\varepsilon^\gamma}\right),
\end{cases}
\end{equation}
for some $0<\gamma<1$. 
 \end{ass}
 
  \begin{rem}\label{rem:gamma=1} As will become transparent in the very end of the proof of Theorem \ref{th:survival}, in \eqref{r-takes-the-form} we may replace $\varepsilon ^{2\gamma}$ ($0<\gamma<1$) by $A^2\varepsilon ^2$ for $A>0$. Then, if $A>0$ is chosen sufficiently large, the result below (for $0<\varepsilon\ll 1$) remains valid. 
 \end{rem}

\begin{thm}[Survival may occur]\label{th:survival}  Let us consider a radially nonincreasing initial condition $u_0\in C^0_c(\R, [0,+\infty))$ with $0<\rho_{u_0}<\rmax$. Then there is $\varepsilon_0>0$ such that, for any $0<\varepsilon<\varepsilon_0$, the solution $u$ to \eqref{eq:main} --- with $f=f_\varepsilon$ and $r=r_\varepsilon$ as in Assumption \ref{ass:perturb}---  starting from $u_0$ persists, in the sense that there is $\rho^*>0$ such that, for all $t\geq 0$, $\rho_u(t)\geq \rho^*$.
\end{thm}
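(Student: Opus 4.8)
The plan is to test \eqref{eq:main} against the Gaussian $\phi_\varepsilon(\theta):=e^{-\varepsilon^{\gamma}\theta^{2}/2}$ and to control the resulting \emph{weighted mass} $J(t):=\int_{\R}u(t,\theta)\,\phi_\varepsilon(\theta)\,d\theta$. Recall that $\phi_\varepsilon$ is the principal eigenfunction of $\psi\mapsto\psi''+(\rmax-\varepsilon^{2\gamma}\theta^{2})\psi$, with eigenvalue $\lambda_\varepsilon:=\rmax-\varepsilon^{\gamma}\to\rmax>0$; since $r_\varepsilon\geq\rmax-\varepsilon^{2\gamma}\theta^{2}$ on all of $\R$ by \eqref{r-takes-the-form} and $\phi_\varepsilon>0$, we obtain the crucial pointwise inequality $\phi_\varepsilon''+r_\varepsilon\phi_\varepsilon\geq\lambda_\varepsilon\phi_\varepsilon$. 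First I would record three a priori facts, all using the finite‑time Gaussian bound of Theorem~\ref{th:well-pos}(ii) to justify the integrations: (i) integrating \eqref{eq:main} over $\R$ gives $\rho_u'=\int_{\R}r_\varepsilon u-\rho_u^{2}-\int_{\R}f_\varepsilon(u)\leq\rmax\,\rho_u-\rho_u^{2}$, hence $\rho_u(t)\leq\rmax$ for every $t\geq0$ (as $\rho_{u_0}<\rmax$); (ii) $u(t,\cdot)>0$ for $t>0$; (iii) since $u_0$ is even and radially nonincreasing, $r_\varepsilon$ is even and radially nonincreasing, and the competition coefficient $\rho_u(t)$ is spatially constant, a standard reflection argument shows $u(t,\cdot)$ remains even and radially nonincreasing for all $t\geq0$.

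\textbf{The weighted mass.}
Testing \eqref{eq:main} against $\phi_\varepsilon$ and integrating by parts twice yields
\begin{equation*}
J'(t)=\int_{\R}u\,(\phi_\varepsilon''+r_\varepsilon\phi_\varepsilon)\,d\theta-\rho_u(t)\,J(t)-\int_{\R}f_\varepsilon(u)\,\phi_\varepsilon\,d\theta\ \geq\ \big(\lambda_\varepsilon-\rho_u(t)\big)\,J(t)-\delta_\varepsilon,
\end{equation*}
with $\delta_\varepsilon:=\norm{f_\varepsilon}_{L^{\infty}}\int_{\R}\phi_\varepsilon=\sqrt{2\pi}\,\norm{f_\varepsilon}_{L^{\infty}}\,\varepsilon^{-\gamma/2}=O(\varepsilon^{1-\gamma/2})\to0$, where $\norm{f_\varepsilon}_{L^{\infty}}=O(\varepsilon)$ from Assumption~\ref{ass:perturb}. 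Together with the trivial bound $J(t)\leq\rho_u(t)$, this already contains the mechanism behind persistence: in the dangerous regime $\rho_u(t)\leq\lambda_\varepsilon/2$ the coefficient $\lambda_\varepsilon-\rho_u(t)$ is $\geq\lambda_\varepsilon/2\approx\rmax/2$, so $J$ is \emph{increasing} as soon as $J(t)\geq2\delta_\varepsilon/\lambda_\varepsilon$, while if $\rho_u(t)>\lambda_\varepsilon/2$ persistence holds at that instant with $\rho^{*}=\lambda_\varepsilon/2$. Since $J(0)=\int_{\R}u_0\phi_\varepsilon\to\rho_{u_0}>0$ while $2\delta_\varepsilon/\lambda_\varepsilon\to0$, for $\varepsilon$ small we have $J(0)\gg 2\delta_\varepsilon/\lambda_\varepsilon$, and the aim is to deduce, by a first‑exit‑time / continuation argument, a bound $\rho_u(t)\geq\rho^{*}>0$ for all $t\geq0$, with $\rho^{*}$ of order $\delta_\varepsilon/\lambda_\varepsilon$ (lowered below $\inf_{[0,T_0]}\rho_u>0$ for a fixed $T_0$ to absorb the initial layer).

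\textbf{The main obstacle.}
Closing this argument is the delicate point, for the reason emphasized in the Introduction about the proof of Theorem~\ref{th:sigma-gd}: a drop of $\rho_u$ corresponds to mass drifting to large $\abs{\theta}$, where $\phi_\varepsilon$ (and $r_\varepsilon$) are small, i.e. $J\ll\rho_u$, and then $\rho_u\geq J$ alone is too weak to forbid $\rho_u$ from draining. What is needed is a converse \emph{concentration estimate}: that, after a short transient, $\rho_u(t)\leq C\,J(t)$ for a fixed constant $C$ — equivalently $\int_{\R}(\rmax-r_\varepsilon)\,u\,d\theta=o(\rho_u)$, equivalently a two‑sided control of the second moment $\int_{\R}\theta^{2}u\,d\theta$ at the natural scale $\varepsilon^{-\gamma}$. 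Fed into the displayed inequality, this turns it into the logistic differential inequality $J'\geq\lambda_\varepsilon J-C J^{2}-\delta_\varepsilon$, whose smaller (subcritical) root is $\asymp\delta_\varepsilon/\lambda_\varepsilon>0$; hence $J(t)\geq\rho^{*}$ for all $t$ once $J(0)>\rho^{*}$, and then $\rho_u(t)\geq J(t)\geq\rho^{*}$, which is the claim.

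\textbf{Producing the concentration estimate.}
I would argue by comparison on the large bounded box $Q_\varepsilon:=(-R_\varepsilon,R_\varepsilon)$, $R_\varepsilon:=\sqrt{\pi/(2\varepsilon^{\gamma})}$, with Dirichlet boundary conditions, freezing the nonlocal term at a subcritical level $\bar\rho<\rmax$ (the complementary regime $\rho_u>\bar\rho$ being harmless). There, using $f_\varepsilon(u)\leq\norm{f_\varepsilon}_{L^{\infty}}$ and $r_\varepsilon\geq\rmax-\varepsilon^{2\gamma}\theta^{2}$, the solution dominates on $Q_\varepsilon$ the solution of the \emph{linear, local} problem $\partial_t\hat u=\partial_{\theta\theta}\hat u+(r_\varepsilon-\bar\rho)\hat u-\norm{f_\varepsilon}_{L^{\infty}}$ with zero Dirichlet data (legitimate for $\varepsilon$ small, since $\mathrm{supp}\,u_0\subset Q_\varepsilon$). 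The Dirichlet operator $\partial_{\theta\theta}+(r_\varepsilon-\bar\rho)$ on $Q_\varepsilon$ has principal eigenvalue $\approx\rmax-\bar\rho-O(\varepsilon^{\gamma})>0$ and a ground state comparable to $\phi_\varepsilon$, while the $\norm{f_\varepsilon}_{L^{\infty}}$–forcing is controlled by a resolvent/Green–function bound of size $O(\norm{f_\varepsilon}_{L^{\infty}}\,\varepsilon^{-\gamma})=O(\varepsilon^{1-\gamma})$ — which is precisely where the hypothesis $\gamma<1$ (equivalently the rescaling of Remark~\ref{rem:gamma=1} with $A$ large) is used. Combined with the radial monotonicity of $u(t,\cdot)$ and the finite‑time Gaussian bounds — which rule out the heavy spatial tails that would otherwise destroy the moment interpolation — this should pin $u(t,\cdot)$ to the scale of $\phi_\varepsilon$ on $Q_\varepsilon$ and give $\rho_u\leq C J$. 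I expect the genuinely hard part to be upgrading these local, finite‑time controls into a \emph{time‑uniform} concentration estimate that survives the nonlocal feedback ($\rho_u$ up $\Rightarrow$ $u$ down $\Rightarrow$ $\rho_u$ down) — a feedback with no counterpart in the local bistable problem \eqref{eq:bistable}, and the crux of the statement.
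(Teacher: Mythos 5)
Your strategy and the paper's diverge at the key step, and yours is not closed. You test against the Gaussian ground state $\phi_\varepsilon$ and correctly obtain $J'\geq(\lambda_\varepsilon-\rho_u)J-\delta_\varepsilon$, but you then correctly diagnose that $\rho_u\geq J$ runs the wrong way and that you need the converse concentration estimate $\rho_u\leq C\,J$ (time-uniformly) to convert this into a closed logistic inequality for $J$. That estimate is genuinely doubtful at the scales involved: $\phi_\varepsilon$ decays at $|\theta|\sim\varepsilon^{-\gamma/2}$, while the region where $r_\varepsilon>0$ (hence where $u$ can legitimately live, and what the paper's tail Lemma~\ref{lemma:u<M exp} controls) extends to $|\theta|\sim\varepsilon^{-\gamma}$. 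A radially nonincreasing profile of height $\sim1$ on $[0,\varepsilon^{-\gamma}]$ gives $\rho_u\sim\varepsilon^{-\gamma}$ but $J\sim\varepsilon^{-\gamma/2}$, so the best one could hope for is $\rho_u\leq C\varepsilon^{-\gamma/2}J$; this might still close the ODE comparison, but your sketched comparison on the Dirichlet box $Q_\varepsilon$ with frozen competition yields a \emph{lower} bound on $u$ (hence on $J$), not the upper bound on $\rho_u/J$ you need, and the ``time-uniform'' upgrade against the nonlocal feedback is, as you yourself flag, left open. In short, the one lemma your proof rests on is neither proved nor reduced to anything established.

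The paper sidesteps this difficulty entirely by working directly with $\rho_u$ rather than a weighted mass. It first shows (Lemma~\ref{lemma:radially nonincreasing bound}) that radial monotonicity is preserved and yields $u(t,\theta)\leq\max(\rho_{u_0},\rmax)/(2|\theta|)$; combined with $r_\varepsilon\leq-\rmax$ outside $(-\sqrt{2\rmax}/\alpha,\sqrt{2\rmax}/\alpha)$ and Lemma~\ref{lemma:u<M exp}, this gives a time-uniform exponential tail bound on $u$ beyond $|\theta|\sim\alpha^{-1}=\varepsilon^{-\gamma}$. It then estimates $\int r_\varepsilon u$ \emph{from below directly in terms of $\rho_u$} via Chebyshev's integral inequality (both $r_\varepsilon$ and $u(t,\cdot)$ are radially nonincreasing), obtaining $\int r_\varepsilon u\geq c_7\rho_u-c_8\alpha$, and bounds $\int f_\varepsilon(u)$ by $O(\|f_\varepsilon\|_\infty/\alpha)+O(\|f_\varepsilon'\|_\infty\,\alpha)=o(1)$ under the scaling $\alpha=\varepsilon^\gamma$. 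Plugging into $\rho_u'=\int r_\varepsilon u-\rho_u^2-\int f_\varepsilon(u)$ gives the clean logistic inequality $\rho_u'\geq c_7\rho_u-\rho_u^2-a$ with $a=o(1)$, and persistence follows by ODE comparison. This is the step your proposal would need to replace: the Chebyshev argument is what makes a weighted-vs.-unweighted mass comparison unnecessary, and it is not an optional shortcut but the crux of the paper's proof.
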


 Obviously, \eqref{r-takes-the-form} allows quadratic  but also bounded from below fitness functions. Therefore it follows from Proposition \ref{prop:sigma-pt}, Theorem \ref{th:sigma-gd} and Theorem \ref{th:survival} that there are situations where the  two thresholds  do exist. This sustains the possibility of scenarios \textbf{(E$^*$)} and \textbf{(EPE)}.

We now present a result for quadratic fitness functions which proves the existence of a range of selection intensity within which solutions may persist, no matter how large the initial datum is.
This contrasts with the case where the fitness is bounded from below, since initially large solutions will always go extinct, as shown by Theorem \ref{th:sigma-gd}.
This highlights the non-monotonic nature of the model: although a quadratic fitness function may take smaller values than one that is bounded from below, it can nevertheless rescue a population otherwise doomed to extinction.  Again, we emphasize that the reason for this is the subtle interplay between $u$ and $\rho_u$ mentioned above.

\begin{ass}[{\it Ad hoc} case scaled by the maximal fitness] \label{ass:large select}
    Suppose the fitness function is of the form $r(\theta) = \rmax - \alpha^2\theta^2$, with 
\begin{equation}
    1\leq r_{max}\leq \alpha^2 \leq 2 r_{max},
\end{equation}    
    and the Allee effect function satisfies Assumption \ref{ass:f} together with $\norm{f}_{L^\infty([0,+\infty))} \leq 2\rmax$, and $\norm{f'}_{L^\infty([0,+\infty))} \leq 2\rmax$.
    Suppose we also have a family $(u_0^\sigma)_{\sigma\geq 0}$ of radially nonincreasing initial data satisfying the conditions (\ref{eq:family_initial_conditions}), and such that, for all $\sigma\geq 1$, $\norm{u_0^\sigma}_{L^\infty(\R)} \leq 2\rmax$ and $u_0^\sigma(\theta) \geq \rmax$ for all $\theta\in[-\frac{1}{2}, \frac{1}{2}]$.
\end{ass}

Obviously the above assumption is scaled by $r_{max}$. We note that, for any $\rmax>0$, it is always possible to choose $f$ and $(u_0^\sigma)_{\sigma\geq0}$ satisfying the above hypotheses. For instance, we may set $f(u) = \rmax u e^{\frac{1}{2}-u}$ for all $u\geq0$, and $u_0^\sigma(\theta) = \frac{3}{2}\rmax\varphi(\frac{\theta}{\sigma})$ for all  $\sigma\geq 1$ and $\theta\in\R$, where $\varphi$ is a radially nonincreasing smooth function such that $\varphi_{|[-1,1]} \equiv 1$ and $\varphi_{|\R\setminus(-2,2)} \equiv 0$.

\begin{thm} [Survival for large selection] \label{th:survival large alpha}
    Let Assumption \ref{ass:large select} be satisfied. Then, if $\rmax$ is large enough, the corresponding solution $u^\sigma$ persists for all $\sigma\geq 1$.
\end{thm}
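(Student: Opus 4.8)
The plan is to run a comparison/energy argument that is robust to the nonlocal term, using the specific scaling $1\le\rmax\le\alpha^2\le 2\rmax$ and the lower bound $u_0^\sigma\ge\rmax$ on $[-\tfrac12,\tfrac12]$ to show that $\rho_{u^\sigma}$ can never collapse. First I would record the a priori facts from Theorem~\ref{th:well-pos}: $u^\sigma\ge 0$ is global, $\rho_{u^\sigma}$ is bounded, and—crucially—I would make the bound on $\rho_{u^\sigma}$ \emph{uniform in }$\sigma$. Since the $u_0^\sigma$ are uniformly bounded in $L^\infty$ by $2\rmax$ and radially nonincreasing, and $r\le\rmax$, a supersolution of the form $\min(2\rmax,\,\text{something Gaussian-in-}\theta)$ combined with the Allee term $-f(u)\le 0$ gives $\|u^\sigma(t,\cdot)\|_\infty\le 2\rmax$ for all $t$ (the reaction $r u-u\rho-f(u)$ is $\le 0$ once $u\ge 2\rmax$ because then $ru\le 2\rmax^2$ while the competition term is large; here the normalization $\rmax\ge 1$ helps). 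From $\|u^\sigma\|_\infty\le 2\rmax$ I then want a uniform bound $\rho_{u^\sigma}(t)\le R$ with $R$ depending only on $\rmax,\alpha$: integrate \eqref{eq:main} in $\theta$ to get $\rho'=\int r u - \rho^2 - \int f(u)\le \rmax\rho - \rho^2$ on the region where $r\ge 0$, plus an error from the tail region $|\theta|>\sqrt{2\rmax}/\alpha$ where $r<0$, which only helps the upper bound; so $\rho_{u^\sigma}(t)\le\max(\rho_{u_0^\sigma},\rmax)\le\max(4\rmax,\rmax)=4\rmax=:R$ (using $\rho_{u_0^\sigma}\le 2\rmax\cdot|\mathrm{supp}|$—one should instead bound $\rho_{u_0^\sigma}$ via the explicit $\varphi$-type profile, giving $\rho_{u_0^\sigma}\le 6\rmax$; in any case a uniform $R=R(\rmax)$ exists).

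The heart of the argument is a lower bound. I would compare $u^\sigma$ on a fixed bounded box, say $Q=[-\tfrac12,\tfrac12]$, with the solution of a \emph{local} bistable-type equation obtained by freezing $\rho$ at its uniform upper bound $R$: consider $\underline v$ solving $\partial_t\underline v=\partial_{\theta\theta}\underline v+(\rmax-\alpha^2\theta^2)\underline v - R\,\underline v - f(\underline v)$ on $\R$ with $\underline v(0,\cdot)=u_0^\sigma\ge 0$. Since $\rho_{u^\sigma}(t)\le R$, the function $u^\sigma$ is a supersolution of this equation, so $u^\sigma\ge\underline v\ge 0$ pointwise for all time. Now the point is that $\underline v$ is a scalar reaction–diffusion equation whose reaction $g(\theta,s):=(\rmax-\alpha^2\theta^2-R)s-f(s)$ is, near $\theta=0$, a bistable nonlinearity in $s$; and $\underline v(0,\cdot)\ge\rmax\mathbf 1_{[-1/2,1/2]}$ (actually $\ge\rmax$ on $[-\tfrac12,\tfrac12]$) is a "large" datum on a box. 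By the classical theory of bistable equations with spatially inhomogeneous coefficients on a bounded domain (construct a stationary subsolution: a small Gaussian-type bump $\psi$ supported in a slightly larger interval, with $\psi\le\underline v(0,\cdot)$ and $\partial_{\theta\theta}\psi+g(\theta,\psi)\ge 0$—possible for $\rmax$ large because the "good" part of the reaction $\rmax s$ dominates the bounded terms $\alpha^2\theta^2 s\le 4\rmax^2\cdot(\text{width})^2 s$, $Rs=4\rmax s$, $f(s)\le 2\rmax$ uniformly), one gets $\underline v(t,\cdot)\ge\psi$ for all $t$, hence $\rho_{u^\sigma}(t)\ge\rho_{\underline v}(t)\ge\int\psi=:\rho^*>0$. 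This $\rho^*$ depends only on $\rmax,\alpha$ and not on $\sigma$, giving the claim.

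The delicate point—and what forces "$\rmax$ large enough"—is the construction of the stationary subsolution $\psi$ for the frozen equation: one needs a positive function $\psi$ on an interval $[-L,L]$ with $L\lesssim 1$, vanishing at $\pm L$, with $\|\psi\|_\infty$ not too small, such that $\psi''+(\rmax-\alpha^2\theta^2-R)\psi-f(\psi)\ge 0$. The obstruction is the Allee term: near where $\psi$ is small, $-f(\psi)\approx -f'(0)\psi<-\rmax\psi$ is \emph{negative} and of the same order as the good term. The standard fix is to work on a box large enough that the principal Dirichlet eigenvalue of $\partial_{\theta\theta}+(\rmax-\alpha^2\theta^2-R)$ exceeds the threshold associated to $f$, and to take $\psi$ a suitably scaled multiple of the principal eigenfunction truncated where the reaction is favorable, \emph{plus} a cutoff argument to handle the region where $f$ bites. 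This is where I expect to spend the most effort: making the eigenvalue/energy estimate quantitative enough to see that "$\rmax$ large" (with $\alpha^2\in[\rmax,2\rmax]$, so $\alpha$ grows too) indeed tips the balance, since then on a unit-scale box $\rmax-\alpha^2\theta^2-R\ge\rmax-2\rmax L^2-4\rmax=\rmax(1-2L^2-4)<0$—so in fact one must choose $L$ \emph{small} (of order $1/\sqrt\rmax$ or so) and rescale; the interplay $\alpha^2\le 2\rmax$ is exactly what keeps $\alpha^2\theta^2$ controlled by $\rmax$ on the relevant box. I would carry this out by the change of variables $\theta=\xi/\sqrt\rmax$, after which the equation for $\psi$ becomes, to leading order in $1/\rmax$, the autonomous bistable problem $\psi''+\psi-\rmax^{-1}f(\psi)\gtrsim 0$ on a unit box, and since $\rmax^{-1}\|f\|_\infty\to 0$ and $\rmax^{-1}f'(0)=\rmax^{-1}\cdot(\text{something}>\rmax)$ stays bounded, one recovers a clean autonomous sub/supersolution picture; the surviving subsolution then pushes back $\rho_{\underline v}$ away from $0$ uniformly.
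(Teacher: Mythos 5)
The claimed uniform-in-$\sigma$ bound on the initial mass is false: property~5 of the family conditions~\eqref{eq:family_initial_conditions} imposes $\|u_0^\sigma\|_{L^1}\to+\infty$ as $\sigma\to+\infty$, so there is no constant $R=R(\rmax,\alpha)$ with $\rho_{u_0^\sigma}\le R$ for all $\sigma\ge 1$ (your bound ``$\rho_{u_0^\sigma}\le 6\rmax$'' only holds for $\sigma$ bounded). A uniform-in-$\sigma$ bound on $\rho_{u^\sigma}(t)$ \emph{is} available for $t$ bounded away from $0$ --- this is precisely what Lemma~\ref{lemma:u<exp(tr)} yields after integration, giving $\rho_{u^\sigma}(t)\le C_1\rmax$ for $t\ge 1/(3\alpha^2)$ with $C_1>1$ --- but the level one gets is necessarily $R\ge\rmax$, and this is where the more serious obstruction lies.

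Freezing the competition at any a~priori admissible level $R\ge\rmax$ destroys the bistable structure you rely on: the frozen reaction $g(\theta,s)=(\rmax-\alpha^2\theta^2-R)s-f(s)$ then satisfies $g(\theta,s)\le -f(s)\le 0$ for every $(\theta,s)$, so no positive stationary subsolution $\psi$ exists and the comparison solution $\underline v$ decays to $0$. Your own bookkeeping makes this visible when you write that the ``good'' term $\rmax s$ should dominate $Rs=4\rmax s$; it cannot. The difficulty is therefore not primarily the Allee term near $s=0$, as your last paragraph suggests, but the sign of the linear coefficient: any provable bound $R$ overestimates the competition so strongly that the frozen net growth rate is nonpositive. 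To make a frozen comparison work one would need $R$ close to the eventual asymptotic value of $\rho_{u^\sigma}$ (which the analysis shows is about $\rmax/4$), but that is exactly the information one is trying to establish, so the argument is circular.

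The proof in the paper avoids freezing $\rho$ entirely and works with the mass equation $\rho'=\int ru-\rho^2-\int f(u)$. The two ingredients are: (i) a sharp lower bound $\int ru\ge \frac{\rmax}{4}(\rho-1)-1$ for $t\ge 1/(3\alpha)$, obtained via Chebyshev's integral inequality applied to the two radially nonincreasing functions $r$ and $u^\sigma(t,\cdot)$ on the core interval $|\theta|\le 3\sqrt{\rmax}/(2\alpha)$, combined with the tail estimates of Proposition~\ref{prop:int u and int ru}; and (ii) Proposition~\ref{prop:rho>r}, which shows $\rho_{u^\sigma}(1/(3\alpha))\gtrsim \rmax^{3/4}$, hence above the unstable equilibrium $\rho^*\to 33$ of the comparison ODE $\rho'\ge\frac{\rmax}{4}(\rho-33)-\rho^2-1$ once $\rmax$ is large. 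No pointwise stationary subsolution for the density is needed, and the nonlocal term is handled directly at the level of $\rho$.
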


Our last evidence of the possible existence of two thresholds consist in proving, under some conditions, the co-existence of two ordered stationary solutions when the fitness function is constant. Here, we use a more specific form of the function $f$, assigning it a bump-like shape as in Fig.~\ref{fig:schematic_function_f}:
\begin{ass}[Bump function]\label{ass:bump} In addition to Assumption~\ref{ass:f}, there is $\varepsilon>0$ such that 
\begin{equation*} 
\left\{
\begin{aligned}
    &f \equiv 0   \text{ in }  (2\varepsilon, \infty),\\
    & f' \ge 0 \hbox{ in } (0,\varepsilon) \text{ and }   f' \le 0   \text{ in } (\varepsilon, 2\varepsilon), \\ & \rmax\, s - f(s) <0 \hbox{ for all } s \text{ in } (0, \varepsilon).
\end{aligned}
\right.
\end{equation*}
\end{ass}

\begin{thm}[Existence of two stationary states]\label{th:etats-stats}  Assume  that $r$ is constant, $r(\theta)=r= \rmax$. Assume that $f$ satisfies Assumption~\ref{ass:bump}. If 
\begin{equation}\label{eq:hyp_tech_f}
  2\varepsilon^2 \le \frac{1}{r}\int_0^{2\varepsilon} f <2 \varepsilon^2 + \frac{r^3}{128}   -\frac{\sqrt{2} \varepsilon}{8} r^{3/2},  
\end{equation}
equation~\eqref{eq:main} admits two bounded, positive and radially decreasing stationary solutions, $p_1,$ $p_2$ with $p_1<p_2$. 
\end{thm}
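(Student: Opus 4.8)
The stationary equation, with $r(\theta)\equiv\rmax$, reads $p'' + \rmax\, p - \rho_p\, p - f(p) = 0$, where $\rho_p = \int_\R p$. The plan is to treat $\rho_p$ as an unknown parameter $\lambda$ and study, for fixed $\lambda$, the ODE $p'' + (\rmax-\lambda)p - f(p) = 0$ on $\R$ with $p\to 0$ at $\pm\infty$, then close the loop by imposing the consistency condition $\int_\R p = \lambda$. First I would fix $\lambda \in (0,\rmax)$ and set $g_\lambda(s) := (\rmax-\lambda)s - f(s)$; under Assumption~\ref{ass:bump} one has $g_\lambda(s) < (\rmax - \lambda)s - (\text{something}) $, more precisely $g_\lambda < 0$ on $(0,\varepsilon)$ when $\lambda$ is not too large, $g_\lambda(s) = (\rmax-\lambda)s > 0$ for $s > 2\varepsilon$, so $g_\lambda$ is of bistable/ignition type. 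The standard phase-plane analysis (multiply by $p'$ and integrate) shows that a positive homoclinic-type solution $p$ decaying to $0$ at infinity, even and radially decreasing, exists if and only if the Hamiltonian $\tfrac12 (p')^2 + G_\lambda(p)$ vanishes along the orbit, where $G_\lambda(s) = \int_0^s g_\lambda$. Concretely, the maximal value $m = \|p\|_\infty = p(0)$ must satisfy $G_\lambda(m) = 0$ with $G_\lambda < 0$ on $(0,m)$; such an $m$ exists precisely when $G_\lambda$ eventually becomes positive, i.e. (roughly) when $\int_0^{2\varepsilon} f < (\rmax-\lambda)\,\cdot\,(\text{threshold})$, which is where the quantitative hypothesis \eqref{eq:hyp_tech_f} enters.

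Second, for each admissible $\lambda$ I would compute the map $\lambda \mapsto \mathcal{I}(\lambda) := \int_\R p_\lambda(\theta)\,d\theta$, using the phase-plane first integral to write $\theta$ as a function of $p$: from $p' = -\sqrt{-2G_\lambda(p)}$ on the half-line $\theta>0$ one gets $\mathcal{I}(\lambda) = 2\int_0^{m(\lambda)} \frac{s\,ds}{\sqrt{-2G_\lambda(s)}}$. The key point is to show that $\mathcal{I}$ takes the value $\lambda$ for (at least) \emph{two} distinct $\lambda$'s. I would argue this by a shape/monotonicity analysis of $\mathcal{I}$ versus the identity: as $\lambda$ decreases from $\rmax$ toward the lower admissible endpoint, $m(\lambda)$ and hence $\mathcal{I}(\lambda)$ behave non-monotonically — near the upper endpoint the orbit degenerates (small amplitude or vanishing integral on one side) while near the lower endpoint the coefficient $\rmax - \lambda$ is large, which again constrains the integral — so that the continuous curve $\lambda \mapsto \mathcal{I}(\lambda) - \lambda$ changes sign at least twice, or alternatively $\mathcal{I}(\lambda) - \lambda$ is positive somewhere in the interior while being negative (or zero by limiting argument) near both endpoints. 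Two sign changes give two solutions $\lambda_1 < \lambda_2$, hence two stationary states; the inequality chain \eqref{eq:hyp_tech_f} is exactly the condition guaranteeing the interior value is attained with the right strict sign (the left inequality $2\varepsilon^2 \le \frac1r\int_0^{2\varepsilon}f$ presumably forces the relevant energy sign near one endpoint, the right inequality with the $r^3/128$ and $r^{3/2}$ terms — reminiscent of explicit quadrature of $\int s\,ds/\sqrt{r s^2/4 - \cdots}$ type integrals — forces the complementary sign). Finally, ordering $p_1 < p_2$ follows because $\lambda_1 < \lambda_2$ forces $m(\lambda_1) > m(\lambda_2)$ is \emph{not} automatic, so I would instead compare the two orbits directly in the phase plane: the orbit with smaller $\lambda$ sits strictly outside (larger $|p'|$ for each $p$, larger amplitude) the one with larger $\lambda$ since $G_{\lambda_1} < G_{\lambda_2}$ pointwise on $(0,\varepsilon)\cup(2\varepsilon,\infty)$; hmm, one must be careful on $(\varepsilon,2\varepsilon)$, so the cleanest route is: $p_1, p_2$ solve the same elliptic equation with different values $\rho_{p_i}$, and since $\rho_{p_1} = \lambda_1 < \lambda_2 = \rho_{p_2}$, a sub/supersolution comparison (the equation for $p_1$ has the ``more favorable'' reaction $g_{\lambda_1} \ge g_{\lambda_2}$) together with radial monotonicity and uniqueness of the ground-state profile at a given $\lambda$ yields $p_1 > p_2$ — so after relabelling we obtain $p_1 < p_2$ with $p_1$ the one associated to the \emph{larger} $\lambda$.

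The main obstacle I anticipate is the non-monotonicity and sign analysis of $\mathcal{I}(\lambda) - \lambda$: unlike classical bistable homoclinic constructions where one only needs existence of a single ground state at a fixed reaction, here the feedback $\rho_p = \lambda$ couples amplitude and mass, and one must extract \emph{two} crossings from a single scalar equation, which requires sharp two-sided control of the quadrature $\int_0^{m(\lambda)} s(-2G_\lambda(s))^{-1/2}\,ds$ — in particular integrability at the endpoint $s = m(\lambda)$ (where $G_\lambda$ has a simple zero, so the singularity is $(m-s)^{-1/2}$, integrable) and a usable lower bound when $\lambda$ is close to $\rmax$. The explicit constants in \eqref{eq:hyp_tech_f} strongly suggest the authors bound $G_\lambda$ below and above by quadratics/piecewise-quadratics (using $f \ge 0$, $f \equiv 0$ on $(2\varepsilon,\infty)$, and the monotonicity of $f$ on the two subintervals) to turn the quadrature into elementary integrals; reproducing those estimates carefully, and checking that the admissible $\lambda$-interval is nonempty and that the endpoints give the correct limiting signs, is the technical heart of the argument. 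Everything else — existence and evenness of $p_\lambda$ by phase-plane shooting, smoothness, positivity, decay to $0$ — is standard ODE theory.
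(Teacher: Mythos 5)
Your overall strategy matches the paper's closely: parametrize the nonlocal term by a scalar $\lambda=\rho_p$, integrate $p''=g_\lambda(p)$ in the phase plane, set up the consistency condition $\mathcal{I}(\lambda)=\lambda$ where $\mathcal{I}(\lambda)=\int_\R p_\lambda$, and extract two roots by the intermediate value theorem. The paper does exactly this (with $g_\lambda(s)=(\lambda-r)s+f(s)$, the opposite sign of yours), exploiting that the amplitude $\alpha_\lambda=\|p_\lambda\|_\infty$ is explicit, $\alpha_\lambda=\sqrt{2\int_0^{2\varepsilon}f/(r-\lambda)}$, which yields a closed-form cosine arc on $\{p_\lambda>2\varepsilon\}$ and hence an explicit lower bound for $\mathcal{I}(\lambda)$, complemented by an exponential-tail upper bound. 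However, two of your ingredients point the wrong way.

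First, the endpoint sign picture is backwards. You suggest (as one alternative) that $\mathcal{I}(\lambda)-\lambda$ should be ``positive somewhere in the interior while being negative (or zero) near both endpoints'', and that ``near the upper endpoint the orbit degenerates (small amplitude or vanishing integral)''. In fact, as $\lambda\to r^-$ the amplitude $\alpha_\lambda\to+\infty$ and the mass $\mathcal{I}(\lambda)\to+\infty$, so $j(\lambda):=\mathcal{I}(\lambda)-\lambda\to+\infty$; and $\liminf_{\lambda\to 0^+}j(\lambda)>0$ as well (using the left inequality in \eqref{eq:hyp_tech_f} to ensure $\alpha_\lambda>2\varepsilon$). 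The interior sign change comes from the right inequality, which forces $j(r/2)<0$ after an explicit estimate of $\mathcal{I}(r/2)$. So $j$ is positive at both ends and dips negative in the middle — the mirror image of what you sketched. Without this, the IVT gives nothing, and your inequality chain does not materialize.

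Second, and more substantively, your ordering argument cannot be correct. You conclude from $g_{\lambda_1}\ge g_{\lambda_2}$ (more favorable reaction) that $p_1>p_2$ when $\lambda_1<\lambda_2$. But the consistency condition forces $\int_\R p_1=\lambda_1<\lambda_2=\int_\R p_2$, so $p_1>p_2$ pointwise is impossible for positive functions. The sub/supersolution heuristic fails precisely because the two profiles have \emph{different} central values $\alpha_{\lambda_1}<\alpha_{\lambda_2}$ (monotonicity of $\lambda\mapsto\alpha_\lambda$ is immediate from the explicit formula), so they are not comparable in the way you invoke. The paper's ordering is the opposite: the larger $\lambda$ gives the larger profile, obtained by comparing the two Cauchy problems \eqref{eq:ode1} starting from the ordered initial heights $\alpha_{\lambda_1}<\alpha_{\lambda_2}$. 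You would need to replace your comparison with one anchored to this monotonicity of $\alpha_\lambda$ rather than to the reaction term.

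Apart from these two reversals, the quadrature skeleton, the use of the two inequalities in \eqref{eq:hyp_tech_f} for the two opposing bounds, and the integrability at $s=m(\lambda)$ (simple zero of $G_\lambda$) are all consistent with the paper.
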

 
Assume for instance that $f$ is a triangular bump function of the form:
\[
f(s) = 
\begin{cases}
2r \left( \varepsilon - |s - \varepsilon| \right) & \text{if } |s - \varepsilon| < \varepsilon, \\ 
0 & \text{otherwise}.
\end{cases}
\]
This function can be smoothed so that it fulfills Assumptions~\ref{ass:f} and~\ref{ass:bump}. Moreover,
\[
\frac{1}{r} \int_0^{2\varepsilon} f(s)\, ds = 2\varepsilon^2,
\]
so the left inequality in~\eqref{eq:hyp_tech_f} is always satisfied, and the right inequality is satisfied as soon as $r > \left(16 \sqrt{2} \, \varepsilon\right)^{2/3}$.

\section{Numerical explorations}\label{s:numerical}

In this section, we investigate persistence vs.\ extinction for the Cauchy problem associated with~\eqref{eq:main} using a simple two-parameter family of compactly supported initial data.
For $L\ge 0$ (the support length) and $H>0$ (the initial height), we set
\[
u_0^{L,H}(\theta)\;=\;H\,\1_{(-L/2,\,L/2)}(\theta).
\]
For each fixed $H>0$, the family $(u_0^{L,H})_{L\ge 0}$, indexed by $\sigma=L$, satisfies the hypotheses in~\eqref{eq:family_initial_conditions}
(up to the H\"older regularity assumed in Assumption~\ref{ass:u0}).  The precise assumptions on $f$ and $r$ are described in the legend of Fig.~\ref{fig:extinction}.

\paragraph{Discretization and time stepping.}
All computations were carried out with the Readi2Solve toolbox~\cite{readi2solve}.
Equation~\eqref{eq:main} is discretized in space by second-order finite differences on a uniform grid in a finite domain $(-40,40)$ with homogeneous Dirichlet boundary conditions, yielding a semi-discrete method-of-lines ODE system.
Time integration is performed with \texttt{odeint} from \textsc{SciPy}, which uses \textsc{LSODA} to adaptively switch between Adams and \textsc{BDF} schemes.
A reproducible Python Jupyter notebook is available  \href{https://doi.org/10.17605/OSF.IO/VNZRH}{here}.

\paragraph{Diagnostic for persistence vs.\ extinction.}
In all experiments we evaluate $\max_{\theta} u(T,\theta)$ at the final time $T=5$.
If this maximum is below the Allee threshold, $\max_{\theta} u(T,\theta)<\varepsilon$, we classify the trajectory as heading toward extinction.
If it exceeds the upper plateau, $\max_{\theta} u(T,\theta)>2\varepsilon$, we classify it as persisting at later times.
We therefore plot, in Fig.~\ref{fig:extinction}, the map $(H,L)\mapsto \max_{\theta} u(T,\theta)$, which reveals two sharply contrasted regions corresponding to persistence  (in yellow)   and extinction  (in purple).
Along the narrow boundary separating these two regions, the long-term behavior remains uncertain and would require simulations with larger $T$ to be clarified; however, since this boundary zone is very thin, such refinements are not essential for our purposes.

\paragraph{Constant fitness.}
We first consider a constant fitness $r(\theta)\equiv r=\rmax$, as in Theorem~\ref{th:etats-stats}; results are shown in Fig.~\ref{fig:extinction}a.
For small $H$ we observe scenario \textbf{(E)} (extinction in all cases).
As $H$ increases there appears to be a critical height $H^*$ at which scenario \textbf{(E$^*$)} emerges (extinction in all cases except at a threshold).
For $H>H^*$ we then observe scenario \textbf{(EPE)} (persistence for an interval of $L$ delimited by two thresholds).

\paragraph{Fitness bounded from below.}
Next, we consider a fitness function $r$ with a maximum at $\theta=0$, which becomes negative away from the optimum while remaining bounded from below, in accordance with the assumptions of Theorem~\ref{th:sigma-gd}; see Fig.~\ref{fig:extinction}b.
Qualitatively, we again observe a transition \textbf{(E)} $\to$ \textbf{(E$^*$)} $\to$ \textbf{(EPE)} as the height $H$ increases, mirroring the constant-fitness case.

\paragraph{Quadratic fitness.}
Finally, we consider the quadratic landscape $r(\theta)=\rmax-\alpha  ^2 \theta^2$, which does not satisfy the ``bounded from below'' hypothesis of Theorem~\ref{th:sigma-gd}.
For weak selection (small $\alpha$), the outcome is qualitatively similar to the previous two settings: as $H$ increases we again observe the sequence \textbf{(E)} $\to$ \textbf{(E$^*$)} $\to$ \textbf{(EPE)}; see Fig.~\ref{fig:extinction}c.  

For stronger selection  (larger $\alpha$), the extinction-for-large-$L$ phenomenon observed in the other cases may fail.
In this regime, increasing $H$ leads to the extended progression \textbf{(E)} $\to$ \textbf{(E$^*$)} $\to$ \textbf{(EPE)} $\to$ \textbf{(EP)}; see Fig.~\ref{fig:extinction}d.
This demonstrates that the assumption of $r$ being bounded from below is not merely technical in Theorem~\ref{th:sigma-gd}: when it is not satisfied, the conclusion of the theorem may indeed fail.

\medskip
Overall, these computations support the qualitative picture developed in Sections~\ref{s:intro} and~\ref{s:results}: depending on the fitness landscape and on the initial \lq\lq mass--support trade-off'' $(H,L)$, the dynamics can exhibit one or two thresholds separating extinction from persistence, with the nonlocal competition term playing a key role in producing the ``two-thresholds'' \textbf{(EPE)} regime. A natural question is which stationary state is reached in the persistent regime as $L$ varies.
Our numerical simulations suggest that solutions consistently converge to the same stationary state (Fig.~\ref{fig:final_profiles}).
By contrast, Theorem~\ref{th:etats-stats} guarantees, in the constant $r$ case, the existence of (at least) two distinct stationary states. The numerical evidence therefore points to only one of them being stable.

\begin{figure}[htbp]
  \centering

  % (a) constant
  \begin{subfigure}[t]{0.48\textwidth}
    \centering
    \includegraphics[width=\linewidth]{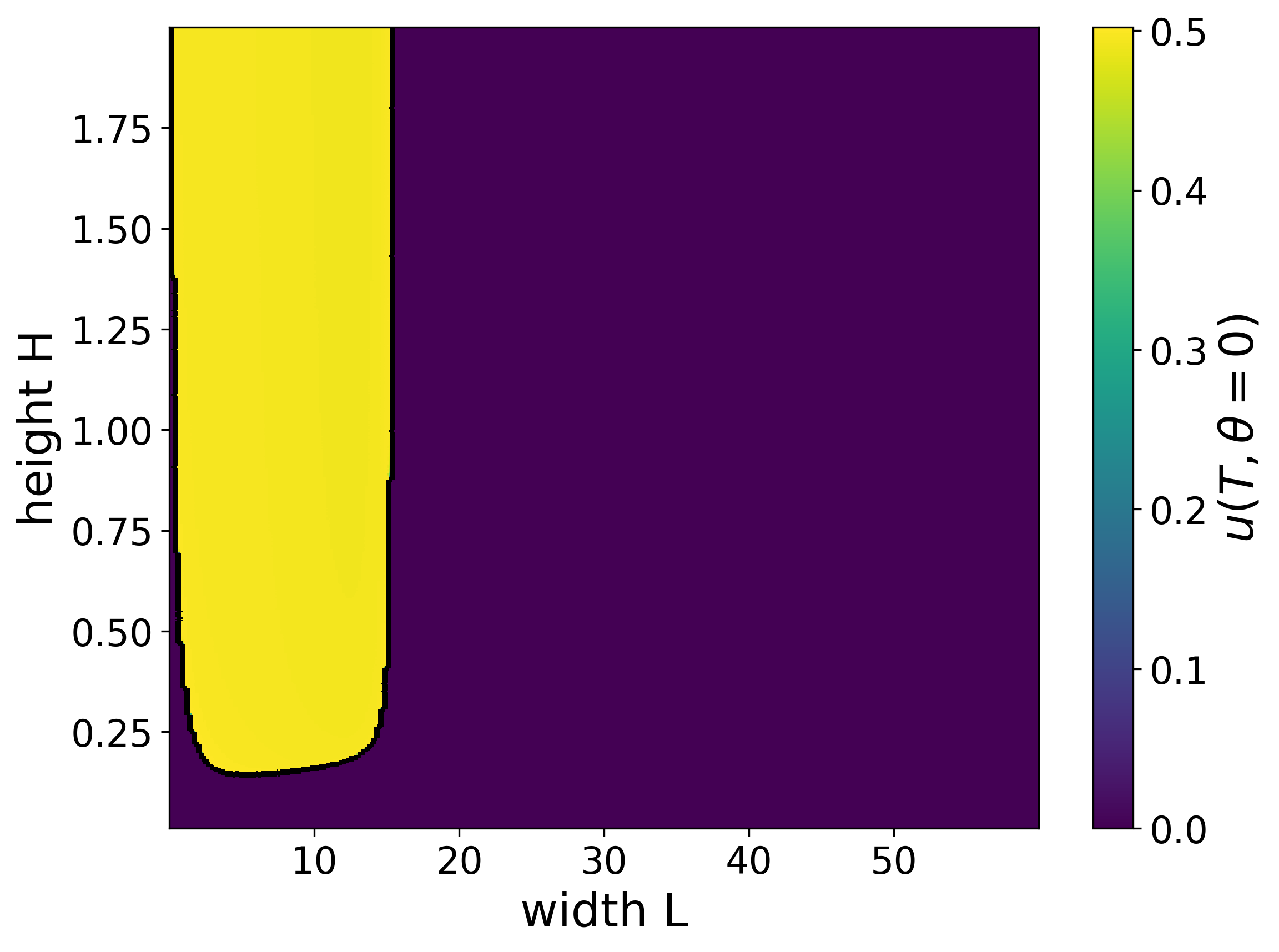}
    \subcaption{$r(\theta) = 2$}
    \label{fig:ext4-constant}
  \end{subfigure}
  \hfill
  % (b) bounded
  \begin{subfigure}[t]{0.48\textwidth}
    \centering
    \includegraphics[width=\linewidth]{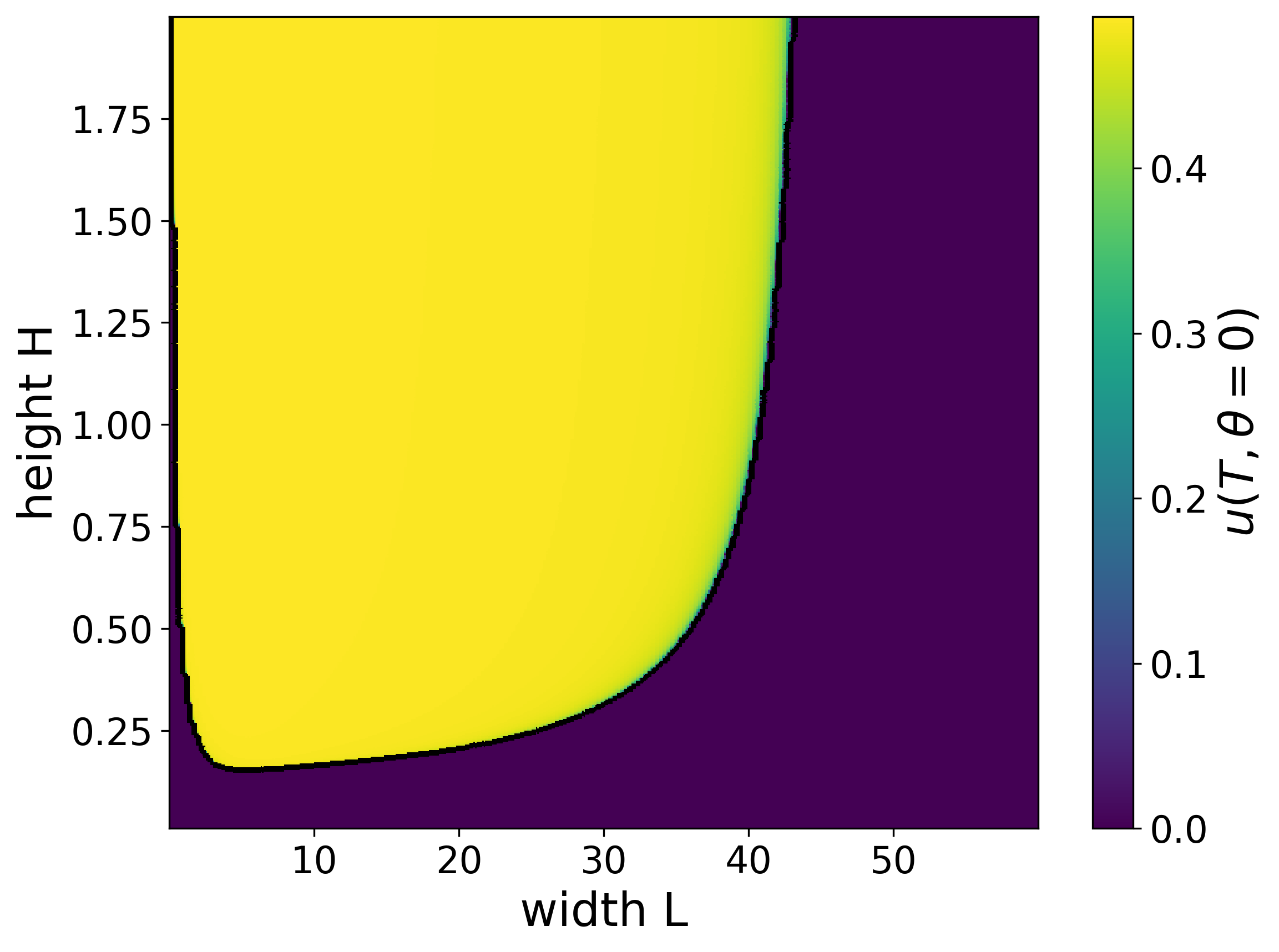}
    \subcaption{$r(\theta) = 2\!\left(2e^{-0.08\theta^{2}}-1\right)$}
    \label{fig:ext4-bounded}
  \end{subfigure}

  \vspace{0.7em}

  % (c) small selection
  \begin{subfigure}[t]{0.48\textwidth}
    \centering
    \includegraphics[width=\linewidth]{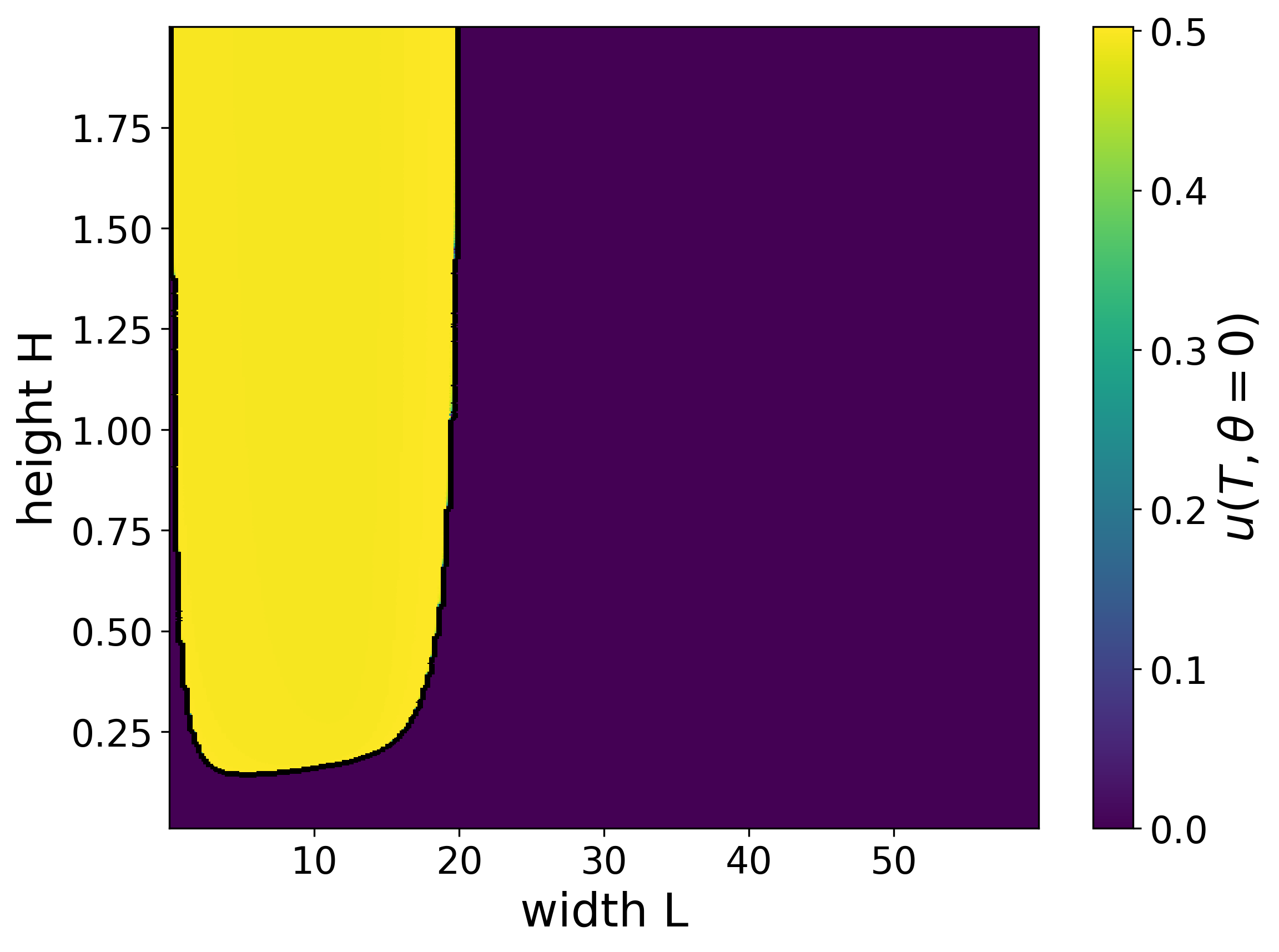}
    \subcaption{$r(\theta) = 2 - 0.01\,\theta^{2}$}
    \label{fig:ext4-small}
  \end{subfigure}
  \hfill
  % (d) strong selection
  \begin{subfigure}[t]{0.48\textwidth}
    \centering
    \includegraphics[width=\linewidth]{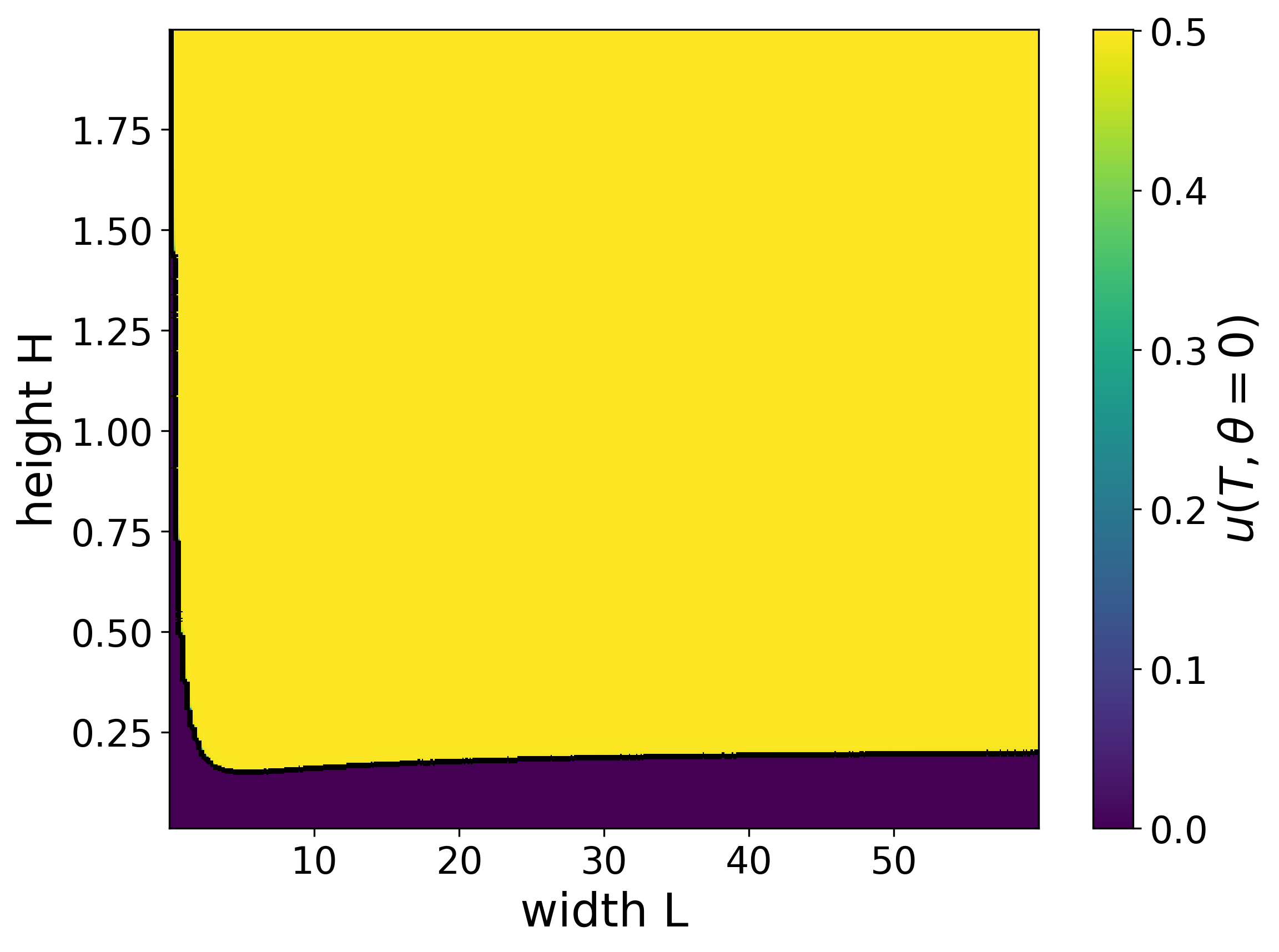}
    \subcaption{$r(\theta) = 2 - 0.2\,\theta^{2}$}
    \label{fig:ext4-strong}
  \end{subfigure}

  \caption{Extinction--persistence diagrams obtained from the final value $u(T,\theta=0)$
  as a function of the initial width $L$ and height $H$ of the initial condition $u_0^{L,H}$.  
  Panels: (a) constant selection; (b) bounded selection profile; 
  (c) weak quadratic decrease; (d) strong quadratic decrease. In all cases, 
  $f(u) = 15u\!\left(1-\tfrac{u}{2\varepsilon}\right)^{2}\mathbf{1}_{\{u<2\varepsilon\}}$
  with $\varepsilon=0.1$ and final time $T=5$.}
  \label{fig:extinction}
\end{figure}

\begin{figure}[htbp]
    \centering
    \includegraphics[width=0.8\textwidth]{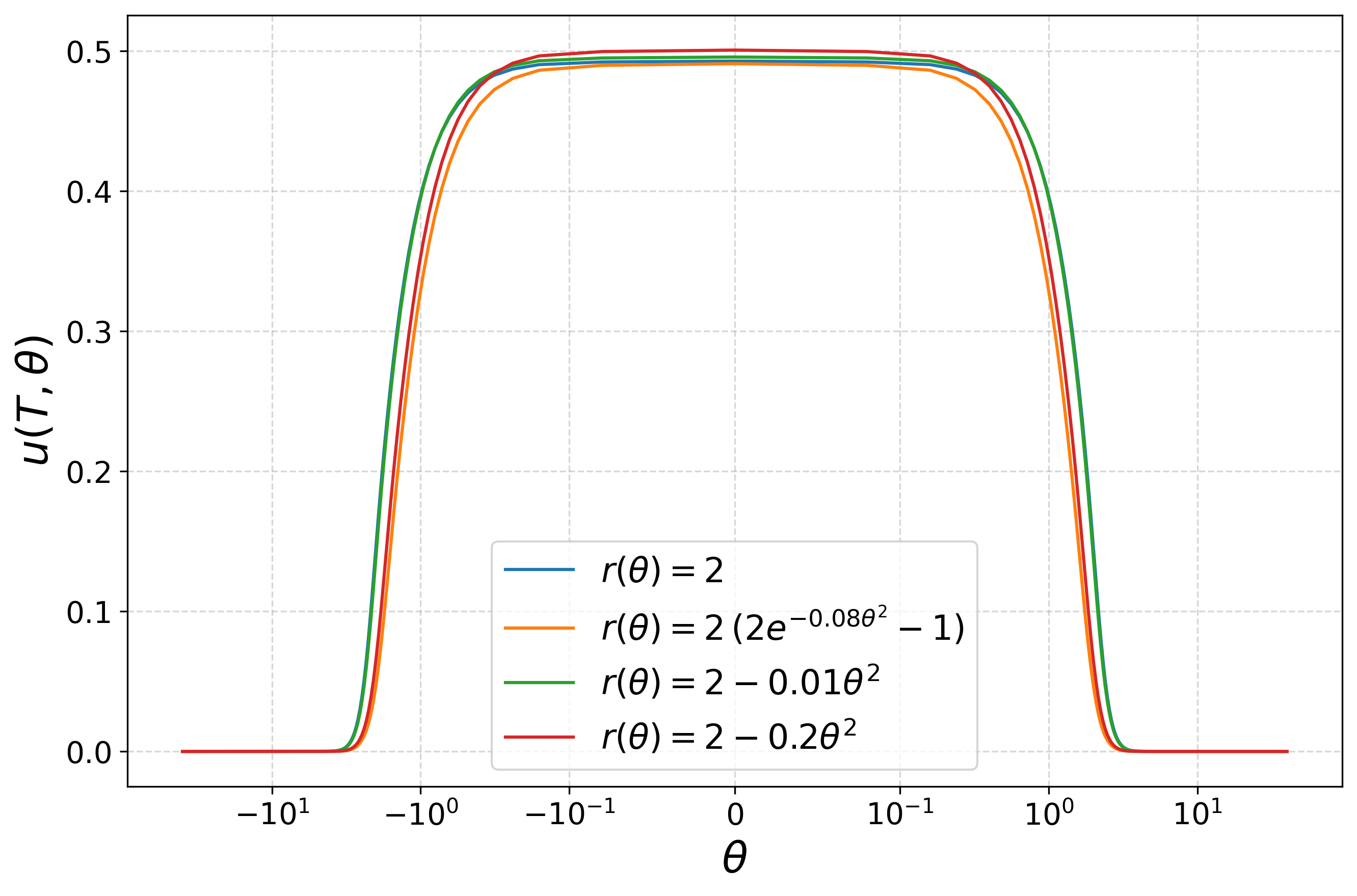}
    \caption{Final-time solution profiles $u(T,\theta)$ for four different selection functions $r$, at $T=5$. 
    Numerically, all the initial conditions that lead to persistence converge to these profiles. 
    A logarithmic scale in $\theta$ is adopted here to highlight the differences between the profiles obtained 
    for the different choices of $r(\theta)$.}
    \label{fig:final_profiles}
\end{figure}

%\section{Discussion}\label{s:disc}
%Dans le désordre

%\begin{itemize}
%\item En lien avec l'intro: Différences par rapport aux modèles classiques (replicator-mutator $f\equiv 0$ et bistable local)

%\item Effet du paramètre de selection, de la taille de la donnée initiale et absence de principe de comparaison. Interpretation (competitive release, donner des refs). 

%\item Presentation d'un modèle multi-hôtes de propagation de zoonoses et quelques conjectures basées sur le présent travail.
%\end{itemize}

\section{Well-posedness of the Cauchy problem}\label{s:cauchy}

\paragraph*{Proof outline.}
We work in a function space $X_{T^*}$, introduced below in \eqref{def: XT},  and establish a~priori estimates that yield the existence and uniqueness of solutions to the Cauchy problem associated with \eqref{eq:main}; existence is obtained via a fixed-point method and compactness, while uniqueness follows from a comparison estimate.

\smallskip
\emph{(i) Gaussian control, positivity, and pointwise $u$–$\rho_u$ link.}
A heat-kernel comparison (Lemma~\ref{lemma:u<e^(Ct)*gaussian}) yields Gaussian upper bounds for subsolutions of parabolic equations involving quadratically increasing coefficients, as in Assumption~\ref{ass:r}.  Applied to $u^2$, this gives a Gaussian envelope for $u$ on each $[0,T]$ (Proposition~\ref{prop:u<gaussian}). The parabolic maximum principle then ensures $u\ge0$ (and $u>0$ for $t>0$ if $u_0\not\equiv0$), see Proposition~\ref{prop:u positive}. Applying once more the heat-kernel representation, we control $u$ in terms of $\rho_u(t)$ (Lemma~\ref{lemma: u < C rho}).

\smallskip
\emph{(ii) Dynamics of $\rho_u$ via derivative estimates.}
To justify integrating \eqref{eq:main} with respect to $\theta\in\R$ and obtaining an equation on $\rho_u'$, we prove Gaussian decay for $\partial_\theta u$, $\partial_{\theta\theta}u$, and $\partial_t u$ on interior time slabs (Proposition~\ref{prop:du<gaussian}). The argument uses the Duhamel formula, a near/far spatial splitting in the heat-kernel integrals, and the envelope from $(i)$. These controls allow dominated convergence for $\partial_t u$ and show that $\ds\int_\R \partial_{\theta\theta}u\,d\theta=0$, yielding an ODE for $\rho_u$. Then, a logistic-type comparison gives a global bound on $\rho_u$, and combining this with Lemma~\ref{lemma: u < C rho} yields a global bound on $u$ (Proposition~\ref{prop: u < M, rho < M'}).

\smallskip
\emph{(iii) Uniqueness in $X_{T^*}$.}
For two solutions $u,v$, the difference $w=u-v$ solves a linear inequality with bounded coefficients and a Gaussian source depending on $\|\rho_w\|_{L^\infty}$. Lemma~\ref{lemma:u<e^(Ct)*gaussian} gives a short-time estimate forcing $\rho_w\equiv0$, and a continuation argument extends this to $(0,T^*)$ (Proposition~\ref{prop:uniqueness}).

\smallskip
\emph{(iv) Existence by truncation, fixed point, and compactness.}
We first solve a truncated problem with compactly supported fitness term $\varphi \, r$ by a fixed point in a weighted H\"older space, using the Duhamel formula and the bounds from step $(i)$ (Lemma~\ref{lemma:local wel-pos}); continuation yields a global solution (Proposition~\ref{prop:truncated well-pos}). Removing the truncation with cutoffs $\varphi_n\to1$, uniform Gaussian and H\"older controls give equicontinuity; Arzel\`a-Ascoli and dominated convergence allow us to pass to the limit in the Duhamel representation. Parabolic regularity implies that the limit lies in $X_\infty$, and step $(iii)$ implies uniqueness, completing the proof of Theorem~\ref{th:well-pos}.

\subsection{A priori properties}

Let $T^* \in (0,+\infty]$. We define the space
\begin{equation}\label{def: XT}
X_{T^*} := \left\{ u \in C^0([0,T^*) \times \mathbb{R}) \ \middle| \
\begin{aligned}
&u \in C^{1;2}_{t;\theta}((0,T^*) \times \mathbb{R}), \\
&t \mapsto \int_{\mathbb{R}} u(t,\theta)\, d\theta \text{ is locally bounded in } [0,T^*), \\
&\forall T \in [0,T^*), \ \exists C_T, \gamma_T > 0 \ \text{s.t.} \ |u(t,\theta)| \leq C_T e^{\gamma_T \theta^2} \ \text{ in }  [0,T] \times \mathbb{R}
\end{aligned}
\right\}.
\end{equation}

In the sequel, we assume that  $u\in X_{T^*}$ satisfies \eqref{eq:main} for $t \in (0,T^*)$. We derive a priori estimates and properties that will allow us to establish the existence and uniqueness of a solution in $X_{T^*}$. We will begin by establishing the positivity of $u$, and bounding it and its derivatives by a Gaussian function locally in time. This will, in turn, help to prove the well-posedness of the parabolic equation satisfied by $\rho$. Using this equation, we will prove the global boundedness of both $\rho$ and $u$. Before going further on, we establish some properties of the elements of the set $X_{T^*}$.
\begin{lemma} \label{lemma:u<e^(Ct)*gaussian}
Let $T>0$ and $v\in C^0([0,T]\times\R)\cap C^{1;2}_{t;\theta}((0,T]\times\R)$.
Suppose there are $C_0, \mu_0, C_T, \gamma_T > 0$ such that $v(0,\theta) \leq C_0 e^{-\mu_0\theta^2}$ and $|v(t,\theta)| \leq C_T e^{\gamma_T \theta^2} \ \text{ in }  [0,T] \times \R$.

Assume $a \in C^0([0,T] \times \mathbb{R})$ satisfies $a(t,\theta) \leq A$ and $|a(t,\theta)| \leq C_a(1 + \theta^2)$ for some constants $A, C_a > 0$ and all $(t,\theta) \in [0,T] \times \mathbb{R}$.

\begin{enumerate}
    \item Suppose there exist constants $Q_1, \mu_1 > 0$ such that
    $$
    \dt v(t,\theta) - \dthth v(t,\theta)
    \leq a(t,\theta) v(t,\theta) + Q_1 e^{-\mu_1 \theta^2}
    \quad \text{in } (0,T] \times \mathbb{R}.
    $$
    Then, for all $(t,\theta) \in (0,T] \times \mathbb{R}$,
    $$
    v(t,\theta) \leq e^{A t} \left[ C_0 e^{-\frac{\mu_0 \theta^2}{1 + 4 \mu_0 t}} + T Q_1 e^{-\frac{\mu_1 \theta^2}{1 + 4 \mu_1 t}} \right].
    $$

    \item Suppose that $\limsup_{|\theta| \to \infty} v(t,\theta) \leq 0$ uniformly in $t \in [0,T]$, and that
    $$
    \dt v(t,\theta) - \dthth v(t,\theta)
    \leq a(t,\theta) v(t,\theta) + B |v(t,\theta)| + Q_1 e^{-\mu_1 \theta^2}
    \quad \text{in } (0,T] \times \mathbb{R},
    $$
    for some constants $B, Q_1, \mu_1 > 0$. Then, for all $(t,\theta) \in (0,T] \times \mathbb{R}$,
    $$
    v(t,\theta) \leq e^{(A+B)t} \left[ C_0 e^{-\frac{\mu_0 \theta^2}{1 + 4 \mu_0 t}} + T Q_1 e^{-\frac{\mu_1 \theta^2}{1 + 4 \mu_1 t}} \right].
    $$
\end{enumerate}
\end{lemma}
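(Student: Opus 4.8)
The statement is a comparison/maximum-principle estimate, so the plan is to construct an explicit supersolution built out of the standard heat kernel and invoke the parabolic maximum principle on the whole line, using the Gaussian growth bound $|v(t,\theta)|\le C_Te^{\gamma_T\theta^2}$ to justify the comparison at spatial infinity. For item (1), first recall that for $\mu>0$ the function
\[
G_\mu(t,\theta):=\frac{1}{\sqrt{1+4\mu t}}\,e^{-\frac{\mu\theta^2}{1+4\mu t}}
\]
solves the heat equation $\dt G_\mu=\dthth G_\mu$ and satisfies $G_\mu(0,\theta)=e^{-\mu\theta^2}$; a direct computation (which I will only sketch) shows $0<G_\mu\le 1$ and, crucially, $\dt G_\mu-\dthth G_\mu=0$. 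Then I would set
\[
\overline v(t,\theta):=e^{At}\Bigl[C_0\,e^{-\frac{\mu_0\theta^2}{1+4\mu_0 t}}+tQ_1\,e^{-\frac{\mu_1\theta^2}{1+4\mu_1 t}}\Bigr],
\]
and check that it is a supersolution of the differential inequality satisfied by $v$: because $\partial_t-\partial_{\theta\theta}$ annihilates each Gaussian profile (up to the missing $1/\sqrt{1+4\mu t}$ prefactor, which only helps since it would make $\overline v$ larger — so I actually use $e^{-\mu\theta^2/(1+4\mu t)}\ge G_\mu$ as a clean supersolution of the heat equation, or just verify the inequality $\dt e^{-\mu\theta^2/(1+4\mu t)}\ge \dthth e^{-\mu\theta^2/(1+4\mu t)}$ directly), the terms from $e^{At}$ produce exactly $A\overline v$, the time-derivative of the $tQ_1(\cdots)$ factor produces the $Q_1e^{-\mu_1\theta^2/(1+4\mu_1t)}\ge Q_1 e^{-\mu_1\theta^2}\cdot(\text{something})$ — here one must be slightly careful with the direction of the inequality in $\theta$, which I address below. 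Finally $\overline v(0,\theta)=C_0e^{-\mu_0\theta^2}\ge v(0,\theta)$, so $w:=\overline v-v$ satisfies $\dt w-\dthth w\ge a(t,\theta)w$ with $w(0,\cdot)\ge0$ and at most Gaussian growth; the standard Phragmén–Lindelöf / maximum principle for parabolic operators with coefficients bounded above and growing at most quadratically (e.g.\ the version in \cite{friedman-parabolic}, applied on $[0,T]\times\R$ after the change $w=e^{At}\tilde w$ and a Gaussian weight to kill the growth) gives $w\ge0$, i.e.\ the claimed bound.

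For item (2), the only new feature is the extra term $B|v|$. The plan is to split $v=v_++v_-$ informally, or more cleanly, observe that wherever $v(t,\theta)\le 0$ the conclusion $v(t,\theta)\le \overline v(t,\theta)$ is automatic since $\overline v>0$; and wherever $v>0$ one has $B|v|=Bv$, so on the (relatively open) set $\{v>0\}$ the function $v$ satisfies $\dt v-\dthth v\le (a+B)v+Q_1e^{-\mu_1\theta^2}$ with $a+B\le A+B$. Combined with the hypothesis $\limsup_{|\theta|\to\infty}v(t,\theta)\le0$, which guarantees that $\{v>0\}$ has the right behaviour at the boundary, one reruns the argument of item (1) with $A$ replaced by $A+B$. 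Alternatively — and this is the route I would actually write up, as it avoids fussing over the geometry of $\{v>0\}$ — define $\overline v$ as above with $e^{(A+B)t}$ in front, let $w=\overline v-v$, and show $w\ge0$ by a contradiction argument: if $\inf w<0$ then, using $\limsup_{|\theta|\to\infty}v\le0$ and $\overline v\to0$ as $|\theta|\to\infty$ (uniformly on $[0,T]$), the negative infimum of $w$ over $[0,T]\times\R$ is attained at some interior point $(t_0,\theta_0)$ with $t_0>0$ (or is approached along a sequence, handled by the usual translation/limiting argument); at that point $w<0$ means $v>\overline v>0$ so $B|v|=Bv$, and the parabolic inequalities give $0\ge \dt w-\dthth w\ge (a+B)w$ at $(t_0,\theta_0)$ — but $(a+B)w$ could have either sign since $a+B$ need not be nonnegative, so one first conjugates out the factor $e^{(A+B)t}$ and works with $\tilde w:=e^{-(A+B)t}w$, for which the zeroth-order coefficient becomes $a-A\le 0$, and now $(a-A)\tilde w\le 0$ at a negative minimum of $\tilde w$ since $\tilde w<0$ there, contradicting the strict parabolic inequality (Hopf/strong maximum principle at the minimum, or a small $\delta t e^{K\theta^2}$-type perturbation to get a strict inequality before passing to the limit).

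The main obstacle is not conceptual but bookkeeping: making the maximum-principle argument on the \emph{unbounded} domain $\R$ rigorous, given only that $v$ grows at most like $e^{\gamma_T\theta^2}$ while the coefficient $a(t,\theta)$ is allowed to grow quadratically (this mismatch — Gaussian growth of the solution vs.\ quadratic growth of the potential — is exactly the regime where naive Phragmén–Lindelöf fails for large $t$, and one typically needs $t$ restricted to a short interval $[0,\tau]$ with $\tau$ depending on $\gamma_T$ and $C_a$, then iterated). Concretely, I expect to prove the estimate first on $[0,\tau]$ for $\tau$ small enough that $e^{\gamma_T\theta^2}$ is dominated by the Gaussian-weight test function $e^{\eta\theta^2/(1-\kappa t)}$ used to justify the comparison (this is where the hypothesis that $a$ is bounded \emph{above} by $A$, not just in absolute value, is essential — it lets the weight only need to beat the $C_a(1+\theta^2)$ \emph{lower} excursions of $a$), and then note that $\tau$ can be chosen uniformly in the starting time, so finitely many iterations cover $[0,T]$; at each iteration the new "initial datum" still has a Gaussian upper bound of the form $C_0'e^{-\mu_0'\theta^2}$ with $\mu_0'=\mu_0/(1+4\mu_0\tau)$, so the induction closes. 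A secondary, purely technical point is verifying the sign of $\dt\bigl(e^{-\mu\theta^2/(1+4\mu t)}\bigr)-\dthth\bigl(e^{-\mu\theta^2/(1+4\mu t)}\bigr)$: the self-similar Gaussian profile (without the $1/\sqrt{1+4\mu t}$ prefactor) is in fact a \emph{supersolution} of the heat equation, $\dt-\dthth\ge0$ for it, which is the correct sign for $\overline v$ to dominate — this I would state as a one-line computation lemma and use throughout.
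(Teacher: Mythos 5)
Your proof is correct, and it follows a route parallel to, but slightly different from, the paper's. The paper builds the comparison function $\bar v$ via the Duhamel formula,
\[
\bar v(t,\theta)=e^{Ct}\Bigl[\int_\R G(t,\theta-\eta)C_0e^{-\mu_0\eta^2}\,d\eta+\int_0^t\!\!\int_\R G(t-s,\theta-\eta)Q_1 e^{-\mu_1\eta^2}\,d\eta\,ds\Bigr],
\]
so that $\bar v$ is an \emph{exact nonnegative solution} of $\dt\bar v-\dthth\bar v=C\bar v+Q_1e^{-\mu_1\theta^2}$; it then computes the Gaussian convolutions explicitly, drops the favourable $1/\sqrt{1+4\mu t}$ factors to get the stated closed-form bound, and compares $v$ with $\bar v$ via Friedman (Ch.~2, Thm.~9 for item~1; Thm.~16, the Lipschitz-nonlinearity comparison, for item~2, where the key observation is simply that $\bar v\ge0$ implies $B|\bar v|=B\bar v$). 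You instead write the closed-form bound down directly as your candidate $\overline v$ and verify it is a \emph{supersolution}, using the computation $\dt\phi_\mu-\dthth\phi_\mu=\frac{2\mu}{1+4\mu t}\phi_\mu>0$ for $\phi_\mu(t,\theta)=e^{-\mu\theta^2/(1+4\mu t)}$ together with $\phi_{\mu_1}(t,\theta)\ge e^{-\mu_1\theta^2}$ (and the $\dt$-hit on the factor $t$ producing the source term). This bypasses the Duhamel-and-convolution computation at the cost of verifying the supersolution inequality by hand; the two are morally the same, since $\phi_\mu$ is exactly $\sqrt{1+4\mu t}\cdot(G(t,\cdot)*e^{-\mu\cdot^2})$ and dropping that prefactor is precisely what makes it a supersolution. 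Two minor comments. First, you devote a paragraph to iterating the maximum principle on short time intervals because of the mismatch between Gaussian solution growth and quadratic potential growth; this is sound instinct, but it is exactly what Friedman's Theorem~9 packages for you, and the paper simply cites it — you need not redo the iteration. Second, for item~2 your contradiction-at-a-negative-minimum argument does close, precisely \emph{because} the strict inequality $\dt\phi_\mu-\dthth\phi_\mu>0$ gives a strict supersolution inequality without any extra $\delta t e^{K\theta^2}$ perturbation; but the paper's route — observing that $\overline v\ge0$ makes $B|\overline v|=B\overline v$ and citing the Lipschitz comparison theorem — is shorter and avoids worrying about whether/where the infimum of $w$ is attained.
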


Throughout the proofs of this section, we will note $G(t,\theta):=\frac{1}{\sqrt{4\pi t}} e^{-\frac{\theta^2}{4t}}$ the fundamental solution to the heat equation over $(0,+\infty)\times\R$.

\begin{proof}
    The proof of both results is similar. We begin by defining the function:
    \begin{equation*}
        \bar{v}(t,\theta)
        := e^{Ct} \left[\int_\R G(t,\theta-\eta) C_0 e^{-\mu_0\eta^2}d\eta + \int_0^t \int_\R G(t-s,\theta-\eta)Q_1 e^{-\mu_1\eta^2} \, d\eta \, d s\right],
    \end{equation*}
    where $C$ is taken to be equal to $A$ in the first case of Lemma \ref{lemma:u<e^(Ct)*gaussian} and equal to $A+B$ in the second case.
    We can check that $\bar{v}$ is a non-negative solution of the equation:
    \begin{equation*}
        \dt \bar{v}(t,\theta) - \dthth \bar{v}(t,\theta)
        = C\bar{v}(t,\theta) + Q_1 e^{-\mu_1\theta^2},
    \end{equation*}
    with $\bar{v}(0,\theta)=C_0e^{-\mu_0\theta^2}\geq v(0,\theta)$, for all $\theta \in \R$.
    Furthermore, one may check that, for any $\mu>0$, any $(t,\theta)\in(0,T]\times\R$,
    \begin{equation*}
        \int_\R \frac{1}{\sqrt{4\pi t}}e^{-\frac{(\theta-\eta)^2}{4t}} e^{-\mu\eta^2}d\eta
        = \frac{e^{-\frac{\mu\theta^2}{1+4\mu t}}}{\sqrt{4\pi t}} \int_\R e^{-\frac{1+4\mu t}{4t} (\eta-\frac{\theta}{1+4\mu t})^2} d\eta
        = \frac{e^{-\frac{\mu\theta^2}{1+4\mu t}}}{\sqrt{1+4\mu t}}
        \leq e^{-\frac{\mu\theta^2}{1+4\mu t}},
    \end{equation*}
    and thus, 
    \begin{equation*}
        \bar{v}(t,\theta)
        \leq e^{Ct} \left[
        C_0 e^{-\frac{\mu_0\theta^2}{1+4\mu_0 t}} + Q_1 \int_0^t e^{-\frac{\mu_1\theta^2}{1+4\mu_1 (t-s)}} ds
        \right]
        \leq e^{Ct} \left[
        C_0 e^{-\frac{\mu_0\theta^2}{1+4\mu_0 t}} + Q_1 T e^{-\frac{\mu_1\theta^2}{1+4\mu_1 t}}
        \right].
    \end{equation*}
    All that is left is proving we have $v\leq \bar{v}$. To prove the first item of the lemma, we use the inequality:
    \begin{equation*}
        \dt (v-\bar{v}) - \dthth (v-\bar{v})
        \leq a \, (v-\bar{v}),
        \ \ \ \ \ (t,\theta)\in(0,T]\times\R,
    \end{equation*}
    and, in view of  the hypotheses on $a$ and $v$, we conclude 
thanks  to the    the parabolic maximum principle \cite[Chapter 2, Theorem 9]{friedman-parabolic}. As for  the second item of the lemma, we observe that
    \begin{equation*}
        \dt \bar{v} - \dthth \bar{v}
        \geq a\bar{v} + B\abs{\bar{v}} + Q_1e^{-\mu_1\theta^2},
        \ \ \ \ \ (t,\theta)\in(0,T]\times\R,
    \end{equation*}
    and we conclude thanks to the comparison principle \cite[Chapter 2, Thorem 16]{friedman-parabolic}.
\end{proof}

\begin{prop} \label{prop:u<gaussian}
    Let $T\in(0,T^*)$. There exist $C,\mu>0$ such that
    \begin{equation*}
        |u(t,\theta)| \leq Ce^{-\mu\theta^2}, \quad \text{ for any } (t,\theta)\in[0,T]\times\R.
    \end{equation*}
\end{prop}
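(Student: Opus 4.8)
The plan is to apply Lemma~\ref{lemma:u<e^(Ct)*gaussian}(1) not directly to $u$ but to $w := u^2$, exploiting the fact that squaring converts the polynomially growing coefficient $r(\theta)$ into something controllable while keeping the sign of the reaction comfortable. First I would note that, since $u\in X_{T^*}$, on the slab $[0,T']\times\R$ (for some $T'\in(T,T^*)$) we have $u\in C^{1;2}_{t;\theta}$, a Gaussian \emph{growth} bound $|u|\le C_{T'}e^{\gamma_{T'}\theta^2}$, and the initial bound $u_0(\theta)\le C_0e^{-\mu_0\theta^2}$ from Assumption~\ref{ass:u0}; also $u\ge 0$ once positivity is available (but in fact we only need $|u|$, so the argument does not logically require Proposition~\ref{prop:u positive}). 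Then a direct computation gives
\begin{equation*}
\dt w - \dthth w = 2u\,\dt u - 2u\,\dthth u - 2(\dth u)^2 = 2u\big(r(\theta)u - u\rho_u(t) - f(u)\big) - 2(\dth u)^2.
\end{equation*}
Dropping the manifestly nonpositive terms $-2(\dth u)^2$, $-2u^2\rho_u(t)$ (here $\rho_u\ge 0$ since $u\ge 0$, or one uses $|\rho_u|$ bounded locally and absorbs it into the coefficient), and $-2uf(u)$ (nonpositive since $f\ge 0$, $u\ge 0$), we obtain
\begin{equation*}
\dt w - \dthth w \le 2r(\theta)\,w \quad\text{in }(0,T']\times\R.
\end{equation*}

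Now I would check the hypotheses of Lemma~\ref{lemma:u<e^(Ct)*gaussian}(1) with $v=w$, $a(t,\theta)=2r(\theta)$, $Q_1 e^{-\mu_1\theta^2}$ any convenient term (e.g.\ take $Q_1$ tiny, or observe the source is $0$ so one may simply add a harmless positive term). Assumption~\ref{ass:r} gives $r\le\rmax$, so $a\le 2\rmax=:A$, and the at-most-quadratic decay $r(\theta)\ge -C(1+\theta^2)$ together with $r\le\rmax$ gives $|a(t,\theta)|\le C_a(1+\theta^2)$. The initial datum satisfies $w(0,\theta)=u_0(\theta)^2\le C_0^2 e^{-2\mu_0\theta^2}$, and the Gaussian growth bound on $u$ gives $0\le w(t,\theta)\le C_{T'}^2 e^{2\gamma_{T'}\theta^2}$, so $w$ has the required structure. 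Lemma~\ref{lemma:u<e^(Ct)*gaussian}(1) then yields, for $(t,\theta)\in(0,T]\times\R$,
\begin{equation*}
w(t,\theta)\le e^{2\rmax t}\Big[C_0^2 e^{-\frac{2\mu_0\theta^2}{1+8\mu_0 t}} + T Q_1 e^{-\frac{\mu_1\theta^2}{1+4\mu_1 t}}\Big]\le C' e^{-\mu'\theta^2}
\end{equation*}
on $[0,T]\times\R$, with $\mu'=\min\{2\mu_0/(1+8\mu_0 T),\ \mu_1/(1+4\mu_1 T)\}>0$ and $C'$ depending on $T$; taking square roots gives $|u(t,\theta)|=\sqrt{w(t,\theta)}\le C e^{-\mu\theta^2}$ with $C=\sqrt{C'}$, $\mu=\mu'/2$. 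The case $t=0$ is the initial bound itself.

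The only genuinely delicate point is \textbf{justifying that $w=u^2$ is itself an admissible test function for Lemma~\ref{lemma:u<e^(Ct)*gaussian}}, i.e.\ that it has the $C^{1;2}$ regularity and a (two-sided) Gaussian growth control on the closed slab $[0,T]$ with $T<T^*$; this is immediate from $u\in X_{T^*}$ and the algebra of $C^{1;2}$ functions, but one should also be slightly careful that the lemma is stated on $[0,T]$ whereas $X_{T^*}$ only gives interior $C^{1;2}$ regularity on $(0,T^*)$ — this is resolved by choosing $T<T'<T^*$ and working on $[0,T]$ inside the open interval $(0,T')$, or by noting $u\in C^0([0,T^*)\times\R)$ so the hypotheses of the lemma (which only require $C^{1;2}$ on $(0,T]$) are met. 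A minor secondary point is handling the sign of $\rho_u(t)$: before positivity of $u$ is established one cannot claim $-2u^2\rho_u\le 0$, so at this stage one instead uses that $t\mapsto\rho_u(t)$ is locally bounded (part of the definition of $X_{T^*}$), writes $2u\big(r(\theta)-\rho_u(t)\big)u$, and absorbs $-2\rho_u(t)$ into $A$, which is harmless since it only shifts the exponential prefactor $e^{At}$. Everything else is the routine Gaussian bookkeeping already packaged inside Lemma~\ref{lemma:u<e^(Ct)*gaussian}.
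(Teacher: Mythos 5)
Your approach is essentially the one the paper uses: set $w=u^2$, show $w$ satisfies a parabolic differential inequality with a coefficient that is bounded above and has at most quadratic growth, invoke Lemma~\ref{lemma:u<e^(Ct)*gaussian}(1), and take square roots. The Gaussian bookkeeping, the use of the variable coefficient $a(t,\theta)=2r(\theta)$ (rather than the constant the paper uses), and the remark about $Q_1$ are all fine.

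There is, however, one genuine gap. To pass from
\[
\dt w - \dthth w \;=\; 2r(\theta)\,w - 2\rho_u(t)\,w - 2uf(u) - 2(\dth u)^2
\]
to $\dt w - \dthth w \le 2r(\theta)\,w$ you discard $-2uf(u)$ on the grounds that $f\ge 0$ and $u\ge 0$. But nonnegativity of $u$ is Proposition~\ref{prop:u positive}, which in the paper's logical ordering comes \emph{after} the Gaussian estimate and cannot be assumed here without risking circularity. You in fact notice the same issue for the $-2\rho_u w$ term and correctly propose the fix (absorb $|\rho_u|$, locally bounded by the definition of $X_{T^*}$, into the coefficient $A$), but you never apply the analogous fix to the $f$-term, so your primary argument is incomplete. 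The paper's remedy is to use Lipschitz continuity of $f$ (a consequence of Assumption~\ref{ass:f}) to write $|uf(u)|\le C_{\mathrm{Lip}}\,u^2 = C_{\mathrm{Lip}}\,w$, and to bound $-2\rho_u(t)\,w \le 2M\,w$ with $M$ a local bound on $\rho_u$; these are then absorbed into the constant $A=2(\rmax + M + C_{\mathrm{Lip}})$. With that single modification, your proof coincides with the paper's.
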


In particular, $u(t, \theta) \to 0$ as $|\theta| \to +\infty$, uniformly in time over $[0, T]$.  This will allow us to compare $u$ with functions that vanish at infinity in $\theta$, using the comparison theorem for parabolic partial differential equations. Thanks to this estimate, we also immediately obtain the continuity of $t\mapsto \rho_u(t)$ on $[0,T]$ by applying the dominated convergence theorem.

\begin{proof}
    Since $u\in C^{1;2}_{t,\theta}((0,T^*)\times\R)$, we may consider the parabolic equation satisfied by $u^2$, using the main equation (\ref{eq:main}):
    \begin{equation*}
        \dt (u^2) - \dthth(u^2) = 2u\dt u - 2u\dthth u - 2 (\dth u)^2 \leq 2u(\dt u - \dthth u) = 2r(\theta)u^2 - 2\rho_u(t) u^2 - 2uf(u).
    \end{equation*}
First, we observe that $|u f(u)| \le C_{Lip} u^2$, where $C_{Lip} > 0$ is the Lipschitz constant of $f$, as given by Assumption~\ref{ass:f}. Next, since $u \in X_{T^*}$, the function $\rho_u$ is locally bounded on $[0, T^*)$. Let $M > 0$ be a bound for $\rho_u$ on $[0, T]$. Assumption~\ref{ass:r} further implies that $r$ is bounded from above by some constant $\rmax > 0$. Putting everything together, we obtain:
    \begin{equation*}
        \dt (u^2) - \dthth(u^2) \leq 2(\rmax + M + C_{Lip})u^2.
    \end{equation*}
    We note that, according to Assumption \ref{ass:u0}, we have $u_0^2(\theta) \leq C_0^2 e^{-2\mu_0\theta^2}$.
    Since $u\in X_{T^*}$, $u^2$ verifies the hypotheses of Lemma \ref{lemma:u<e^(Ct)*gaussian}.
    Applying the first case of the lemma to $u^2$, we find $u^2(t,\theta) \leq e^{2(\rmax + M + C_{Lip})T} C_0^2 e^{-\frac{2\mu_0 \theta^2}{1+8 \mu_0 T}}$, and thus:
    \begin{equation*}
        |u(t,\theta)| \leq e^{(\rmax + M + C_{Lip})T} C_0 e^{-\frac{\mu_0 \theta^2}{1+8\mu_0 T}}
    \end{equation*}
    for all $(t,\theta)\in[0,T]\times\R$, which is the estimate we sought.
\end{proof}

\begin{prop} \label{prop:u positive}
    Necessarily, $u\ge 0$. If $u_0\not\equiv 0$, we have $u>0$ in $(0,T^*)\times\R$.
\end{prop}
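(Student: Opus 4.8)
The plan is to recast \eqref{eq:main} as a \emph{linear} parabolic equation for $u$ with a zeroth-order coefficient that is bounded from above and grows at most quadratically, and then to invoke the weak and strong parabolic maximum principles. Since $f\in C^1(\R)$ with $f(0)=0$, the fundamental theorem of calculus gives $f(u(t,\theta))=u(t,\theta)\,g(t,\theta)$ with $g(t,\theta):=\int_0^1 f'\!\bigl(s\,u(t,\theta)\bigr)\,ds$, a continuous function bounded by $\|f'\|_{L^\infty}$ thanks to Assumption~\ref{ass:f}. Hence $u$ solves
\[
\dt u-\dthth u=c(t,\theta)\,u,\qquad c(t,\theta):=r(\theta)-\rho_u(t)-g(t,\theta),
\]
and, because $u\in X_{T^*}$, the map $\rho_u$ is bounded on each $[0,T]$ with $T<T^*$; combined with $r\le\rmax$ and $r(\theta)\ge -C(1+\theta^2)$ from Assumption~\ref{ass:r}, this makes $c$ continuous with $c\le A$ and $|c(t,\theta)|\le C_a(1+\theta^2)$ on $[0,T]\times\R$ for suitable constants $A,C_a>0$.

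For nonnegativity I would apply the first item of Lemma~\ref{lemma:u<e^(Ct)*gaussian} to $v:=-u$. This $v$ lies in $C^0([0,T]\times\R)\cap C^{1;2}_{t;\theta}((0,T]\times\R)$, satisfies the required Gaussian growth bound since $u\in X_{T^*}$, has $v(0,\cdot)=-u_0\le 0\le C_0 e^{-\mu_0\theta^2}$ for every $C_0,\mu_0>0$, and obeys $\dt v-\dthth v=c\,v\le c\,v+Q_1 e^{-\theta^2}$ for every $Q_1>0$. The lemma then yields $v(t,\theta)\le e^{At}\bigl[C_0 e^{-\mu_0\theta^2/(1+4\mu_0 t)}+TQ_1 e^{-\theta^2/(1+4t)}\bigr]$, and letting $C_0,Q_1\downarrow 0$ gives $v\le 0$, i.e. $u\ge 0$ on $[0,T]\times\R$; since $T<T^*$ is arbitrary, $u\ge 0$ on $[0,T^*)\times\R$. (Alternatively one could invoke directly the maximum principle on $\R$ for equations with quadratically growing coefficients, e.g. \cite[Chapter~2, Theorem~9]{friedman-parabolic}, using the Gaussian envelope of Proposition~\ref{prop:u<gaussian}.)

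For strict positivity when $u_0\not\equiv 0$, I would argue by contradiction: suppose $u(t_0,\theta_0)=0$ for some $t_0\in(0,T^*)$ and $\theta_0\in\R$. Fix $R>|\theta_0|$; on the bounded cylinder $Q:=(0,t_0]\times(-R,R)$ the coefficient $c$ is bounded, so with $\lambda:=\|c\|_{L^\infty(Q)}$ the function $w:=e^{\lambda t}u\ge 0$ satisfies $\dt w-\dthth w=(c+\lambda)w\ge 0$ on $Q$, that is, $w$ is a nonnegative supersolution of the heat equation attaining the value $0$ at $(t_0,\theta_0)$ with $\theta_0\in(-R,R)$. The strong maximum principle for parabolic equations then forces $w\equiv 0$ on $(0,t_0]\times(-R,R)$, hence $u\equiv 0$ there; since $R$ is arbitrary, $u\equiv 0$ on $(0,t_0]\times\R$, and continuity of $u$ at $t=0$ gives $u_0\equiv 0$, a contradiction. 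Therefore $u>0$ on $(0,T^*)\times\R$.

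The only genuinely delicate point is the bookkeeping forced by the non-boundedness of $c$: because $c$ inherits the quadratic decay of $r$, one cannot use the textbook maximum principle on $\R$ with bounded coefficients and must instead rely on the version tolerating quadratic growth (here conveniently packaged in Lemma~\ref{lemma:u<e^(Ct)*gaussian}) for the sign of $u$, and localize in $\theta$ on a finite cylinder before invoking the strong maximum principle for strict positivity. Everything else---the representation $f(u)=u\int_0^1 f'(su)\,ds$, local boundedness of $\rho_u$ from $u\in X_{T^*}$, and the passage $R\to\infty$---is routine.
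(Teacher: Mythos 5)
Your proof is correct and follows essentially the same route as the paper: both recast \eqref{eq:main} as a linear parabolic equation $\dt u-\dthth u=c(t,\theta)\,u$ (your $g(t,\theta)=\int_0^1 f'(su)\,ds$ is exactly the paper's $g(s)=f(s)/s$, $g(0)=f'(0)$) and invoke the weak and strong parabolic maximum principles. You are in fact slightly more careful than the paper's write-up, which asserts that $c$ is \emph{bounded} on $[0,T]\times\R$ --- this is inaccurate since $r$ is only bounded above and may decay quadratically --- whereas you correctly note that $c$ is bounded above with at most quadratic growth, which is precisely the setting covered by Lemma~\ref{lemma:u<e^(Ct)*gaussian} and by \cite[Ch.~2, Thm.~9]{friedman-parabolic}, and you localize to a bounded cylinder (where $c$ is genuinely bounded) before applying the strong maximum principle.
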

\begin{proof}
    Let $T\in(0,T^*)$. Set $c(t,\theta):=r(\theta) - \rho_u(t) - g(u(t,\theta))$, with $g(s):=f(s)/s$ for $s\in \R^*$ and $g(0)=f'(0)$. Then, we have $\dt u - \dthth u -c(t,\theta) u =0,$ and, since $u\in X_{T^*}$ and $f$ is Lipschitz-continuous, $c$ is continuous and bounded  in $[0,T] \times \R$. Since $u_0\geq 0$ the parabolic maximum principle implies  $u\ge 0$ in $[0,T] \times \R$. If $u_0 \not \equiv 0$, the strong parabolic maximum principle further implies that $u>0$ in $(0,T] \times \R$.
\end{proof}

\begin{lemma} \label{lemma: u < C rho}
    Let $\tau\in(0,T^*)$. For all $(t,\theta) \in [0, T^*-\tau)\times\R$, we have $u(t+\tau,\theta) \leq \frac{e^{\rmax\tau}}{\sqrt{4\pi\tau}}\rho_u(t)$.
\end{lemma}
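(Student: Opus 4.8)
The plan is to use the heat-kernel (Duhamel) representation of $u$ together with the sign conditions already established. By Proposition~\ref{prop:u positive} we know $u\ge 0$, and by Assumption~\ref{ass:f} we have $f\ge 0$; combining these with the bound $r\le\rmax$ from Assumption~\ref{ass:r}, the reaction term in \eqref{eq:main} satisfies
\[
r(\theta)\,u - u\,\rho_u(t) - f(u)\;\le\;\rmax\,u,
\]
since $\rho_u(t)\ge 0$ and $f(u)\ge 0$. Thus $u$ is a subsolution of $\partial_t v = \partial_{\theta\theta} v + \rmax\,v$ on $(0,T^*)\times\R$.

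Next I would fix $\tau\in(0,T^*)$ and $t\in[0,T^*-\tau)$ and compare $u$ on the time interval $[t,t+\tau]$ with the solution of the linear equation $\partial_t v = \partial_{\theta\theta} v + \rmax v$ starting at time $t$ from $u(t,\cdot)$. By the parabolic maximum principle (using that $u\in X_{T^*}$ provides the Gaussian growth control needed to apply the comparison principle on $[t,t+\tau]\times\R$, exactly as in Lemma~\ref{lemma:u<e^(Ct)*gaussian}), one gets for all $\theta\in\R$
\[
u(t+\tau,\theta)\;\le\; e^{\rmax\tau}\int_\R G(\tau,\theta-\eta)\,u(t,\eta)\,d\eta .
\]
Finally, bounding the heat kernel pointwise by its value at the origin, $G(\tau,\theta-\eta)\le G(\tau,0)=\frac{1}{\sqrt{4\pi\tau}}$, and integrating in $\eta$ gives
\[
u(t+\tau,\theta)\;\le\;\frac{e^{\rmax\tau}}{\sqrt{4\pi\tau}}\int_\R u(t,\eta)\,d\eta\;=\;\frac{e^{\rmax\tau}}{\sqrt{4\pi\tau}}\,\rho_u(t),
\]
which is the claimed estimate.

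The only point requiring a little care — and hence the main (mild) obstacle — is justifying the Duhamel/comparison step rigorously: $u$ is not a priori bounded, only Gaussian-bounded on $[0,T]\times\R$ with a constant $\gamma_T$ that could a priori exceed $\tfrac1{4\tau}$, so one cannot blindly assert that the integral against $G(\tau,\cdot)$ converges or that the representation formula holds. This is handled exactly as in the proof of Lemma~\ref{lemma:u<e^(Ct)*gaussian}: one applies the parabolic maximum principle on $[t,t+\tau]\times\R$ to the difference between $u$ and the explicit supersolution $\bar v(s,\theta)=e^{\rmax(s-t)}\int_\R G(s-t,\theta-\eta)u(t,\eta)\,d\eta$, noting that $u(t,\cdot)$ itself has Gaussian \emph{decay} by Proposition~\ref{prop:u<gaussian}, so $\bar v$ is well-defined and the comparison is legitimate for $\tau$ small; the general $\tau\in(0,T^*)$ then follows since the estimate, once proved, only involves $\rho_u(t)$ which is finite by definition of $X_{T^*}$. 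All remaining manipulations (the convolution bound, the Gaussian integral) are elementary.
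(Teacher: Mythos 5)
Your proof is correct and follows essentially the same route as the paper: both reduce to the differential inequality $\partial_t u - \partial_{\theta\theta} u \le \rmax u$ (using nonnegativity of $u$, $f$, and $\rho_u$), compare $u$ with the explicit heat-kernel supersolution $e^{\rmax(s-t)}\,G(s-t,\cdot)*u(t,\cdot)$, and then bound the Gaussian kernel pointwise by its peak $\tfrac{1}{\sqrt{4\pi\tau}}$. Your closing remark about justifying the comparison step is the right concern, and Proposition~\ref{prop:u<gaussian} (Gaussian \emph{decay}, not just Gaussian growth) resolves it for all $\tau\in(0,T^*)$ at once, so the parenthetical restriction to small $\tau$ is unnecessary.
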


\begin{proof}
    Let $\tau\in(0,T^*)$ and $t_0\in[0,T^*-\tau)$. We define $\tilde{u}(t,\theta) := e^{\rmax (t-t_0)} \int_\R G(t-t_0,\theta-\theta') u(t_0,\theta') d\theta'$ for all $(t,\theta)\in(t_0,T^*)\times\R$, which is a solution of the parabolic system:
    \begin{equation*}
        \left\{
        \begin{aligned}
            &\dt \tilde{u} - \dthth\tilde{u} = \rmax \tilde{u},
            \ \ \ \ \ (t,\theta) \in (t_0,T^*)\times\R,
            \\
            &\tilde{u}(t_0,\theta) = u(t_0,\theta),
            \ \ \ \ \ \ \ \  \theta \in \R.
        \end{aligned}
        \right.
    \end{equation*}
    We note that, using the non-negativity of $u$ and $f$ given by Proposition \ref{prop:u positive} and Assumption \ref{ass:f} respectively, we get from the main equation (\ref{eq:main}):
    \begin{equation*}
    		\dt u - \dthth u \leq \rmax u,
            \ \ \ \ \ (t,\theta) \in (t_0,T^*)\times\R
    \end{equation*}
It follows from the comparison principle that $u \leq \tilde{u}$ over $(t_0,T^*)\times\R$.
    From the definitions of $G$ and $\tilde{u}$, we can deduce the following estimate for all $\theta\in\R$:
    \begin{align*}
        u(t_0+\tau,\theta)
        &\leq e^{\rmax\tau} \int_\R G(\tau, \theta-\theta') u(t_0,\theta')d\theta'
      \leq \frac{e^{\rmax\tau}}{\sqrt{4\pi\tau}} \int_\R u(t_0,\theta')d\theta'
        = \frac{e^{\rmax\tau}}{\sqrt{4\pi\tau}} \rho_u(t_0),
    \end{align*}
    which is the estimate we wanted for all $t_0 \in [0,T^*-\tau)$.
\end{proof}

In order to describe the dynamics of $\rho$, if we formally integrate \eqref{eq:main} over $\theta \in \R$, we get:
\begin{equation} \label{eq:rho evolution}
    \rho'(t) = \int_\R r(\theta)u(t,\theta)d\theta - \rho(t)^2 - \int_\R f(u(t,\theta))d\theta,
    \ \ \ \ \ \text{ for all } t>0.
\end{equation}
To justify doing so, we must first prove the following control on the derivatives of $u$ when $\abs{\theta}\to\infty$.

\begin{prop} \label{prop:du<gaussian}
    For every $T_1,T_2\in(0,T^*)$ where $T_1<T_2$, there exist $C,\mu>0$ such that:
    \begin{equation*}
        \abs{\dth u(t,\theta)} + \abs{\dthth u(t,\theta)} + \abs{\dt u(t,\theta)} \leq C e^{-\mu \theta^2},
    \end{equation*}
    for all $(t,\theta) \in [T_1,T_2]\times\R$.
\end{prop}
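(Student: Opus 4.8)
The plan is to work on a time slab $[t_0,T_2]\times\R$ for a fixed $t_0\in(0,T_1)$ and to represent $u$ by Duhamel's formula
\[
u(t,\theta)=\int_\R G(t-t_0,\theta-\eta)\,u(t_0,\eta)\,d\eta+\int_{t_0}^t\!\!\int_\R G(t-s,\theta-\eta)\,F(s,\eta)\,d\eta\,ds,
\]
where $F(s,\eta):=r(\eta)\,u(s,\eta)-\rho_u(s)\,u(s,\eta)-f(u(s,\eta))$. By Proposition~\ref{prop:u<gaussian} applied on $[0,T_2]$, we have $u(s,\eta)\le Ce^{-\mu\eta^2}$ on $[0,T_2]\times\R$; combining this with the at most quadratic bounds on $r$ (Assumption~\ref{ass:r}), the local boundedness of $\rho_u$ (since $u\in X_{T^*}$), and $|f(s)|\le C_{Lip}|s|$ (Assumption~\ref{ass:f}), one gets $|F(s,\eta)|\le C_Fe^{-\mu_F\eta^2}$ on $[0,T_2]\times\R$ for some $C_F,\mu_F>0$. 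Since $u\in C^{1;2}_{t;\theta}$ with controlled growth, this representation is legitimate and can be differentiated in $\theta$.

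The gradient estimate comes from writing $\partial_\theta G(\tau,z)=-\tfrac{z}{2\tau}G(\tau,z)$ and estimating convolutions $\int_\R|\partial_\theta G(\tau,\theta-\eta)|\,e^{-\mu_F\eta^2}\,d\eta$ by a near/far splitting in $\eta$. On $\{|\eta|\le|\theta|/2\}$ one has $|\theta-\eta|\ge|\theta|/2$, so, after integrating $e^{-\mu_F\eta^2}$, this piece is bounded by $\tfrac{C|\theta|}{\tau}e^{-\theta^2/(c\tau)}$; on $\{|\eta|>|\theta|/2\}$ one has $e^{-\mu_F\eta^2}\le e^{-\mu_F\theta^2/4}$ while $\int_\R|\partial_\theta G(\tau,z)|\,dz\le C\tau^{-1/2}$. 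For the initial-data term, $\tau=t-t_0$ stays in a compact subinterval of $(0,\infty)$, so both pieces are immediately Gaussian in $\theta$; for the Duhamel term, $\tau=t-s$ ranges down to $0$, but $\tau^{-1/2}$ is integrable near $s=t$ and, via the substitution $v=\theta^2/(c\tau)$, $\int_0^{T_2}\tfrac{C|\theta|}{\tau}e^{-\theta^2/(c\tau)}\,d\tau=\int_{\theta^2/(cT_2)}^{\infty}\tfrac{C|\theta|}{v}e^{-v}\,dv\le Ce^{-\theta^2/(cT_2)}$. Summing the contributions yields $|\partial_\theta u(t,\theta)|\le Ce^{-\mu\theta^2}$ on $[T_1,T_2]\times\R$, and — redoing the argument with a smaller starting time — on $[T_1/2,T_2]\times\R$.

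For $\partial_{\theta\theta}u$, a second $\theta$-derivative cannot be placed directly on $G$ inside the Duhamel integral, since $\int_\R|\partial_{\theta\theta}G(\tau,\cdot)|\sim\tau^{-1}$ is not integrable near $s=t$. Instead I would integrate by parts once in $\eta$ to move a derivative onto $F$ (boundary terms vanishing by the Gaussian decay), obtaining $\partial_\eta F=r'(\eta)u+(r(\eta)-\rho_u(s)-f'(u))\,\partial_\eta u$, which again carries a Gaussian envelope on $[T_1/2,T_2]\times\R$ — here one uses the gradient bound just obtained, the boundedness of $f'$ (Assumption~\ref{ass:f}), and the fact that $|r'(\theta)|e^{-\mu\theta^2}$ is bounded for every $\mu>0$ (Assumption~\ref{ass:r}). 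One more $\theta$-derivative of $G$ and the same near/far splitting then give $|\partial_{\theta\theta}u(t,\theta)|\le Ce^{-\mu\theta^2}$ on $[T_1,T_2]\times\R$. Finally, the bound on $\partial_t u$ is free from the equation $\partial_t u=\partial_{\theta\theta}u+r(\theta)u-\rho_u(t)u-f(u)$, every right-hand term being dominated by a Gaussian on $[T_1,T_2]\times\R$. The main obstacle is the pair of heat-kernel convolution estimates: squeezing genuine Gaussian (not merely polynomial) decay out of the near-field piece while absorbing the time singularity of $\partial_\theta G$ at $s=t$, and arranging the integration-by-parts step so the second-derivative estimate rests on the gradient bound established beforehand.
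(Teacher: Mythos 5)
Your proposal is essentially the paper's proof: Duhamel representation with the Gaussian-bounded source $F$ (via Proposition~\ref{prop:u<gaussian} and Assumptions~\ref{ass:r}, \ref{ass:f}), a near/far heat-kernel split to extract Gaussian decay of $\dth u$, a single integration by parts keyed on the gradient bound over the slightly larger slab $[T_1/2,T_2]$ to control $\dthth u$, and the equation for $\dt u$. The remaining differences — starting Duhamel at $t_0>0$ rather than at $0$, cutting on $|\eta|\lessgtr|\theta|/2$ rather than on $|\theta-\eta|\lessgtr|\theta|/2$, and the inconsequential arithmetical slip that $\partial_\theta G$ gives a factor $\tau^{-3/2}$ rather than $\tau^{-1}$ in your near-field bound (the $\tau$-integral still converges and still yields Gaussian decay) — are all cosmetic.
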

Thanks to Proposition \ref{prop:du<gaussian}, we can directly deduce (\ref{eq:rho evolution}) by integrating the main equation (\ref{eq:main}) with respect to $\theta$. Indeed, we can apply the dominated convergence theorem to find $\int_\R \dt u(t,\theta)d\theta=\rho'(t)$, as well as the estimate on $\dth u$ to get $\int_\R \dthth u(t,\theta)d\theta = 0$ for all $t\in(0,T^*)$.

\begin{proof}
    Let $T_1,T_2\in(0,T^*)$ such that $T_1<T_2$.
    Define the function $F(t,\theta) := r(\theta)u(t,\theta) - \rho_u(t) u(t,\theta) - f(u(t,\theta))$ for all $(t,\theta)\in[T_1,T_2]\times\R$. Due to the continuity of $\rho_u$, the regularity of $u$, $r$, and $f$, and the non-negativity of $u$ given by Proposition \ref{prop:u positive} (used here, as we only assume that $f\in C^1([0,+\infty))$), we get that $F$ and its derivative $\dth F$ are continuous over $[T_1,T_2]\times\R$.
    $F$ is therefore locally Lipschitz in $\theta\in\R$, uniformly in $t\in[T_1,T_2]$.
    Furthermore, $r$ is bounded by a quadratic function and $u$ is bounded by a Gaussian function in $\theta$ according to Assumption \ref{ass:r} and Proposition \ref{prop:u<gaussian}. Therefore, using the Lipschitz continuity of $f$ and the boundedness of $\rho_u$ over $[T_1,T_2]$, we can bound $F(t,\theta)$ by a Gaussian function $C_F e^{-\mu_F\theta^2}$ for some $C_F,\mu_F>0$ and for all $(t,\theta)\in[T_1,T_2]\times\R$.
    Thanks to existence and uniqueness results of solutions of parabolic differential equations,  \cite[Chapter 1, Theorems 12 and 16]{friedman-parabolic}, we can now assert that the solution $u$ to the problem (\ref{eq:main}) can be expressed as:
    \begin{equation*}
        u(t,\theta) = \int_\R G(t,\theta-\eta) u_0(\eta)d\eta + \int_0^t \int_\R G(t-s,\theta-\eta) F(s,\eta)d\eta ds,
    \end{equation*}
    and its derivative with respect to $\theta$ is given by:
    \begin{equation} \label{eq:du (proof:du<gaussian)}
        \dth u(t,\theta) = \int_\R \dth G(t,\theta-\eta) u_0(\eta)d\eta + \int_0^t \int_\R \dth G(t-s,\theta-\eta) F(s,\eta)d\eta ds,
    \end{equation}
    for all $(t,\theta)\in[T_1,T_2]\times\R$. 
    We note that $\abs{\dth G(t,\theta)} = \frac{\abs{\theta}}{2t\sqrt{4\pi t}} e^{-\frac{\theta^2}{4t}} \leq \frac{1}{t\sqrt{2\pi}}e^{-\frac{\theta^2}{8t}}$, where we bounded $\frac{|\theta|}{\sqrt{8t}}e^{-\frac{\theta^2}{8t}}$ by $1$, since $xe^{-x^2}\leq 1$ for all $x\geq0$.
    We can therefore estimate the first integral of (\ref{eq:du (proof:du<gaussian)}) for all $(t,\theta)\in[T_1,T_2]\times\R$ as follows:
    \begin{align}
        \nonumber
        \abs{\int_\R \dth G(t,\theta-\eta) u_0(\eta)d\eta}
        &\leq \frac{C_0}{t\sqrt{2\pi}} \int_\R e^{-\frac{(\theta-\eta)^2}{8t}} e^{-\mu_0\eta^2} d\eta
        \\
        %\label{eq:dG*u0 (proof:du<gaussian)}
       % &\leq \frac{C_0}{\sqrt{2t(1+8\mu_0t)}} e^{-\frac{\mu_0\theta^2}{1+8\mu_0t}}
       % \leq \frac{C_0}{\sqrt{2T_1(1+8\mu_0T_1)}} e^{-\frac{\mu_0\theta^2}{1+8\mu_0T_2}},
           \label{eq:dG*u0 (proof:du<gaussian)}
        &\leq \frac{2C_0}{\sqrt{t(1+8\mu_0t)}} e^{-\frac{\mu_0\theta^2}{1+8\mu_0t}}
        \leq \frac{2C_0}{\sqrt{T_1(1+8\mu_0T_1)}} e^{-\frac{\mu_0\theta^2}{1+8\mu_0T_2}},
    \end{align}
    where $C_0,\mu_0>0$ are the constants given in Assumption \ref{ass:u0}, 
    and dealing with the convolution of Gaussian functions as we did in the proof of Lemma \ref{lemma:u<e^(Ct)*gaussian}.
    To deal with the second term of (\ref{eq:du (proof:du<gaussian)}), we fix $(t,\theta)\in[T_1,T_2]\times\R$ and split the integral over $\R$ into two parts:
    \begin{align*}
        \int_\R \dth G(t-s,\theta-\eta) F(s,\eta)d\eta
        &= \int_{|\theta-\eta|\leq|\theta|/2} \dth G(t-s,\theta-\eta) F(s,\eta)d\eta
        \\
        &\ \ \ + \int_{|\theta-\eta|>|\theta|/2} \dth G(t-s,\theta-\eta) F(s,\eta)d\eta,
    \end{align*}
    for all $s\in(0,t)$. The condition $|\theta-\eta|\leq \frac{|\theta|}{2}$ implies $|\eta|\geq\frac{|\theta|}{2}$, and we can thus bound the first integral as follows:
    \begin{align}
        \nonumber
        \abs{\int_{|\theta-\eta|\leq|\theta|/2} \dth G(t-s,\theta-\eta) F(s,\eta)d\eta}
        \leq C_F \int_{|\theta-\eta|\leq|\theta|/2} | \dth G(t-s,\theta-\eta) | e^{-\mu_F\eta^2}d\eta
        &
        \\
        \label{eq:theta-eta <= theta/2 (proof:du<gaussian)}
        \leq C_F e^{-\frac{\mu_F \theta^2}{4}} \frac{1}{(t-s)\sqrt{2\pi}} \int_\R e^{-\frac{(\theta-\eta)^2}{8(t-s)}}d\eta
        \leq \frac{2C_F}{\sqrt{t-s}} e^{-\frac{\mu_F \theta^2}{4}}.
        &
    \end{align}
    For the integral over $\{|\theta-\eta|>\frac{|\theta|}{2}\}$, we simply use the fact that $F$ is bounded by $C_F$ over $[T_1,T_2]\times\R$, and we have:
    \begin{align}
        \nonumber
        \abs{\int_{|\theta-\eta|>|\theta|/2} \dth G(t-s,\theta-\eta) F(s,\eta)d\eta}
        \leq \frac{C_F}{(t-s)\sqrt{2\pi}} \int_{|\theta-\eta|>|\theta|/2} e^{-\frac{(\theta-\eta)^2}{8(t-s)}} d\eta
    \end{align}
    Writing the integrand as $e^{-\frac{(\theta-\eta)^2}{8(t-s)}} = e^{-\frac{(\theta-\eta)^2}{16(t-s)}} e^{-\frac{(\theta-\eta)^2}{16(t-s)}}$, we can bound the first term in the product by $e^{-\frac{\theta^2}{64T_2}}$ when $|\theta-\eta|>\frac{|\theta|}{2}$ and $0<s<t\leq T_2$, and bound the integral of the second term by $\int_\R e^{-\frac{(\theta-\eta)^2}{16(t-s)}} d\eta = \sqrt{16(t-s)\pi}$, thus yielding us:
    \begin{equation} \label{eq:theta-eta > theta/2 (proof:du<gaussian)}
        \abs{\int_{|\theta-\eta|>|\theta|/2} \dth G(t-s,\theta-\eta) F(s,\eta)d\eta}
        \leq C_F\sqrt{\frac{8}{t-s}} e^{-\frac{\theta^2}{64T_2}}.
    \end{equation}
    Putting the estimates (\ref{eq:dG*u0 (proof:du<gaussian)}), (\ref{eq:theta-eta <= theta/2 (proof:du<gaussian)}), and (\ref{eq:theta-eta > theta/2 (proof:du<gaussian)}) back into (\ref{eq:du (proof:du<gaussian)}), we get:
    \begin{align*}
        \abs{\dth u(t,\theta)}
        \leq \frac{2C_0}{\sqrt{T_1(1+8\mu_0T_1)}} e^{-\frac{\mu_0\theta^2}{1+8\mu_0T_2}}
        + 4C_F\sqrt{T_2} e^{-\frac{\mu_F \theta^2}{4}}
        + 2C_F\sqrt{8T_2} e^{-\frac{\theta^2}{64T_2}},
    \end{align*}
    for all $(t,\theta)\in[T_1,T_2]\times\R$. Since all three of the terms on the right are Gaussian functions, we can bound all of them by a single Gaussian function $C_1 e^{-\mu_1\theta^2}$ for some $C_1,\mu_1>0$ and for all $\theta\in\R$, hence the inequality of Proposition \ref{prop:du<gaussian} for $\dth u$.

    Let us now prove the result for $\dthth u$. Defining $T_0=T_1/2$, for all $(t,\theta)\in[T_1,T_2]\times\R$, we can express $\dthth u$ as:
    \begin{equation} \label{eq:d2u (proof:du<gaussian)}
        \dthth u(t,\theta) = \int_\R \dthth G(t-T_0,\theta-\eta) u(T_0,\eta)d\eta + \int_{T_0}^t \int_\R \dthth G(t-s,\theta-\eta) F(s,\eta)d\eta ds.
    \end{equation}
    Note that $\abs{\dthth G(t,\theta)} \leq \frac{1}{\sqrt{4\pi t}}(\frac{1}{2t}e^{-\frac{\theta^2}{4t}} + \frac{\theta^2}{4t^2}e^{-\frac{\theta^2}{4t}}) \leq \frac{1}{\sqrt{4\pi t}}(\frac{1}{2t}e^{-\frac{\theta^2}{4t}} + \frac{2}{t}e^{-\frac{\theta^2}{8t}}) \leq \frac{5}{4t^{3/2}\sqrt{\pi}}e^{-\frac{\theta^2}{8t}}$, using the fact that $\frac{\theta^2}{8t}e^{-\frac{\theta^2}{8t}} \leq 1$ because $xe^{-x}\leq 1$ for all $x>0$. Therefore, we have for all $(t,\theta)\in[T_1,T_2]\times\R$:
    \begin{align}
        \nonumber
        \abs{\int_\R \dthth G(t-T_0,\theta-\eta) u(T_0,\eta) d\eta}
        &\leq \frac{5 C_{T_0}}{4(t-T_0)^{3/2}\sqrt{\pi}} \int_\R e^{-\frac{(\theta-\eta)^2}{8(t-T_0)}} e^{-\mu_{T_0} \eta^2} d\eta
        \\
        \label{eq:d2G*u0 (proof:du<gaussian)}
        &\leq \frac{5 C_{T_0}}{T_0\sqrt{2(1+8\mu_{T_0} T_0)}} e^{-\frac{\mu_{T_0} \theta^2}{1+8\mu_{T_0} (T_2-T_0)}},
    \end{align}
    since $t-T_0 \geq T_1-T_0=T_0$, where $C_{T_0},\mu_{T_0}$ are the constants given by Proposition \ref{prop:u<gaussian} such that $u(t,\theta)\leq C_{T_0} e^{-\mu_{T_0}\theta^2}$ for all $(t,\theta)\in [0,T_0]\times\R$.
    For the second term of (\ref{eq:d2u (proof:du<gaussian)}), we fix $(t,\theta)\in[T_1,T_2]\times\R$ and $s\in(T_0,t]$, and we wish to integrate by parts with respect to $\eta$. From the definition of $F$, we have $\dth F(s,\eta)=r'(\eta)u(s,\eta) + r(\eta)\dth u(s,\eta) - \rho_u(s)\dth u(s,\eta) - \dth u(s,\eta) f'(u(s,\eta))$.
    We can apply the previous arguments to the time interval $[T_0,T_2]$ and bound $\dth u(s,\eta)$ by a Gaussian function in $\eta\in\R$, uniformly in $s\in[T_0,T_2]$. Therefore, due to the bounds on $r$, $r'$, and $f'$ given by Assumptions \ref{ass:r} and \ref{ass:f}, we can also bound $\dth F(s,\eta)$ by a Gaussian function $C_F'e^{-\mu_F'\eta^2}$, for some $C_F',\mu_F'>0$ and all $(s,\eta)\in[T_0,T_2]\times\R$.
    We note that we can also bound $\eta \mapsto \dth G(t-s,\theta-\eta)F(s,\eta)$ by such a Gaussian function, and we can thus integrate by parts:
    \begin{equation*}
        \int_\R \dthth G(t-s,\theta-\eta) F(s,\eta)d\eta = - \int_\R \dth G(t-s,\theta-\eta) \dth F(s,\eta)d\eta. 
    \end{equation*}
    Applying the same arguments we did for $\dth u$, we find the bound:
    \begin{equation} \label{eq:dG*dF<gaussian (proof:du<gaussian)}
        \abs{\int_{T_0}^t \int_\R \dth G(t-s,\theta-\eta) \dth F(s,\eta)d\eta d s }
        \leq 4C_F'\sqrt{T_2} e^{-\frac{\mu_F' \theta^2}{4}}
        + 2C_F'\sqrt{8T_2} e^{-\frac{\theta^2}{64T_2}},
    \end{equation}
    for all $(t,\theta)\in[T_1,T_2]\times\R$.
    Inserting the estimates (\ref{eq:d2G*u0 (proof:du<gaussian)}) and (\ref{eq:dG*dF<gaussian (proof:du<gaussian)}) into (\ref{eq:d2u (proof:du<gaussian)}), we can thus bound $\dthth u(t,\theta)$ by a single Gaussian function $C_2e^{-\mu_2\theta^2}$ for some $C_2,\mu_2>0$ and all $(t,\theta)\in[T_1,T_2]\times\R$, and we have proven the result for $\dthth u$.
    
    Finally, to prove the inequality of Proposition \ref{prop:du<gaussian} for $\dt u$, we can simply use the main equation (\ref{eq:main}) to get:
    \begin{equation*}
        \abs{\dt u(t,\theta)}
        \leq \abs{\dthth u(t,\theta)} + \abs{F(t,\theta)} \leq C_2e^{-\mu_2\theta^2} + C_Fe^{-\mu_F\theta^2}
        \leq 2\max(C_2,C_F) e^{-\min(\mu_2,\mu_F)\theta^2},
    \end{equation*}
    for all $(t,\theta)\in[T_1,T_2]\times\R$, thus concluding the proof.
\end{proof}

\begin{prop} \label{prop: u < M, rho < M'}
    Let $M_0,M_0'>0$ such that $u_0(\theta) \leq M_0$ for all $\theta\in\R$ and $\rho_{u_0} \leq M_0'$. There exist $M,M'>0$ such that, for all $(t,\theta) \in [0,T^*)\times\R$:
    \begin{equation*}
        u(t,\theta) \leq M,
        \ \ \ \ \ 
        \rho_u(t) \leq M',
    \end{equation*}
    where we can choose $M'=\max(M_0',\rmax)$, and $M$ depends only on $\rmax$, $M_0$, and $M_0'$.
\end{prop}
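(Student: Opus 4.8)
The plan is to first derive a global-in-time bound on $\rho_u$ from the evolution equation \eqref{eq:rho evolution}, and then transfer it to $u$ by combining a short-time heat-kernel estimate with Lemma~\ref{lemma: u < C rho}.

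\emph{Step 1 (bounding $\rho_u$).} Recall that $\rho_u$ is continuous on $[0,T^*)$ by Proposition~\ref{prop:u<gaussian}, is $C^1$ on $(0,T^*)$, and satisfies \eqref{eq:rho evolution} there (as justified after Proposition~\ref{prop:du<gaussian}). I would use $u\ge 0$ (Proposition~\ref{prop:u positive}), $f\ge 0$ (Assumption~\ref{ass:f}) and $r(\theta)\le\rmax$ to turn \eqref{eq:rho evolution} into the logistic differential inequality
\[
\rho_u'(t)\ \le\ \rho_u(t)\bigl(\rmax-\rho_u(t)\bigr),\qquad t\in(0,T^*).
\]
Then $M':=\max(M_0',\rmax)$ is a barrier: a standard invariant-region argument (if $\rho_u$ reached a value $>M'\ge\rmax$ it would satisfy $\rho_u'<0$ there, hence be strictly decreasing, which prevents it from ever crossing the level $M'$), together with $\rho_u(0)=\rho_{u_0}\le M_0'\le M'$ and continuity at $t=0$, gives $\rho_u(t)\le M'$ on $[0,T^*)$. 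This bound depends only on $M_0'$ and $\rmax$.

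\emph{Step 2 (bounding $u$).} For small times I would drop $\rho_u u$ and $f(u)$ (both $\ge 0$) from \eqref{eq:main} to get $\partial_t u-\partial_{\theta\theta}u\le\rmax u$, and then repeat the comparison of Lemma~\ref{lemma: u < C rho} but from $t_0=0$, keeping $u_0$ under the heat kernel instead of estimating it by $\rho_{u_0}$:
\[
u(t,\theta)\ \le\ e^{\rmax t}\int_\R G(t,\theta-\theta')\,u_0(\theta')\,d\theta'\ \le\ e^{\rmax t}M_0,\qquad(t,\theta)\in[0,T^*)\times\R.
\]
This controls $u$ by $e^{\rmax}M_0$ on $[0,\min(1,T^*))\times\R$. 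For $t\ge 1$ (when $T^*>1$) I would instead invoke Lemma~\ref{lemma: u < C rho} with $\tau=1$, writing $u(t,\theta)=u((t-1)+1,\theta)\le \tfrac{e^{\rmax}}{\sqrt{4\pi}}\rho_u(t-1)\le \tfrac{e^{\rmax}}{\sqrt{4\pi}}M'$. Setting $M:=\max\!\left(e^{\rmax}M_0,\ \tfrac{e^{\rmax}}{\sqrt{4\pi}}\max(M_0',\rmax)\right)$, which depends only on $\rmax$, $M_0$ and $M_0'$, yields $u\le M$ everywhere on $[0,T^*)\times\R$, and the proof is complete by also taking $M'=\max(M_0',\rmax)$ from Step~1.

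\emph{Main obstacle.} There is no deep difficulty, but two points are essential. The crucial one is that Step~1 must produce a bound independent of $T^*$: this is exactly what the logistic (rather than merely linear) structure of \eqref{eq:rho evolution} provides, and without it the naive estimate $u\le e^{\rmax t}M_0$ degrades as $t\to+\infty$ and cannot close the argument. The second is that Lemma~\ref{lemma: u < C rho} alone cannot handle $t$ near $0$, since its constant $\tfrac{e^{\rmax\tau}}{\sqrt{4\pi\tau}}$ blows up as $\tau\to 0$; this is why the short-time/long-time split in Step~2 is genuinely needed, with the short-time regime handled through $\|u_0\|_{\infty}$. Throughout, the a priori Gaussian decay and positivity from the earlier propositions are what justify integrating \eqref{eq:main} in $\theta$ and applying the maximum/comparison principles on $\R$.
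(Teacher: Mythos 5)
Your proof is correct and follows essentially the same route as the paper: derive the logistic differential inequality $\rho_u'\le\rho_u(\rmax-\rho_u)$ from \eqref{eq:rho evolution}, bound $\rho_u$ by an ODE comparison/invariant-region argument, then split times into $t<1$ (heat-kernel comparison giving $u\le e^{\rmax t}M_0$) and $t\ge 1$ (Lemma~\ref{lemma: u < C rho} with $\tau=1$). The only cosmetic difference is your phrasing of the barrier argument for $\rho_u$ versus the paper's appeal to ``comparison arguments for ODEs''; the substance is identical.
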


\begin{proof}  Thanks to Proposition \ref{prop:du<gaussian}, we can integrate \eqref{eq:main} over $\theta \in \R$ to reach \eqref{eq:rho evolution} and, thus, $\rho_u' \leq \rmax \rho_u - \rho_u^2$ together with $\rho_u(0)=\rho_{u_0}\leq M_0'$. Conclusion $\rho _u\leq M':=\max(M_0',r_{max})$ thus follows from comparison arguments for ODEs.

  Let us now prove the boundedness of $u$. Since $\partial _t u-\partial_{\theta \theta}u \leq r_{max}u$ it follows from the comparison principle that $u(t,\theta)\leq e^{r_{max}t}M_0 \leq e^{\rmax}M_0$ for all $(t,\theta) \in [0,\max\{T^*,1\})\times\R$. If $T^*>1,$  we  use Lemma \ref{lemma: u < C rho} with $\tau = 1$ to get $u(t+1,\theta) \leq \frac{e^{\rmax}}{\sqrt{4\pi}} \rho_u(t) \leq C M'$ for all $(t,\theta)\in(0,T^*-1)\times\R$, where $C$ depends only on $\rmax$. Choosing $M := \max(e^{\rmax}M_0, CM')$, we get the bound on $u$.
\end{proof}

\subsection{Proof of Theorem \ref{th:well-pos}}\label{ss:well-po}

\begin{prop} \label{prop:uniqueness}
 Let $u_0$ satisfy Assumption \ref{ass:u0}. There is at most one solution $u$ to (\ref{eq:main}), starting from $u_0$, such that $u \in X_{T^*}$.
\end{prop}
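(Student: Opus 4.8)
My plan is to show that the difference $w:=u-v$ of two solutions of \eqref{eq:main} in $X_{T^*}$ with the same initial datum vanishes, first on a short time interval and then, by a continuation argument, on all of $[0,T^*)$. Subtracting the two copies of \eqref{eq:main} and using the identity $u\rho_u-v\rho_v=w\,\rho_v+u\,\rho_w$, where $\rho_w:=\rho_u-\rho_v$, the function $w$ solves
\[
\dt w-\dthth w=\big(r(\theta)-\rho_v(t)\big)\,w-u\,\rho_w-\big(f(u)-f(v)\big),\qquad (t,\theta)\in(0,T^*)\times\R,
\]
with $w(0,\cdot)\equiv 0$. Fix $T\in(0,T^*)$. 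By Propositions~\ref{prop:u positive}, \ref{prop:u<gaussian} and~\ref{prop: u < M, rho < M'}, on $[0,T]$ one has $0\le u,v\le M$, $0\le\rho_u,\rho_v\le M'$ and $0\le u,v\le Ce^{-\mu\theta^2}$ for suitable constants. Hence $w$ fits the framework of the second item of Lemma~\ref{lemma:u<e^(Ct)*gaussian}: the coefficient $a:=r-\rho_v$ is continuous, bounded above by $\rmax$ and quadratically bounded by Assumption~\ref{ass:r}; the term $-(f(u)-f(v))$ is absorbed into the $B|w|$ term since $|f(u)-f(v)|\le C_{Lip}|w|$; the nonlocal term satisfies $|u\,\rho_w|\le Ce^{-\mu\theta^2}\,\|\rho_w\|_{L^\infty([0,T])}$, an admissible Gaussian source; and $\limsup_{|\theta|\to\infty}w(t,\theta)=0$ uniformly in $t\in[0,T]$. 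The same holds for $-w$, which obeys the analogous inequality with identical constants.

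The heart of the argument is then a self-improving estimate. I would freeze the Gaussian constants $C,\mu$ once on the reference interval $[0,1]$, and for $\tau\in(0,1]$ set $N:=\|\rho_w\|_{L^\infty([0,\tau])}$. Applying the second item of Lemma~\ref{lemma:u<e^(Ct)*gaussian} to $w$ on $[0,\tau]$ with the initial bound $C_0\to 0^+$, and likewise to $-w$, yields
\[
\abs{w(t,\theta)}\le e^{(\rmax+C_{Lip})\tau}\,\tau\,C\,N\;e^{-\frac{\mu\theta^2}{1+4\mu\tau}},\qquad (t,\theta)\in[0,\tau]\times\R.
\]
Integrating in $\theta$ and taking the supremum over $t\in[0,\tau]$ gives $N\le\kappa(\tau)\,\tau\,N$, where $\kappa(\tau)=e^{(\rmax+C_{Lip})\tau}C\sqrt{\pi(1+4\mu\tau)/\mu}$ stays bounded, say by $\kappa_0$, as $\tau\to 0$. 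Choosing $\tau_0\in(0,1]$ with $\kappa_0\tau_0<1$ forces $N=0$, i.e.\ $\rho_w\equiv 0$ on $[0,\tau_0]$; inserting $N=0$ back into the displayed pointwise bound gives $w\equiv 0$ on $[0,\tau_0]\times\R$.

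Finally I would run a continuation argument. Let $S$ be the supremum of all $s\in[0,T^*)$ with $u\equiv v$ on $[0,s]\times\R$; the previous step shows $S\ge\tau_0>0$. If $S<T^*$, then by continuity $u\equiv v$ on $[0,S]\times\R$, and since $S>0$ the common value $u(S,\cdot)$ is $C^2$, Lipschitz (by Proposition~\ref{prop:du<gaussian}), nonnegative and Gaussian-bounded, hence satisfies Assumption~\ref{ass:u0}; applying the short-time step to the time-translated solutions $u(S+\cdot,\cdot)$, $v(S+\cdot,\cdot)$, which again belong to the relevant space $X$, produces $u\equiv v$ on $[S,S+\tau_0']\times\R$ for some $\tau_0'>0$, contradicting the maximality of $S$. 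Hence $S=T^*$. The delicate point, and the one that genuinely uses the structure of the equation, is closing the loop between $w$ and the nonlocal quantity $\rho_w$: it works only because $u\,\rho_w$ factorizes as (a Gaussian in $\theta$)$\times$($\rho_w$), so that applying the heat-kernel estimate and then integrating in $\theta$ produces the extra factor $\tau$ that can be absorbed for small $\tau$; one must also take care to keep the constants in the Gaussian envelope uniform as $\tau\to 0$ so that $\kappa(\tau)$ does not blow up, and to treat the sign of $w$ by applying the one-sided lemma to both $w$ and $-w$.
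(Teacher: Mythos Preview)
Your proof is correct and follows essentially the same route as the paper's own argument: subtract the two equations, isolate the nonlocal contribution as a Gaussian source with amplitude $\|\rho_w\|_{L^\infty}$, apply the second item of Lemma~\ref{lemma:u<e^(Ct)*gaussian} to both $w$ and $-w$, integrate in $\theta$ to obtain a self-referential inequality on $\|\rho_w\|_{L^\infty}$ that forces $\rho_w\equiv 0$ on a short interval, and conclude by continuation. The only cosmetic differences are that you split $u\rho_u-v\rho_v=w\rho_v+u\rho_w$ whereas the paper uses $w\rho_u+v\rho_w$, and your exponential factor $e^{(\rmax+C_{Lip})\tau}$ is slightly sharper than the paper's $e^{(\rmax+M'+C_{Lip})T}$ since you exploit $\rho_v\ge 0$.
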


\begin{proof}
    Let $u,v \in X_{T^*}$ be two solutions to the problem (\ref{eq:main}), and define $w:=u-v$. Then $w$ satisfies the equation $\dt w - \dthth w = r(\theta)w - \rho_u(t)w - \rho_w(t)v - (f(u)-f(v))$ for all $(t,\theta) \in (0,T^*)\times\R$.
    We note $M'$ a global bound of $\rho_u$ and $\rho_v$ given by Proposition \ref{prop: u < M, rho < M'}.
    Let $T\in(0,T^*)$. According to Proposition \ref{prop:u<gaussian}, we can choose $C,\mu>0$ such that $\abs{u(t,\theta)} + \abs{v(t,\theta)} \leq Ce^{-\mu\theta^2}$ for all $(t,\theta)\in[0,T]\times\R$.
    Since $\rho_u$ and $\rho_v$ are both bounded, $\rho_w$ is bounded as well and we have the following inequality:
    \begin{equation} \label{eq:w system (proof:uniqueness)}
        \dt w - \dthth w \leq (r(\theta)-\rho_u(t))w + C_{Lip}|w| + ||\rho_w||_{L^\infty([0,T])} Ce^{-\mu\theta^2},
    \end{equation}
    for all $(t,\theta)\in[0,T]\times\R$.
    We can apply the second case of Lemma \ref{lemma:u<e^(Ct)*gaussian} to $w$ and obtain the control:
    \begin{equation} \label{eq:w<gaussian (proof:uniqueness)}
        |w(t,\theta)| \leq e^{(\rmax+M'+C_{Lip})T} T ||\rho_w||_{L^\infty([0,T])} C e^{-\frac{\mu\theta^2}{1+4\mu T}},
    \end{equation}
    for all $(t,\theta)\in[0,T]\times\R$, as the inequality (\ref{eq:w system (proof:uniqueness)}) remains true if we instead define $w$ to be $v-u$.
    Integrating the previous inequality with respect to $\theta\in\R$, we get:
    \begin{equation*}
        |\rho_w(t)| \leq e^{(\rmax+M'+C_{Lip})T} T ||\rho_w||_{L^\infty([0,T])} C \sqrt{\frac{\pi(1+4\mu T)}{\mu}},
    \end{equation*}
    for all $t\in[0,T]$.
    Defining $q := e^{(\rmax+M'+C_{Lip})T} T C \sqrt{\frac{\pi(1+4\mu T)}{\mu}}$, we can choose $T>0$ small enough to have $0<q<1$.
    Since the previous inequality yields $||\rho_w||_{L^\infty([0,T])} \leq q ||\rho_w||_{L^\infty([0,T])}$, this implies $||\rho_w||_{L^\infty([0,T])}=0$, and thus $w$ is identically $0$ over $[0,T]\times\R$ according to the inequality (\ref{eq:w<gaussian (proof:uniqueness)}).
    
    To conclude the proof, define $\bar{T}\in[0,T^*]$ the maximal time such that $u\equiv v$ over $[0,\bar{T})\times\R$. If we suppose $\bar{T}<T^*$, then by continuity $u(\bar{T},\cdot)\equiv v(\bar{T},\cdot)$ over $\R$. Taking $\bar{T}$ as the initial time, we can apply the arguments we used above to find a time $T>\bar{T}$ such that $u\equiv v$ over $[0,T]\times\R$, which contradicts the definition of $\bar{T}$. Therefore, $\bar{T}=T^*$ and $u\equiv v$ over $[0,T^*)\times\R$, hence the uniqueness of the solution.
\end{proof}

We now turn to the aforementioned truncated problem and consider local solutions.

\begin{lemma} \label{lemma:local wel-pos}
 Let $\varphi\in C^1_c(\R)$ such that $0\leq\varphi\leq1$, $\beta\in(0,1)$, and $C_0,\mu_0,C_\beta>0$. Then there exists $T=T(C_0,\mu_0,C_\beta)>0$ such that, for every $u_0\in C^{0,\beta}(\R)$ verifying
 \begin{equation} \label{eq:local well-pos u0 ass}
0\leq u_0(\theta) \leq C_0 e^{-\mu_0 \theta^2} \quad  \text{ for all } \theta\in\R,
       \quad  \text{ and } \quad \sup_{\substack{\theta_1,\theta_2\in\R,\\ \theta_1\neq\theta_2}} \frac{\abs{u_0(\theta_1)-u_0(\theta_2)}}{\abs{\theta_1-\theta_2}^\beta} \leq C_\beta,
    \end{equation}
    %\begin{equation} \label{eq:local well-pos u0 ass}
      %  \begin{aligned}
        %&0\leq u_0(\theta) \leq C_0 e^{-\mu_0 \theta^2} \ \ \ \text{ for all } \theta\in\R,
        %\\
        %\text{and }\ \ \ \ \ 
        %&\sup_{\substack{\theta_1,\theta_2\in\R,\\ \theta_1\neq\theta_2}} \frac{\abs{u_0(\theta_1)-u_0(\theta_2)}}{\abs{\theta_1-\theta_2}^\beta} \leq C_\beta,
        %\end{aligned}
   % \end{equation}
    the problem
    \begin{equation} \label{eq:truncated problem local}
    	\begin{cases}
    		\dt u(t,\theta) - \dthth u(t,\theta) = \varphi(\theta) r(\theta) u(t,\theta) - u(t,\theta) \rho_u(t) - f(u(t,\theta)),
          \quad   &(t,\theta) \in (0,T)\times\R,
    		\\
    		u(0,\theta) = u_0(\theta),
            &\theta \in \R,
    	\end{cases}
    \end{equation}
    admits a unique solution $u$ in $X_{T}$.
\end{lemma}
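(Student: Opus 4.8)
The plan is to recast \eqref{eq:truncated problem local} as a fixed point problem through the Duhamel formula and to solve it by the Banach fixed point theorem in a \emph{weighted Hölder} space, the weight being an explicit moving Gaussian \emph{supersolution} that simultaneously tames the nonlocal term and the spreading of Gaussian tails under the heat flow. For $u$ defined on $[0,T]\times\R$ put
\[
\Phi(u)(t,\theta):=\int_\R G(t,\theta-\eta)\,u_0(\eta)\,d\eta+\int_0^t\!\!\int_\R G(t-s,\theta-\eta)\,F_u(s,\eta)\,d\eta\,ds,
\]
with $F_u(s,\eta):=\varphi(\eta)r(\eta)u(s,\eta)-u(s,\eta)\rho_u(s)-f(u(s,\eta))$. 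Introduce the Gaussian profile
\[
\Gamma(t,\theta):=\int_\R G(t,\theta-\eta)e^{-\mu_0\eta^2}\,d\eta=\frac{1}{\sqrt{1+4\mu_0 t}}\,e^{-\frac{\mu_0\theta^2}{1+4\mu_0 t}},\qquad \norm{\Gamma(t,\cdot)}_{L^1(\R)}=\sqrt{\pi/\mu_0},
\]
and set $\bar u(t,\theta):=C_0 e^{\lambda t}\Gamma(t,\theta)$, the constant $\lambda>0$ being fixed below in terms of $C_0$, $\mu_0$, $\norm{\varphi r}_{L^\infty}$ and $C_{Lip}$ only. The semigroup property of $G$ gives the key supersolution identity: for $0\le s\le t$,
\[
\int_\R G(t-s,\theta-\eta)\,\bar u(s,\eta)\,d\eta=C_0 e^{\lambda s}\Gamma(t,\theta)=e^{-\lambda(t-s)}\bar u(t,\theta)\le\bar u(t,\theta).
\]
With $L:=2C_\beta+1$, let $\mathcal S_T$ be the set of $u\in C^0([0,T]\times\R)$ such that $|u(t,\theta)|\le\bar u(t,\theta)$ and $[u(t,\cdot)]_{C^{0,\beta}(\R)}\le L$ for all $(t,\theta)\in[0,T]\times\R$, equipped with $d(u,v):=\sup_{[0,T]\times\R}|u-v|/\bar u$; since $d$-convergence forces local uniform convergence and both constraints are closed under it, $(\mathcal S_T,d)$ is a complete metric space.

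First I would show $\Phi(\mathcal S_T)\subset\mathcal S_T$ for $T$ small. If $u\in\mathcal S_T$ then $0\le\rho_u(t)\le\norm{\bar u(t,\cdot)}_{L^1}=C_0 e^{\lambda t}\sqrt{\pi/\mu_0}$, so, using $\norm{\varphi r}_{L^\infty}<\infty$ (as $\varphi r\in C^1_c(\R)$), $f(0)=0$ and $|f(u)|\le C_{Lip}|u|$, one gets $|F_u(s,\eta)|\le\kappa(s)\,\bar u(s,\eta)$ with $\kappa(s)=\norm{\varphi r}_{L^\infty}+C_0 e^{\lambda s}\sqrt{\pi/\mu_0}+C_{Lip}$. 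Combining this with the supersolution identity and $\int_\R G(t,\theta-\eta)u_0(\eta)\,d\eta\le C_0\Gamma(t,\theta)$ gives $|\Phi(u)(t,\theta)|\le C_0\Gamma(t,\theta)\big(1+\int_0^t\kappa(s)e^{\lambda s}\,ds\big)$; choosing $\lambda:=2\big(\norm{\varphi r}_{L^\infty}+C_0\sqrt{\pi/\mu_0}+C_{Lip}\big)$ and then $T$ small enough that $\kappa(T)\le\lambda$ forces $1+\int_0^t\kappa(s)e^{\lambda s}\,ds\le e^{\lambda t}$, i.e. $|\Phi(u)|\le\bar u$ on $[0,T]\times\R$. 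For the Hölder constraint, the heat semigroup contracts the spatial seminorm, $[G(\tau,\cdot)*g]_{C^{0,\beta}}\le[g]_{C^{0,\beta}}$, so $[\Phi(u)(t,\cdot)]_{C^{0,\beta}}\le C_\beta+\int_0^t[F_u(s,\cdot)]_{C^{0,\beta}}\,ds$; the product and composition rules (with $\varphi r$ globally $\beta$-Hölder, being Lipschitz with compact support, and $f$ Lipschitz) bound $[F_u(s,\cdot)]_{C^{0,\beta}}\le aL+b$ for constants $a,b$ depending only on the data, hence $C_\beta+T(aL+b)\le L$ for $T$ small --- this is where $T$ is allowed to depend on $C_\beta$.

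Next I would verify that $\Phi$ is a contraction for $T$ small. For $u,v\in\mathcal S_T$ and $w:=u-v$ one has $\Phi(u)-\Phi(v)=\int_0^t G(t-s)*(F_u-F_v)(s,\cdot)\,ds$; writing the nonlocal difference as $u\rho_u-v\rho_v=u\,\rho_w+w\,\rho_v$ and using $|w(s,\cdot)|\le d(u,v)\bar u(s,\cdot)$, $|\rho_w(s)|\le d(u,v)\norm{\bar u(s,\cdot)}_{L^1}$, $|\rho_v(s)|\le\norm{\bar u(s,\cdot)}_{L^1}$ and $|u|\le\bar u$, one obtains $|F_u-F_v|(s,\eta)\le d(u,v)\,\nu(s)\,\bar u(s,\eta)$ with $\nu(s)=\norm{\varphi r}_{L^\infty}+C_{Lip}+2C_0 e^{\lambda s}\sqrt{\pi/\mu_0}$. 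The supersolution identity then yields $|\Phi(u)(t,\theta)-\Phi(v)(t,\theta)|\le d(u,v)\,\nu(T)\,C_0\Gamma(t,\theta)\,\frac{e^{\lambda t}-1}{\lambda}$, i.e. $d(\Phi(u),\Phi(v))\le\frac{\nu(T)}{\lambda}\,(1-e^{-\lambda T})\,d(u,v)$, a strict contraction once $T$ is small enough ($\nu(T)$ staying bounded as $T\to 0$). Banach's theorem then provides a unique $u\in\mathcal S_T$ with $\Phi(u)=u$.

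It remains to recover a genuine solution in $X_T$ and to establish its uniqueness. By construction $u\in C^0([0,T]\times\R)$, $\rho_u$ is bounded on $[0,T]$, and $|u(t,\theta)|\le\bar u(t,\theta)\le C_0 e^{\lambda T}e^{-\frac{\mu_0\theta^2}{1+4\mu_0 T}}$ is a genuine (decaying) Gaussian envelope, so all three requirements in \eqref{def: XT} hold once interior regularity is known. Since $u$ is $\beta$-Hölder in $\theta$ uniformly on $[0,T]$ and $\rho_u$ is continuous (dominated convergence via the Gaussian envelope), $F_u$ is continuous and $\beta$-Hölder in $\theta$ uniformly on compact time subintervals of $(0,T)$; differentiating the Duhamel representation of $u$ exactly as in the proof of Proposition~\ref{prop:du<gaussian} --- equivalently, invoking interior parabolic Schauder estimates --- shows $u\in C^{1;2}_{t;\theta}((0,T)\times\R)$ and that $u$ solves \eqref{eq:truncated problem local} classically there, hence $u\in X_T$. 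Uniqueness in $X_T$ follows from the argument of Proposition~\ref{prop:uniqueness}: the truncation merely replaces $r$ by the bounded, at most quadratically growing function $\varphi r$, so Proposition~\ref{prop:u<gaussian} and Lemma~\ref{lemma:u<e^(Ct)*gaussian} apply verbatim and force the difference of two $X_T$-solutions to vanish, first on a short interval and then, by continuation, on all of $[0,T)$. The main obstacle is precisely the coupling between the quadratic nonlocal term $u\rho_u$ and the heat-flow spreading of Gaussian tails --- neither is absorbed by a plain exponentially weighted sup-norm --- but both are handled at once by using the moving supersolution $\bar u(t,\theta)=C_0 e^{\lambda t}\Gamma(t,\theta)$ both as the pointwise barrier defining $\mathcal S_T$ and, after normalisation, as the weight of the contraction metric, the factor $e^{\lambda t}$ with $\lambda$ large doing the bookkeeping; the Hölder layer is there only to hand the fixed point to parabolic regularity.
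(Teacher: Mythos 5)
Your proof is correct, but it takes a genuinely different route from the paper. The paper iterates a \emph{quasi-linearized} solver: given $v$, it defines $\Phi[v]$ to be the solution of the \emph{linear} parabolic problem $\partial_t\Phi[v]-\partial_{\theta\theta}\Phi[v]=F_v\,\Phi[v]$ with frozen coefficient $F_v=\varphi r-\rho_v-\tilde f(v)$ (where $\tilde f(s)=f(s)/s$), and it works in a space $U_{T,\kappa,\gamma,\omega}$ with a \emph{fixed} Gaussian envelope $\kappa e^{-\gamma\theta^2}$ and a three-part norm $\|\cdot\|_\infty+\|\cdot\|_{1}+|\cdot|_\beta$ involving a \emph{local} H\"older seminorm. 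You instead iterate the mild-solution map $\Phi(u)$ given by the full nonlinear Duhamel integrand, and you use the \emph{moving} Gaussian $\bar u(t,\theta)=C_0e^{\lambda t}\Gamma(t,\theta)$ (which commutes exactly with $G(t-s)*$ via the semigroup identity) both as pointwise envelope and as the weight in a single sup-type metric. Each choice buys something: the paper's linearized iteration produces $\Phi[v]\ge0$ automatically from the parabolic maximum principle, so nonnegativity is built into the fixed point; your scheme works with $|u|\le\bar u$ and only recovers positivity \emph{a posteriori} once the fixed point is shown to be a classical solution in $X_T$ (via Proposition~\ref{prop:u positive}, whose proof applies verbatim to the truncated equation because $\varphi r\le\rmax$). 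Conversely, your moving Gaussian absorbs the heat-flow spreading of tails without having to shrink the exponent $\gamma$ separately, and it collapses the $L^\infty$, $L^1$, and H\"older controls into a single weighted sup-distance, which streamlines the contraction estimate ($d(\Phi(u),\Phi(v))\le\tfrac{\nu(T)}{\lambda}(1-e^{-\lambda T})\,d(u,v)$) at the small cost of tracking a growing prefactor $e^{\lambda t}$. The final bootstrap to classical regularity and the passage from uniqueness in your invariant set to uniqueness in all of $X_T$ mirror what the paper does (re-running the envelope and H\"older estimates, or equivalently invoking the argument of Proposition~\ref{prop:uniqueness} with the bounded coefficient $\varphi r$), so the endgame is the same.
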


\begin{proof}
    Let us first define for all $T,\kappa,\gamma,\omega>0$ the space:
    \begin{equation*}
        \begin{aligned}
            U_{T,\kappa,\gamma,\omega} :=
            \left\{ u \in C^0([0,T]\times\R), \text{ such that } 0\leq u(t,\theta) \leq \kappa e^{-\gamma \theta^2}\right.\ \ \ &
            \\
            \left.\text{ for all } (t,\theta) \in [0,T]\times\R \text{ and } \abs{u}_\beta\leq\omega\right\},&
        \end{aligned}
    \end{equation*}
    where $\abs{u}_\beta$ is the seminorm defined by:
    \begin{equation*}
        \abs{u}_\beta := \sup_{\substack{(t,\theta_1,\theta_2)\in[0,T]\times\R^2,\\ \theta_1\neq\theta_2,\ \abs{\theta_1-\theta_2}\leq 1}} \frac{\abs{u(t,\theta_1)-u(t,\theta_2)}}{\abs{\theta_1-\theta_2}^\beta}.
    \end{equation*}
    We define $\norm{u}_U := ||u||_\infty + ||u||_1 + \abs{u}_\beta$ its associated norm, where $||u||_\infty = \sup_{(t,\theta)\in[0,T]\times\R}\abs{u(t,\theta)}$ and $||u||_1 = \sup_{t\in[0,T]}\int_\R \abs{u(t,\theta)}d\theta$.
    We note that $U_{T,\kappa,\gamma,\omega}$ is a complete metric space with respect to the norm $||\cdot||_U$.

    Let $u_0 \in C^{0,\beta}(\R)$ a function satisfying the hypothesis (\ref{eq:local well-pos u0 ass}) of Lemma \ref{lemma:local wel-pos}.
    We note $\tilde{f}(x)$ for every $x\geq0$ the function equal to $\frac{f(x)}{x}$ when $x>0$, and to $f'(0)$ when $x=0$. $\tilde{f}$ is therefore continuous over $[0,+\infty)$ and bounded by $C_{Lip}$, and its derivative over $(0,+\infty)$ satisfies, using Taylor's theorem:
    \begin{align*}
        \tilde{f}'(x)
        &= \frac{xf'(x)-f(x)}{x^2}
        = \frac{1}{x^2} \left[ x\left(f'(0)+xf''(0)+\underset{x\to0^+}{o}(x)\right) - xf'(0) - \frac{1}{2} x^2f''(0) + \underset{x\to0^+}{o}(x^2)\right]
        \\
        &= \frac{1}{2}f''(0) + \underset{x\to0^+}{o}(1).
    \end{align*}
    Since $f$ and $f'$ are bounded, $\tilde{f}'$ is thus bounded over $[0,+\infty)$, and $\tilde{f}$ is Lipschitz continuous over $[0,+\infty)$.
    As a result, for every $v\in U_{T,\kappa,\gamma,\omega}$, $\tilde{f}\circ v$ is locally Hölder continuous (of exponent $\beta$) with respect to $\theta$, uniformly in time $t\in[0,T]$.
    Defining for all $v\in U_{T,\kappa,\gamma,\omega}$ and $(t,\theta)\in[0,T]\times\R$ the function $F_v(t,\theta):=\varphi(\theta) r(\theta) - \rho_v(t) - \tilde{f}(v(t,\theta))$, we therefore get that $F_v$ is locally Hölder continuous (of exponent $\beta$) with respect to $\theta\in\R$, uniformly in $t\in[0,T]$, because $r$ and $\varphi$ are continuously differentiable. $F_v$ is also bounded because $\varphi$ has compact support. We also note that $\rho_v$ is continuous due to $v$ being bounded by a Gaussian function in $\theta$ and the dominated convergence theorem.
    Therefore, for every $v\in U_{T,\kappa,\gamma,\omega}$, we can define $\Phi[v]$ to be the unique solution of the linear problem, see \cite[Chapter 1, Theorems 12 and 16]{friedman-parabolic}, 
    \begin{equation} \label{eq:phi[v] system (proof:local well-pos)}
    	\begin{cases}
    		\dt \Phi[v](t,\theta) - \dthth \Phi[v](t,\theta) = F_v(t,\theta)\Phi[v](t,\theta),
            \quad  &(t,\theta) \in (0,T]\times\R
    		\\
    		\Phi[v](0,\theta) = u_0(\theta),  &\theta \in \R,
    	\end{cases}
    \end{equation}
    such that $\Phi[v]\in X_T$.
    Moreover, $\Phi[v]$ is given by the Duhamel formula:
    \begin{equation} \label{eq:phi[v]=G*phi[v] (proof:local well-pos)}
        \Phi[v](t,\theta)
        = \int_\R G(t, \theta - \eta) u_0(\eta)d\eta
        + \int_0^t \int_\R G(t-s,\theta-\eta)
        F_v(s,\eta)\Phi[v](s,\eta) d\eta ds,
    \end{equation}
    for all $(t,\theta)\in(0,T]\times\R$.
    Let us show we can choose $T,\gamma>0$ small enough and $\kappa,\omega>0$ large enough so that $\Phi$ is a contraction over $U_{T,\kappa,\gamma,\omega}$.
    
    We start by showing that $\Phi[v]\in U_{T,\kappa,\gamma,\omega}$.
    First, for all $v\in U_{T,\kappa,\gamma,\omega}$, we have $0 \leq \Phi[v](t,\theta) \leq C_0 e^{\rmax T} e^{-\frac{\mu_0 \theta^2}{1+4T\mu_0}}$ due to arguments showcased in the proofs of Proposition \ref{prop:u positive}, and then applying the first case of Lemma \ref{lemma:u<e^(Ct)*gaussian}.
    Using the same arguments, we can show that any solution $u\in X_T$ to the problem (\ref{eq:truncated problem local}) is non-negative and bounded by the same Gaussian function as $\Phi[v]$.
    We may thus take $T\leq 1$, $\kappa\geq C_0 e^{\rmax}$, and $\gamma \leq \frac{\mu_0}{1+4\mu_0}$, which yields the desired estimate $\Phi[v](t,\theta) \leq \kappa e^{-\gamma\theta^2}$, as well as $u(t,\theta) \leq \kappa e^{-\gamma\theta^2}$ for all $(t,\theta) \in [0,T]\times\R$ and for any solution $u\in X_T$ to \eqref{eq:truncated problem local}.
    We will consider $\kappa$ and $\gamma$ fixed from now on.
    To prove the estimate we want on $|\Phi[v]|_\beta$, we simply write, for all $(t,\theta_1,\theta_2)\in[0,T]\times\R^2$ and using the equation (\ref{eq:phi[v]=G*phi[v] (proof:local well-pos)}):
    \begin{align*}
        \abs{\Phi[v](t,\theta_1)-\Phi[v](t,\theta_2)}
        &= \int_\R G(t, \eta)(u_0(\theta_1-\eta)-u_0(\theta_2-\eta))d\eta
        \\
        &\ \ \ + \int_0^t \int_\R (G(t-s,\theta_1-\eta)-G(t-s,\theta_2-\eta))
        F_v(s,\eta)\Phi[v](s,\eta) d\eta ds.
    \end{align*}
    We bound the first term of the right-hand side by using the Hölder continuity of $u_0$ given by the assumption (\ref{eq:local well-pos u0 ass}), which leads to $\abs{u_0(\theta_1-\eta)-u_0(\theta_2-\eta)}\leq C_\beta\abs{\theta_1-\theta_2}^\beta$.
    For the second term, we use the definition of $F_v$ and the estimate $0\leq v(t,\theta) \leq \kappa e^{-\gamma\theta^2}$ to bound $F_v$ by $C_F := ||\varphi r||_{L^\infty(\R)} + \kappa\sqrt{\frac{\pi}{\gamma}} + C_{Lip}$.
    From the Hölder continuity of the function $x\in\R\mapsto \frac{1}{\sqrt{4\pi}}e^{-x^2}$, we can write $\abs{G(t-s,\theta_1-\eta)-G(t-s,\theta_2-\eta)} \leq \frac{C_\beta'}{\sqrt{t-s}} \frac{1}{(t-s)^{\beta/2}} \abs{\theta_1-\theta_2}^\beta$, for some $C_\beta'>0$ which depends only on $\beta$.
    Supposing $T\leq1$ without loss of generality, and using the bound $|\Phi[v]|\leq \kappa$, we therefore obtain the estimate:
    \begin{equation*}
        \frac{\abs{\Phi[v](t,\theta_1)-\Phi[v](t,\theta_2)}}{\abs{\theta_1-\theta_2}^\beta}
        \leq C_\beta + C_\beta' C_F \kappa \frac{2}{1-\beta} t^{\frac{1-\beta}{2}}
        \leq C_\beta + \frac{2C_\beta' C_F \kappa}{1-\beta},
    \end{equation*}
    for all $(t,\theta_1,\theta_2)\in[0,T]\times\R^2$ such that $\theta_1\neq\theta_2$.
    Fixing $\omega>0$ to be larger than the right-hand side of the above inequality, we thus have $\abs{\Phi[v]}_\beta \leq \omega$ and $\Phi[v] \in U_{T,\kappa,\gamma,\omega}$. We can again use the same arguments on any solution $u\in X_T$ to the problem (\ref{eq:truncated problem local}) to show that $|u|_\beta$ has the same bound as $|\Phi[v]|_\beta$, and therefore $u$ necessarily belongs to $U_{T,\kappa,\gamma,\omega}$.
    
    Now that we have established that $\Phi$ maps $U_{T,\kappa,\gamma,\omega}$ into itself, let us show that it is a contraction with respect to $\norm{\cdot}_U$. Let $v,w\in U_{T,\kappa,\gamma,\omega}$. From the system (\ref{eq:phi[v] system (proof:local well-pos)}) satisfied by $\Phi[v]$ and $\Phi[w]$, $\Phi[v]-\Phi[w]$ satisfies:
    \begin{equation} \label{eq:phi[v]-phi[w] system (proof:local well-pos)}
    	\left\{
    	\begin{aligned}
    		&\dt (\Phi[v]-\Phi[w]) - \dthth (\Phi[v]-\Phi[w]) = F_v(t,\theta)(\Phi[v]-\Phi[w]) + (F_v(t,\theta)-F_w(t,\theta))\Phi[w],
    		\\
    		&(\Phi[v]-\Phi[w])(0,\theta) = 0,
    	\end{aligned}
    	\right.
    \end{equation}
    for all $(t,\theta)\in(0,T]\times\R$.
    From the definition of $F_v$ and $F_w$, we have $\abs{F_v(t,\theta)-F_w(t,\theta)} = \abs{\rho_{v-w} + \tilde{f}(v(t,\theta))-\tilde{f}(w(t,\theta))} \leq ||v-w||_1 + \tilde{C}_{Lip}||v-w||_\infty$, where $\tilde{C}_{Lip}>0$ is the Lipschitz constant of $\tilde{f}$. Moreover, $\Phi[v]\in U_{T,\kappa,\gamma,\omega}$ and so we can bound it by $\kappa e^{-\gamma\theta^2}$ and get:
    \begin{equation*}
        \dt (\Phi[v]-\Phi[w]) - \dthth (\Phi[v]-\Phi[w])
        \leq C_F\abs{\Phi[v]-\Phi[w]} + \kappa e^{-\gamma\theta^2} \left(||v-w||_1 + \tilde{C}_{Lip}||v-w||_\infty\right),
    \end{equation*}
    Applying the second case of Lemma \ref{lemma:u<e^(Ct)*gaussian} to $\Phi[v]-\Phi[w]$, we get the estimate:
    \begin{equation*}
        \abs{\Phi[v](t,\theta)-\Phi[w](t,\theta)} \leq \kappa T e^{C_F T} e^{-\frac{\gamma\theta^2}{1+4T}} \left(||v-w||_1 + \tilde{C}_{Lip}||v-w||_\infty\right),
    \end{equation*}
    for all $(t,\theta)\in[0,T]\times\R$, after noting that the same arguments apply to $\Phi[w]-\Phi[v]$.
    Choosing $T>0$ small enough (independently of $v$ and $w$), we can thus obtain the estimates $||\Phi[v]-\Phi[w]||_\infty \leq \frac{1}{4}(||v-w||_1 + ||v-w||_\infty)$ and $||\Phi[v]-\Phi[w]||_1 \leq \frac{1}{4}(||v-w||_1 + ||v-w||_\infty)$.
    We now prove that we can get a similar estimate for $|\Phi[v]-\Phi[w]|_\beta$.
    For all $(t,\theta_1,\theta_2) \in (0,T]\times\R^2$, we have:
    \begin{align*}
        &\abs{(\Phi[v]-\Phi[w])(t,\theta_1) - (\Phi[v]-\Phi[w])(t,\theta_2)}
        \leq \int_0^t \int_\R (G(t-s,\theta_1-\eta)-G(t-s,\theta_2-\eta))\times
        \\
        &\qquad\qquad\qquad\qquad\qquad      \Big[ F_v(s,\eta)(\Phi[v](s,\eta)-\Phi[w](s,\eta))+ (F_v(s,\eta)-F_w(s,\eta))\Phi[w](s,\eta) \Big] d\eta ds.
    \end{align*}
    As above, the Hölder continuity of $x\in\R\mapsto \frac{1}{\sqrt{4\pi}}e^{-x^2}$ yields $\abs{G(t-s,\theta_1-\eta)-G(t-s,\theta_2-\eta)} \leq \frac{C_\beta'}{(t-s)^{(1+\beta)/2}} \abs{\theta_1-\theta_2}^\beta$.
    For the other terms inside the integral, we have the bounds:
    \begin{align*}
        \abs{\int_\R F_v(s,\eta)(\Phi[v](s,\eta)-\Phi[w](s,\eta)) d\eta} \leq C_F||\Phi[v]-\Phi[w]||_1,
    \end{align*}
    and,
    \begin{align*}
        \abs{\int_\R (F_v(s,\eta)-F_w(s,\eta))\Phi[w](s,\eta) d\eta}
        &\leq (||v-w||_1 + \tilde{C}_{Lip}||v-w||_\infty)\int_{\R} \kappa e^{-\gamma\eta^2} d\eta
        \\
        &\leq \kappa\sqrt{\frac{\pi}{\gamma}}(||v-w||_1 + \tilde{C}_{Lip}||v-w||_\infty).
    \end{align*}
    %as we established before that $\abs{F_v(t,\theta)-F_w(t,\theta)} \leq ||v-w||_1 + \tilde{C}_{Lip}||v-w||_\infty$ for all $(t,\theta)\in[0,T]\times\R$.
    Putting these estimates together, we have:
    \begin{align*}
        &\frac{\abs{(\Phi[v]-\Phi[w])(t,\theta_1) - (\Phi[v]-\Phi[w])(t,\theta_2)}}{\abs{\theta_1-\theta_2}^\beta}
        \\
        &\ \ \ \ \ \ \ \ \ \ \ \ \ \ \ \ \ \ \ \ \ \ 
        \leq \frac{2C_\beta'}{1-\beta} T^{\frac{1-\beta}{2}} \left[ C_F ||\Phi[v]-\Phi[w]||_1 + \kappa\sqrt{\frac{\pi}{\gamma}}(||v-w||_1 + \tilde{C}_{Lip}||v-w||_\infty)\right]
        \\
        &\ \ \ \ \ \ \ \ \ \ \ \ \ \ \ \ \ \ \ \ \ \ 
        \leq \frac{2C_\beta'}{1-\beta} T^{\frac{1-\beta}{2}} \left[(\frac{C_F}{4}+\kappa\sqrt{\frac{\pi}{\gamma}})||v-w||_1 + (\frac{C_F}{4}+\tilde{C}_{Lip})||v-w||_\infty\right],
    \end{align*}
    for all $(t,\theta_1,\theta_2)\in[0,T]\times\R^2$ such that $\theta_1\neq\theta_2$.
    We can thus choose $T>0$ small enough (independently of $v$ and $w$) such that $\abs{\Phi[v]-\Phi[w]}_\beta \leq \frac{1}{4}(||v-w||_1 + ||v-w||_\infty)$.
    Finally, adding up our estimates, we get $||\Phi[v]-\Phi[w]||_U \leq \frac{3}{4}||v-w||_U$, making $\Phi$ a contraction of $U_{T,\kappa,\gamma,\omega}$.

    Applying the Banach fixed-point theorem, we know $\Phi$ has a unique fixed point in $U_{T,\kappa,\gamma,\omega}$, corresponding to a unique solution to the problem (\ref{eq:truncated problem local}). Since we have shown that any solution $u\in X_T$ necessarily belongs to $U_{T,\kappa,\gamma,\omega}$, the solution is unique in $X_T$ as well, which concludes the proof of Lemma \ref{lemma:local wel-pos}.
\end{proof}

We now turn to global solutions for the truncated problem. Recall that, for any $T^* \in (0,+\infty]$, $X_{T^*}$ was defined in \eqref{def: XT}.

\begin{prop} \label{prop:truncated well-pos}
    Let $\varphi\in C^1_c(\R)$   such that $0\leq \varphi \leq 1$. Then there exists a unique global solution $u$ to the problem
    \begin{equation} \label{eq:truncated problem}
    	\begin{cases}
    		\dt u(t,\theta) - \dthth u(t,\theta) = \varphi(\theta) r(\theta) u(t,\theta) - u(t,\theta) \rho_u(t) - f(u(t,\theta)),
            \quad & (t,\theta) \in (0,+\infty)\times\R,
    		\\
    		u(0,\theta) = u_0(\theta),
            \quad  & \theta \in \R,
    \end{cases}
    \end{equation}
    such that $u\in X_{\infty}$. Furthermore, $u$ s given by
    \begin{align}
        \nonumber
        u(t,\theta)
        = &\int_\R G(t,\theta-\eta)u_0(\eta)d\eta
        \\
        \label{eq:u=G*phi*u}
        &+ \int_0^t \int_\R G(t-s,\theta-\eta) \left[\varphi(\eta) r(\eta) u(s,\eta) - u(s,\eta) \rho_u(s) - f(u(s,\eta))\right] d\eta ds,
    \end{align}
    for all $(t,\theta)\in(0,+\infty)\times\R$.
\end{prop}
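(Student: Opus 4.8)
The plan is to combine the local well-posedness of Lemma~\ref{lemma:local wel-pos} with a continuation argument, the only possible obstruction to continuation being ruled out by a~priori bounds that do not degenerate in finite time. First, Lemma~\ref{lemma:local wel-pos} (applied with the given $\varphi$, with $C_0,\mu_0$ from Assumption~\ref{ass:u0}, and with $C_\beta$ the H\"older seminorm of $u_0$) yields a solution in $X_{T_1}$ for some $T_1>0$, unique in $X_{T_1}$ and nonnegative. Let $T^*\in(0,+\infty]$ be the supremum of the times $T$ such that \eqref{eq:truncated problem} admits a solution in $X_T$; arguing as in the proof of Proposition~\ref{prop:uniqueness} (whose argument applies verbatim since the coefficient $\varphi r$ is bounded above by $\rmax$ and at most quadratic), these local solutions coincide on their common domains and patch into a single nonnegative $u\in X_{T^*}$ solving \eqref{eq:truncated problem} on $(0,T^*)\times\R$. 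Assume for contradiction that $T^*<+\infty$.

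The key is now to produce a~priori bounds on $[0,T^*)$ that are \emph{uniform} precisely because $T^*<+\infty$. Since $0\le\varphi\le1$ and $r\le\rmax$ we have $\varphi r\le\rmax$, hence, using also $u\ge0$, $\rho_u\ge0$, $f\ge0$, $\dt u-\dthth u\le\rmax\,u$ in $(0,T^*)\times\R$; the comparison principle together with the Gaussian convolution identity from the proof of Lemma~\ref{lemma:u<e^(Ct)*gaussian} gives
\[
0\le u(t,\theta)\le C_0\,e^{\rmax t}\,e^{-\frac{\mu_0\theta^2}{1+4\mu_0 t}}\le \bar C_0\,e^{-\bar\mu_0\theta^2},\qquad (t,\theta)\in[0,T^*)\times\R,
\]
with $\bar C_0:=C_0 e^{\rmax T^*}$ and $\bar\mu_0:=\mu_0/(1+4\mu_0 T^*)$; integrating in $\theta$ bounds $\rho_u$ on $[0,T^*)$, so $F(t,\theta):=\varphi(\theta)r(\theta)u-\rho_u u-f(u)$ satisfies $\abs{F(t,\theta)}\le \bar C_F e^{-\bar\mu_0\theta^2}$ there, for some $\bar C_F>0$. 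As $F$ is continuous, Gaussian-bounded and locally H\"older in $\theta$ uniformly in $t$ on each $[0,T]\subset[0,T^*)$, the parabolic representation theory \cite[Chapter~1, Theorems 12 and 16]{friedman-parabolic} (exactly as invoked in the proof of Proposition~\ref{prop:du<gaussian}) gives the Duhamel formula \eqref{eq:u=G*phi*u} on $(0,T^*)\times\R$. Differentiating it in $\theta$ and using $\norm{\dth G(\tau,\cdot)}_{L^1(\R)}\le C\tau^{-1/2}$, $\norm{u_0}_{L^\infty}\le C_0$ and $\norm{F(s,\cdot)}_{L^\infty}\le\bar C_F$ yields, for all $t_0\in[T^*/2,T^*)$,
\[
\norm{\dth u(t_0,\cdot)}_{L^\infty(\R)}\le C\,C_0\,(T^*/2)^{-1/2}+2C\,\bar C_F\sqrt{T^*}=:\bar L,
\]
so $u(t_0,\cdot)$ is globally $\bar L$-Lipschitz, hence globally $\beta$-H\"older with seminorm at most $\bar C_\beta:=\max(\bar L,2\bar C_0)$, uniformly in such $t_0$.

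Consequently, for any $t_0\in[T^*/2,T^*)$ the function $u(t_0,\cdot)$ satisfies the hypotheses of Lemma~\ref{lemma:local wel-pos} with the \emph{fixed} constants $\bar C_0,\bar\mu_0,\bar C_\beta$, so there is $T'=T(\bar C_0,\bar\mu_0,\bar C_\beta)>0$, independent of $t_0$, such that \eqref{eq:truncated problem} started from $u(t_0,\cdot)$ has a solution in $X_{T'}$. Picking $t_0$ so close to $T^*$ that $t_0+T'>T^*$, uniqueness on the overlap $[t_0,T^*)$ lets us patch this solution with $u$ into an element of $X_{t_0+T'}$ (continuity and $C^{1;2}_{t;\theta}$-regularity across $t=t_0>0$ being automatic, the Gaussian control and local boundedness of $\rho$ coming from the two pieces), contradicting the maximality of $T^*$. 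Hence $T^*=+\infty$, which provides the global solution $u\in X_\infty$; uniqueness in $X_\infty$ follows once more from the Proposition~\ref{prop:uniqueness} argument, and \eqref{eq:u=G*phi*u} holds on all of $(0,+\infty)\times\R$ by applying the Duhamel representation on each finite slab $[0,T]$.

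The main obstacle is precisely obtaining the restart constants $\bar C_0,\bar\mu_0,\bar C_\beta$ \emph{uniformly} as $t_0\uparrow T^*$, so that the continuation step $T'$ does not shrink to zero; this is exactly what the finiteness of $T^*$ secures, via the heat-comparison Gaussian envelope and the spatial Lipschitz bound extracted from the Duhamel formula. The remaining work — consistency of the local solutions, patching, and verifying membership in $X_{\infty}$ — is the routine bookkeeping already carried out for the lemmas of this section.
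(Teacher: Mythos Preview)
Your proof is correct and follows essentially the same continuation scheme as the paper: define a maximal time, show that the Gaussian envelope and a spatial H\"older bound on $u(t_0,\cdot)$ are uniform as $t_0\uparrow T^*$ (precisely because $T^*<+\infty$), then restart via Lemma~\ref{lemma:local wel-pos} with a step $T'$ independent of $t_0$ to reach a contradiction. The only difference is cosmetic: you obtain the H\"older control by first extracting a Lipschitz bound from $\norm{\dth G(\tau,\cdot)}_{L^1}\le C\tau^{-1/2}$ applied to the Duhamel formula, whereas the paper invokes directly the H\"older estimate derived in the proof of Lemma~\ref{lemma:local wel-pos} (via the $\beta$-H\"older continuity of the heat kernel), yielding $|u(t,\theta_1)-u(t,\theta_2)|/|\theta_1-\theta_2|^\beta\le C_\beta+C'_\beta C_F\frac{2}{1-\beta}\bar T^{(1-\beta)/2}$.
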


\begin{proof}
    Note $\bar{T}$ the maximal time such that there exists a unique solution $u\in X_{\bar{T}}$ to the problem (\ref{eq:truncated problem}) over $[0,\bar{T})\times\R$, and suppose $\bar{T}<+\infty$.
    We know from Lemma \ref{lemma:local wel-pos} that $\bar{T}>0$.
    Applying Proposition \ref{prop:u positive} and Lemma \ref{lemma:u<e^(Ct)*gaussian}, we have for all $(t,\theta)\in[0,\bar{T})\times\R$ the inequalities $0\leq u(t,\theta)\leq C_0 e^{\rmax \bar{T}} e^{-\frac{\mu_0 \theta^2}{1+4\bar{T}\mu_0}}$, where $C_0,\mu_0>0$ are the constants defined in Assumption \ref{ass:u0}.
    Moreover, in the proof of Lemma \ref{lemma:local wel-pos}, we also show the inequality:
    \begin{equation*}
        \frac{\abs{u(t,\theta_1)-u(t,\theta_2)}}{\abs{\theta_1-\theta_2}^\beta}
        \leq C_\beta + C_\beta' C_F \frac{2}{1-\beta} \bar{T}^{\frac{1-\beta}{2}}
    \end{equation*}
    for all $(t,\theta_1,\theta_2)\in[0,\bar{T})\times\R^2$ such that $\theta_1\neq\theta_2$, where $\beta\in(0,1)$ is the Hölder exponent of $u_0$ given by Assumption \ref{ass:u0}, $C_\beta>0$ is the $\beta$-Hölder coefficient of $u_0$, $C_\beta'$ is the $\beta$-Hölder coefficient of $x\in\R\mapsto \frac{1}{\sqrt{4\pi}}e^{-x^2}$, and $C_F := ||\varphi r||_{L^\infty(\R)} + C_0 e^{\rmax \bar{T}}\sqrt{\frac{\pi(1+4\bar{T}\mu_0)}{\mu_0}}+C_{Lip}$ is a bound of $(t,\theta)\mapsto\varphi(\theta)r(\theta)-\rho_u(t)-\frac{f(u(t,\theta))}{u(t,\theta)}$ over $[0,\bar{T})\times\R$.
    Finally, this shows that there exist  $\bar{C}_0$, $\bar{\mu}_0$, $\bar{C}_\beta,$ such that, for all $\tau\in (0,\bar{T})$, we have $0\le u(\tau,\theta) \le \bar{C}_0 e^{-\bar{\mu}_0 \theta^2}$ and
    $$\sup_{\substack{\theta_1,\theta_2\in\R,\\ \theta_1\neq\theta_2}} \frac{\abs{u(\tau,\theta_1)-u(\tau,\theta_2)}}{\abs{\theta_1-\theta_2}^\beta} \leq \bar{C}_\beta.$$
    Thus according to Lemma \ref{lemma:local wel-pos}, there exists $T(\bar{C}_0,\bar{\mu}_0,\bar{C}_\beta)>0$ such that, for all $\tau\in[0,\bar{T})$, the local problem (\ref{eq:truncated problem local}) with initial condition $u(\tau,\cdot)$ has a unique solution $\tilde{u}$ over $[\tau,\tau+T]\times\R$ such that $\tilde{u}(\cdot-\tau,\cdot)\in X_T$.
    Taking $\tau = \bar{T}-T/2$, we can therefore extend the unique solution to the problem \eqref{eq:truncated problem} to the domain $[0,\bar{T}+T/2)\times\R$, which contradicts the definition of $\bar{T}$.
    Thus, $\bar{T}=+\infty$ which proves the existence of a unique solution over $[0,+\infty)\times\R$.

    Since, for all $T>0$, the function $(t,\theta)\mapsto\varphi(\theta)r(\theta) u(t,\theta) - u(t,\theta) \rho_u(t) - f(u(t,\theta))$ is bounded over $[0,T]\times\R$ and locally Hölder-continuous in $\theta\in\R$, uniformly in $t\in[0,T]$, we can verify that the right-hand side of the equation (\ref{eq:u=G*phi*u}) is also a solution to the problem (\ref{eq:truncated problem}) and belongs to $X_\infty$. By uniqueness, we get that $u$ satisfies \eqref{eq:u=G*phi*u}.
\end{proof}
%\hfill
%\newline

We are now ready to prove Theorem \ref{th:well-pos}, that is, the existence and uniqueness of a solution to the main problem (\ref{eq:main}).

\begin{proof}[Proof of Theorem \ref{th:well-pos}]
Let $\varphi\in C_c^1(\R)$ with $0\le \varphi\le 1$, $\varphi\equiv1$ on $[-1,1]$ and $\varphi\equiv0$ on $\R\setminus(-2,2)$, and set $\varphi_n(\theta):=\varphi(\theta/n)$, $n\in\N^*$. By Proposition \ref{prop:truncated well-pos}, for each $n$ there exists a unique global solution $u_n\in X_\infty$ to
\[
\partial_t u_n-\partial_{\theta\theta} u_n
= \varphi_n(\theta) r(\theta) u_n - \rho_{u_n}(t)\,u_n - f(u_n),\qquad
u_n(0,\theta)=u_0(\theta),
\]
with the representation
\begin{equation}\label{eq:mild-n}
u_n(t,\theta)=G(t)*u_0(\theta)+ \int_0^t\!\!\int_\R G(t-s,\theta-\eta)\Big(\varphi_n r\,u_n-\rho_{u_n}u_n-f(u_n)\Big)(s,\eta)\,d\eta\,ds.
\end{equation}

\medskip
\noindent\textbf{Step 1: Uniform bounds (independent of $n$).}
For any fixed $T>0$, Proposition \ref{prop:u positive} and Lemma \ref{lemma:u<e^(Ct)*gaussian} give a Gaussian envelope
\begin{equation}\label{eq:env}
0\le u_n(t,\theta)\le \kappa\, e^{-\gamma\theta^2}\qquad\text{for all }(t,\theta)\in[0,T]\times\R,
\end{equation}
with $\kappa=\kappa(T)>0$ and $\gamma=\gamma(T)>0$ independent of $n$. In particular,
\[
\|u_n\|_{L^\infty([0,T]\times\R)}\le \kappa,\qquad
\sup_{t\in[0,T]}\int_\R u_n(t,\theta)\,d\theta\le \kappa\sqrt{\frac{\pi}{\gamma}},
\]
hence $|\rho_{u_n}(t)|\le M:=\kappa\sqrt{\pi/\gamma}$ on $[0,T]$, uniformly in $n$.
By the growth assumption on $r$ and the boundedness of $f$, the term
\[
H_n(t,\theta):=\varphi_n(\theta)r(\theta)u_n(t,\theta)-\rho_{u_n}(t)u_n(t,\theta)-f(u_n(t,\theta))
\]
satisfies the uniform bound
\begin{equation}\label{eq:F-unif}
|H_n(t,\theta)|\le C_\star\qquad\text{for all }(t,\theta)\in[0,T]\times\R,
\end{equation}
for some $C_\star=C_\star(T)$ independent of $n$.

\medskip

\noindent\textbf{Step 2: Equicontinuity.}

\smallskip
\noindent\emph{Uniform equicontinuity in $\theta$.}
Using the representation formula \eqref{eq:mild-n} with 
$\theta_1,\theta_2\in\mathbb{R}$, write
\[
|u_n(t,\theta_1)-u_n(t,\theta_2)| \;\le\; A+B,
\]
with
\[
A:=\Big|\int_{\mathbb{R}}G(t,\eta)\big(u_0(\theta_1-\eta)-u_0(\theta_2-\eta)\big)\,d\eta\Big|,
\quad
B:=\Big|\int_0^t\!\!\int_{\mathbb{R}}\!\big(G_{1}-G_{2}\big)H_n\,d\eta\,ds\Big|,
\]
where $G_{1}=G(t-s,\theta_1-\eta)$, $G_{2}=G(t-s,\theta_2-\eta)$ and $H_n=H_n(s,\eta)$.

We first deal with the term $A$. Since $[u_0]_{C^{0,\beta}(\R)}:=\sup_{x\neq y}\frac{|u_0(x)-u_0(y)|}{|x-y|^\beta}<+\infty$, we have
\[
A \le [u_0]_{C^{0,\beta}(\mathbb{R})}\,|\theta_1-\theta_2|^{\beta}\int_{\mathbb{R}}G(t,\eta)\,d\eta
= [u_0]_{C^{0,\beta}(\mathbb{R})}\,|\theta_1-\theta_2|^{\beta}.
\]

Next, we deal with the term $B$.  Let us recall that, for any $g\in W^{1,1}(\R)$, any $h\in\R$, there holds
\begin{equation}\label{W11}
\int_{\R}\big|g(x+h)-g(x)\big|\,dx \;\le\; |h|\,\|g'\|_{L^1(\R)}.
\end{equation}
%We begin with a standard lemma (we give its proof for completeness).
%\begin{lemma}\label{lem:L1-translation}
%Let $g\in W^{1,1}(\R)$. Then for all $h\in\R$,
%\[
%\int_{\R}\big|g(x+h)-g(x)\big|\,dx \;\le\; |h|\,\|g'\|_{L^1(\R)}.
%\]
%\end{lemma}
%\begin{proof}
%For a.e. $x\in\R$ and all $h\in\R$, 
%\[
%g(x+h)-g(x)=\int_0^h g'(x+s)\,ds = h\!\int_0^1 g'(x+sh)\,ds .
%\]
%Thus,
%\[
%\int_{\R} |g(x+h)-g(x)|\,dx
%\le |h|\!\int_0^1\!\int_{\R} |g'(x+sh)|\,dx\,ds
%= |h|\!\int_0^1 \|g'\|_{L^1(\R)}\,ds
%= |h|\,\|g'\|_{L^1(\R)}.
%\]
%\end{proof}
Since
\[
\partial_\eta G(\tau,\eta)= -\frac{\eta}{2\tau}G(\tau,\eta),\qquad
\|\partial_\eta G(\tau,\cdot)\|_{L^1(\mathbb{R})}
=\frac{1}{\sqrt{\pi\,\tau}},
\]
using \eqref{W11} with  $g(\cdot)=G(t-s,\cdot)$ and $h=\theta_1-\theta_2$, and changing variables
$y=\theta_2-\eta$, we obtain
\begin{align*}
    \int_\R |G_{1}-G_{2}| d\eta
 \le |\theta_1-\theta_2|\,\|\partial_\eta G(t-s,\cdot)\|_{L^1(\R)} 
= \frac{|\theta_1-\theta_2|}{\sqrt{\pi\,(t-s)}}.
\end{align*}
With $\|H_n\|_{L^\infty([0,T]\times\mathbb{R})}\le C_\star$  from \eqref{eq:F-unif},
\[
B \le C_\star\,|\theta_1-\theta_2| \int_0^t \frac{ds}{\sqrt{\pi\,(t-s)}}
= \frac{2C_\star}{\sqrt{\pi}}\,\sqrt{t}\,|\theta_1-\theta_2|
\le \frac{2C_\star}{\sqrt{\pi}}\,\sqrt{T}\,|\theta_1-\theta_2|.
\]

\smallskip
Finally, for all $t\in[0,T]$ and $\theta_1,\theta_2\in\mathbb{R}$,
\[
|u_n(t,\theta_1)-u_n(t,\theta_2)|
\;\le\; [u_0]_{C^{0,\beta}(\mathbb{R})}\,|\theta_1-\theta_2|^{\beta}
\;+\; \frac{2C_\star}{\sqrt{\pi}}\,\sqrt{T}\,|\theta_1-\theta_2|.
\]
Thus, $(u_n)$ is uniformly equicontinuous in $\theta$ on $[0,T]\times\mathbb{R}$. The above formula also gives  the spatial Hölder control
\begin{equation}\label{eq:holder-1}
\sup_{\substack{t\in[0,T],\,\theta_1,\theta_2\in\R\\ 0<|\theta_1-\theta_2|\le 1}}
\frac{|u_n(t,\theta_1)-u_n(t,\theta_2)|}{|\theta_1-\theta_2|^\beta}\ \le\ C_\star',
\end{equation}
for some constant $C_\star'$ independent of $n$.

\medskip

\noindent\emph{Equicontinuity in $t$.}
Fix $0\le s<t\le T$ and $\theta\in\R$ and set $h:=t-s$. Writing
\begin{equation}\label{eq:duhamel_tau}
u_n(t,\theta)
= \big(G(h)*u_n(s,\cdot)\big)(\theta)
+\int_s^t\!\!\int_{\R} G(t-\tau,\theta-\eta)\,H_n(\tau,\eta)\,d\eta\,d\tau,
\end{equation}
we get
\[
|u_n(t,\theta)-u_n(s,\theta)| \;\le\; I_1+I_2,
\]
with
\[
I_1:=\int_{\R} G(h,\eta)\,\big|u_n(s,\theta-\eta)-u_n(s,\theta)\big|\,d\eta,\qquad
I_2:=\int_s^t \|H_n(\tau,\cdot)\|_{L^\infty}\,d\tau.
\]

We split the integral $I_1$ into $\{|\eta|\le 1\}$ and $\{|\eta|>1\}$ and write, with obvious notations, $I_1 = I_{1,\mathrm{small}}+I_{1,\mathrm{tail}}$. Using \eqref{eq:holder-1}, we get
\[
I_{1,\mathrm{small}}
\le C_\star' \int_{|\eta|\le 1} G(h,\eta)\,|\eta|^\beta\,d\eta
\le C_\star' \int_{\R} G(h,\eta)\,|\eta|^\beta\,d\eta.
\]
Moreover,
\begin{align}
\int_\R G(h,\eta)\,|\eta|^\beta\,d\eta
&=\frac{1}{\sqrt{4\pi h}}\int_\R e^{-\frac{\eta^2}{4h}}|\eta|^\beta\,d\eta
=\frac{1}{\sqrt{4\pi h}}(2\sqrt{h})^{\beta+1}\int_\R e^{-y^2}|y|^\beta\,dy \nonumber \\
&=\frac{1}{\sqrt{\pi}}\,2^{\beta}h^{\frac{\beta}{2}}
\Big(2\int_0^{+\infty} e^{-y^2}y^\beta\,dy\Big)
=\frac{2^{\beta}}{\sqrt{\pi}}\,\Gamma\!\Big(\frac{\beta+1}{2}\Big)\,h^{\frac \beta 2}. \label{eq:G(h)}
\end{align}
Hence $I_{1,\mathrm{small}} \le C_\star'\, h^{\frac \beta2}$ for some constant $C_\star'$ which does not depend on $n$ (we do not change the constant name for simplicity). For the tail, use the estimate of Step~1,  $\|u_n\|_{L^\infty}\le \kappa$ and~\eqref{eq:G(h)}:
\[I_{1,\mathrm{tail}}
\;\le\; 2\kappa \int_{|\eta|>1} G(h,\eta)\,d\eta
\;\le\; 2\kappa  \int_{\R} |\eta|^\beta G(h,\eta)\,d\eta
= 2\kappa \frac{2^{\beta}}{\sqrt{\pi}}\,\Gamma\!\Big(\frac{\beta+1}{2}\Big)\,h^{\frac \beta 2}.\]
Thus, in all cases,
\[
I_1 \le C_\star'\, h^{\frac \beta2}.
\]
Using $\|H_n\|_{L^\infty([0,T]\times\mathbb{R})}\le C_\star$, we readily get $I_2 \le C_\star |t-s|$. Finally, for all $s,t\in[0,T]$ and $\theta\in\R$,
\[
|u_n(t,\theta)-u_n(s,\theta)|
\;\le\; I_1+I_2 \le C_\star' |t-s|^{\beta/2}+C_\star\,|t-s|.
\]

Thus, for each $R>0$, $(u_n)$ is equicontinuous and uniformly bounded on each compact strip
\[
K_R:=[0,T]\times[-R,R].
\]

\medskip

\noindent\textbf{Step 3: Arzelà-Ascoli and diagonal extraction.}
By Arzelà-Ascoli, for each $R$ there exists a subsequence (not relabeled) converging uniformly on $K_R$.
A diagonal argument provides a single subsequence (still denoted $u_n$) and a limit $u\in C^0([0,T]\times\R)$ such that
\[
u_n\to u\quad\text{uniformly on }K_R\ \text{for every }R>0.
\]

\medskip

\noindent\textbf{Step 4: Global uniform convergence via Gaussian tails.}
The envelope \eqref{eq:env} passes to the limit, hence $0\le u(t,\theta)\le \kappa e^{-\gamma\theta^2}$ on $[0,T]\times\R$.
Given $\varepsilon>0$, pick $R$ so large that $2\kappa e^{-\gamma R^2}<\varepsilon/2$. Then, for $n$ large enough,
\[
\sup_{[0,T]\times\R}|u_n-u|
\le \sup_{K_R}|u_n-u|+\sup_{|\theta|>R}(u_n+u)
\le \varepsilon/2+2\kappa e^{-\gamma R^2}<\varepsilon.
\]
Thus $u_n\to u$ uniformly on $[0,T]\times\R$.

\medskip
\noindent\textbf{Step 5: Passage to the limit in the equation.}
Uniform convergence implies $f(u_n)\to f(u)$ uniformly. Moreover, by the Gaussian bound
$u_n\to u$ in $L^1(\R)$ uniformly in $t\in[0,T]$, hence $\rho_{u_n}(t)=\int_\R u_n(t,\theta)\,d\theta\to \rho_u(t)$ uniformly in $t$.
In the  formula \eqref{eq:mild-n}, the integrands are uniformly bounded by an integrable function
(heat kernel times the Gaussian envelope), so dominated convergence yields, for each $(t,\theta)\in(0,T]\times\R$,
\begin{equation}\label{eq:mild-limit}
u(t,\theta)=G(t)*u_0(\theta) + \int_0^t\!\!\int_\R G(t-s,\theta-\eta)\Big(r(\eta)u-\rho_u(s)u-f(u)\Big)(s,\eta)\,d\eta\,ds.
\end{equation}
By classical parabolic regularity (e.g., \cite[Chapter1, Theorem 12]{friedman-parabolic}) applied to \eqref{eq:mild-limit}, $u\in X_T$ and solves
\[
\partial_t u-\partial_{\theta\theta}u=r(\theta)u-\rho_u(t)u-f(u)\quad\text{on }(0,T]\times\R,\qquad u(0,\theta)=u_0(\theta).
\]

\medskip
\noindent\textbf{Step 6: Uniqueness and conclusion.}
By Proposition \ref{prop:uniqueness}, the solution in $X_T$ is unique; hence the whole sequence $(u_n)$ converges to $u$ on $[0,T]\times\R$ (not only a subsequence). Since $T>0$ was arbitrary, $u\in X_\infty$ is the unique global solution of \eqref{eq:main}. This completes the proof.
\end{proof}

\section{Persistence vs. extinction}\label{s:pers-vs-ext}

\subsection{Extinction for small data}\label{ss:proof-small-data}

\begin{proof}[Proof of Proposition \ref{prop:sigma-pt}] For $\sigma>0$, recall that we denote $u^\sigma$ the solution to \eqref{eq:main} starting from $u_0^\sigma$.  From the assumptions in \eqref{eq:family_initial_conditions},  and with $\varepsilon$ as in \eqref{eq:def_epsilon},  we may select $\sigma_*>0$ small enough so that, for any $0<\sigma<\sigma_*$, $\frac{e^{\rmax }}{\sqrt{4\pi}}\rho _{u^\sigma}^0<\varepsilon$. From now, we assume $0<\sigma<\sigma_*$. Since
$$
\partial _t u^\sigma \leq \partial _{\theta \theta}u^\sigma+\rmax u^\sigma,
$$
it follows from the standard parabolic comparison principle that
$$
u^\sigma (t,\theta)\leq e^{\rmax t} (G(t,\cdot)*u_0^\sigma)(\theta)\leq \frac{e^{\rmax t}}{\sqrt{4\pi t}}\Vert u_0^\sigma\Vert_{L^1}=\frac{e^{\rmax t}}{\sqrt{4\pi t}} \rho_{u^\sigma}^0,\quad \forall t\geq 0,\,  \forall \theta \in \R, 
$$
and, in particular, 
$$
u^\sigma(1,\theta)\leq \varepsilon, \quad \forall \theta \in \R.
$$

Now, consider the solution $U=U(t,\theta)$ to the Cauchy problem
\begin{equation*}
\begin{cases}
		 \partial_t U= \partial_{\theta\theta} U+ \rmax\, U- f(U), \quad  & t>0 ,\, \theta\in\R,
		\\
		U(0,\theta) = \varepsilon, & \theta \in \R.
	\end{cases}
\end{equation*}
From the standard parabolic comparison principle, there holds 
$u^\sigma(1+t,\theta)\leq U(t,\theta)$ for all $t\geq 0$, all $\theta \in \R$. Moreover, $U$ does not depend on   $\theta$, and, from $U(0)\leq \varepsilon$ and  since $f(0)=0$ and $\rmax s - f(s)<0$ for all $s\in (0,\varepsilon]$,  we have $U(t)\to 0$ as $t\to +\infty$. Hence $u^\sigma(t,\cdot) \to 0$ in $L^\infty(\R)$, as $t\to +\infty$, which concludes the proof. 
\end{proof}

\subsection{Extinction for large data}\label{ss:proof-large-data}

\begin{proof}[Proof of Theorem \ref{th:sigma-gd}]  Integrating equation~\eqref{eq:main} over $\theta \in \R$, we get
$$
   \rho_u'(t)  =  \int _{-\infty}^{+\infty} r(\theta)u(t,\theta)\, d\theta - \rho_u^2(t) - \int_{-\infty}^{\infty} f(u(t,\theta))\, d\theta.
   $$
Since $r$ is bounded from below, say by some $r_{min}$, and $f$ is $C_{Lip}$-Lipschitz continuous, we get
$$
 \rho_u'(t) \ge (r_{min}-C_{Lip}) \rho_u(t)  - \rho_u^2(t)\geq -K \rho_u(t)  - \rho_u^2(t),
$$
with $K:=C_{Lip}-\min(0,r_{min})>0$. From the comparison principle for ODEs, we have $\rho_u(t) \geq \rhod(t)$ for all $t\ge 0$, where $\rhod$ is the solution to the Cauchy problem
\begin{equation}
    \rhod'(t) = -K\,  \rhod(t)  - \rhod^2(t), \quad \rhod(0)=\rho_u(0)=:\rho_u^0,
\end{equation}
which is explicitly computable so that
\begin{equation}
   \rho_u(t)\geq  \rhod(t) =  \frac{K}{\left( 1+\frac{K}{\rho_u^0}\right) e^{K t}-1}, \quad \forall t\geq 0. 
\end{equation}

Now, consider the solution $\ub=\ub(t,\theta)$ to the Cauchy problem
\begin{equation}\label{eq:ub1}
\begin{cases}
		 \partial_t \ub = \partial_{\theta\theta} \ub + \rmax\, \ub - \rhod (t) \, \ub, \quad  & t>0 ,\, \theta\in\R,
		\\
		\ub(0,\theta) = M, & \theta \in \R,
	\end{cases}
\end{equation}
with $M$ coming from hypothesis  \eqref{eq:family_initial_conditions} on the family of initial conditions $(u_0^\sigma)_{\sigma \geq 0}$.  Then the solution $u^\sigma$ to \eqref{eq:main} with initial condition $u_0^\sigma$ is a subsolution of \eqref{eq:ub1}, and the standard parabolic comparison principle implies $u^\sigma(t,\theta)\leq \ub(t,\theta)$ for all $t\ge0$, all $\theta \in \R$. Moreover, $\ub$ does not depend on $\theta$, and is explicitly given by
$$
\ub(t)= M \exp\left(\rmax  t - \int_0^t \rhod(s)\,ds\right)
= \frac{M e^{\rmax   t}}{1 + \frac{\rho_{u^\sigma}^0}{K}(1-e^{-K  t})}.
$$

Coming back to the assumptions in \eqref{eq:family_initial_conditions} on the family $(u^\sigma_0)_{\sigma \ge 0}$, we have $\rho_{u^\sigma}^{0} =\|u_0^\sigma\|_{L^1} \to +\infty$ as $\sigma \to +\infty$.  Thus, we can define $\Sigma>0$ such that, for all $\sigma> \Sigma$, $\rho_{u^\sigma}^0>\rmax $.  We note that $\ub(0)=M$, $\ub(+\infty)=+\infty$ and, if $\sigma>\Sigma$, from the equation \eqref{eq:ub1},  $\ub$ reaches a unique minimum at $t^*=t^*_\sigma$ such that $\rhod(t^*)=\rmax $, where
$$
t^* = \frac{1}{K} \ln\left(\frac{\rho_{u^\sigma}^0 \, \rmax + \rho_{u^\sigma}^0\, K}{\rho_{u^\sigma}^0 \,\rmax  + \rmax \, K}\right),
$$
and
\begin{equation}\label{le-minimum}
    \ub(t^*)=M\, \left(\frac{\rmax +K}{\rho_{u^\sigma}^0 +K}\right)^{1+\frac{\rmax }K}\left(\frac{\rho_{u^\sigma}^0}{\rmax }\right)^{\frac{\rmax }K} \to 0, \quad \text{ as } \sigma \to +\infty.
\end{equation}
 Thus, we can define $\sigma^*>\Sigma$ such that, for all $\sigma> \sigma^*$, $\ub(t^*)<\varepsilon$,  with $\varepsilon$ defined as in~\eqref{eq:def_epsilon}.

%We deduce that, for $\sigma> \sigma^*$, $u(t^*,\cdot)<\varepsilon$. 

Let us now define the solution $U=U(t,\theta)$ to the Cauchy problem
\begin{equation}\label{eq:ubb1}
\begin{cases}
	  \partial_t U = \partial_{\theta \theta} U + \rmax  \, U - f(U),	  \quad  & t>t^* ,\, \theta\in\R,
		\\
		U(t^*,\theta) = \ub(t^*), & \theta \in \R.
	\end{cases}
\end{equation}
From the above, for $\sigma>\sigma ^*$, $u^\sigma$ is a subsolution of~\eqref{eq:ubb1},  and the standard parabolic comparison principle implies $u^\sigma(t,\theta)\le U(t,\theta)$ for all $t\ge t^*$, $\theta \in \R$. Moreover, $U$ does not depend on   $\theta$, and, from $\ub(t^*)<\varepsilon$ and  since $f(0)=0$ and $\rmax s - f(s)<0$ for all $s\in (0,\varepsilon]$,  we have $U(t)\to 0$ as $t\to +\infty$. Hence $u^\sigma(t,\cdot) \to 0$ in $L^\infty(\R)$, as $t\to +\infty$, which concludes the proof. 
\end{proof}

\begin{rem} From the above proof, we may provide (at least when $r$ is bounded from below) a sufficient condition, relating the $L^\infty$ and $L^1$ norms  of a given initial datum $u_0$, for extinction. Indeed, in light of  \eqref{le-minimum}, it is sufficient to have
$$
\Vert u_0\Vert _{L^\infty} <\varepsilon g( \Vert u_0\Vert _{L^1}),\quad g(x):=\left(\frac{\rmax }{x}\right)^{\frac{\rmax }{K}}\left(\frac{x+K}{\rmax +K}\right)^{1+\frac{\rmax }{K}}, \quad \text{ and } \Vert u_0\Vert _{L^1}>\rmax .
$$
We note in particular that, for some $C=C(K,\rmax )>0$, $g(x) \sim  C x$ as $x\to +\infty$.
\end{rem}

\subsection{Survival may occur}\label{ss:proof-survival}

Before proving Theorem \ref{th:survival}, we start with some preparation. We denote $u=u(t,x)$ the solution to \eqref{eq:main} starting from some \lq\lq admissible'' $u_0$. If the fitness function is negative outside some interval, then the solution is expected to have an exponential decay as $\theta \to \pm \infty$. Precisely, the following holds.  

\begin{lemma} \label{lemma:u<M exp} Assume there are $R>0$ and $\rinf>0$ such that $r(\theta)\leq -\rinf$ for all $ \vert \theta\vert \geq R$. 

If there is $M>0$ such that
$$
u_0(\theta)\leq \bar{u}(\theta) := M \exp\left(-\sqrt{\rinf}(\vert \theta\vert -R)\right), \;\forall \vert \theta\vert  \geq R,\quad \text{ and } \quad 
u(t,R)\leq M, \; \forall t\geq 0,
$$
then 
$$
u(t,\theta)\leq \bar{u}(\theta) := M \exp\left(-\sqrt{\rinf}(\vert \theta\vert -R)\right), \;\forall t \geq 0, \forall \vert \theta\vert  \geq R.
$$
\end{lemma}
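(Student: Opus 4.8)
The plan is a barrier (comparison) argument carried out separately on each half-line $\{\theta\ge R\}$ and $\{\theta\le -R\}$. First I would reduce to the half-line $\theta\ge R$. Since $\rho_u(t)=\int_\R u(t,\theta)\,d\theta$ is invariant under the reflection $\theta\mapsto-\theta$, the function $\tilde u(t,\theta):=u(t,-\theta)$ solves the same type of equation, with fitness $\tilde r(\theta)=r(-\theta)$ still satisfying $\tilde r\le-\rinf$ on $\{|\theta|\ge R\}$, with $\tilde u(t,R)=u(t,-R)\le M$, and $\tilde u(0,\cdot)\le\bar{u}$ on $[R,+\infty)$ because $\bar{u}$ is even. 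Hence it suffices to prove $u\le\bar{u}$ on $[0,+\infty)\times[R,+\infty)$; the bound on $(-\infty,-R]$ then follows by applying the half-line result to $\tilde u$.

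On $\theta\ge R$ the function $\bar{u}(\theta)=M\exp(-\sqrt{\rinf}(\theta-R))$ is time-independent and satisfies $\partial_{\theta\theta}\bar{u}=\rinf\,\bar{u}$, i.e. $\partial_t\bar{u}-\partial_{\theta\theta}\bar{u}=-\rinf\,\bar{u}$, with $\bar{u}(R)=M$. On the other hand, using $u\ge0$ (Proposition~\ref{prop:u positive}), $f\ge0$, $\rho_u\ge0$, together with $r(\theta)\le-\rinf$ for $\theta\ge R$, equation~\eqref{eq:main} gives $\partial_t u-\partial_{\theta\theta}u=r(\theta)u-\rho_u(t)u-f(u)\le-\rinf\,u$ on $(0,+\infty)\times[R,+\infty)$. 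Subtracting, the difference $w:=u-\bar{u}$ satisfies
\[
\partial_t w-\partial_{\theta\theta}w+\rinf\,w\le 0 \qquad\text{on }(0,+\infty)\times(R,+\infty),
\]
with $w(0,\theta)=u_0(\theta)-\bar{u}(\theta)\le 0$ for $\theta\ge R$ and $w(t,R)=u(t,R)-M\le 0$ for all $t\ge 0$, by the hypotheses of the lemma.

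It remains to conclude $w\le 0$ by a maximum principle on the unbounded strip. I would fix an arbitrary $T>0$ and invoke the Gaussian envelope of Proposition~\ref{prop:u<gaussian}, which yields $w(t,\theta)\le u(t,\theta)\le C_T e^{-\mu_T\theta^2}\to 0$ as $\theta\to+\infty$, uniformly in $t\in[0,T]$. Then, if $m:=\sup_{[0,T]\times[R,+\infty)}w$ were positive, one could pick $R_\delta>R$ with $w<m/2$ on $[0,T]\times[R_\delta,+\infty)$, so that $m$ is attained on the compact set $[0,T]\times[R,R_\delta]$ at some point $(t_0,\theta_0)$ that cannot lie on $\{t=0\}$, $\{\theta=R\}$ (where $w\le 0$) nor $\{\theta=R_\delta\}$ (where $w<m/2$); hence $t_0\in(0,T]$ and $\theta_0\in(R,R_\delta)$, so $\partial_t w(t_0,\theta_0)\ge 0$ and $\partial_{\theta\theta}w(t_0,\theta_0)\le 0$, giving $\partial_t w-\partial_{\theta\theta}w+\rinf\,w\ge\rinf\,m>0$ at $(t_0,\theta_0)$ and contradicting the differential inequality. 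Therefore $w\le 0$ on $[0,T]\times[R,+\infty)$, and since $T>0$ is arbitrary, $u(t,\theta)\le\bar{u}(\theta)$ for all $t\ge 0$ and $\theta\ge R$; combined with the reflected statement this proves the lemma.

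The only non-routine point is this last step, namely justifying the comparison on an \emph{unbounded} spatial domain: the decay estimate of Proposition~\ref{prop:u<gaussian} is exactly what is needed to localize the argument to a compact region, after which the favorable sign of the zero-order coefficient $\rinf>0$ rules out a positive interior (or terminal-time) maximum. Everything else — the supersolution property of $\bar{u}$ and the pointwise differential inequality for $u$ — is a direct computation.
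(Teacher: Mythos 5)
Your proof is correct and takes essentially the same approach as the paper: reduce to the half-line $\theta\ge R$, note that $\bar u$ is the (time-independent) solution of the linear parabolic problem $\partial_t v=\partial_{\theta\theta}v-\rinf\,v$ with Dirichlet data $M$ at $\theta=R$, observe that $u$ is a subsolution, and conclude by parabolic comparison. Where the paper simply invokes the ``standard parabolic comparison principle'' on the half-line, you explicitly carry out the localization of the maximum principle using the Gaussian envelope from Proposition~\ref{prop:u<gaussian}, which is a sound way to justify comparison on the unbounded domain.
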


\begin{proof} It is enough to work on $(R,+\infty)$.
The function $\bar{u}=\bar{u}(\theta)$  is obviously the (constant in time) solution of the parabolic problem (with Dirichlet boundary conditions)
\begin{equation*}
\begin{cases}
	\dt v =\partial _{\theta\theta}v  -\rinf v,    \quad  & t>0 ,\, \theta\in (R,+\infty),
		\\
	v(t,R)=M,	 & t >0,\\
	v(0,\theta)=\bar u (\theta), & \theta \in (R,+\infty). 
	\end{cases}
\end{equation*}
From the assumptions, $u$ is a sub-solution of the above problem and the conclusion follows from the standard parabolic comparison principle. 
\end{proof}

Next, under some symmetry assumptions,  we have the following. 

\begin{lemma} \label{lemma:radially nonincreasing bound}
Assume both the fitness function $r$ and the initial datum $u_0$ are radially nonincreasing. Then, for all $t>0$, $u(t,\cdot)$ is radially nonincreasing and, for all $t\geq 0$, all $\theta\neq 0$, 
    \begin{equation} \label{eq:radially nonincreasing bound}
        u(t,\theta) \leq \frac{\rho(t)}{2\abs{\theta}},
    \end{equation}
    and
    \begin{equation} \label{eq:radially nonincreasing bound constant}
        u(t,\theta) \leq \frac{1}{2\abs{\theta}} \max(\rho_{u_0}, \rmax).
    \end{equation}
\end{lemma}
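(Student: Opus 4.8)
The plan is to establish, in order: (a) the reflection symmetry $u(t,\theta)=u(t,-\theta)$; (b) the radial monotonicity of $u(t,\cdot)$ for $t>0$; and then the two pointwise bounds \eqref{eq:radially nonincreasing bound} and \eqref{eq:radially nonincreasing bound constant}, which follow almost immediately from (b). For step (a), I would set $\tilde u(t,\theta):=u(t,-\theta)$ and observe that, since $r$ is even and $\rho_{\tilde u}(t)=\int_\R u(t,-\theta)\,d\theta=\rho_u(t)$, the function $\tilde u$ lies in $X_\infty$, starts from $u_0(-\cdot)=u_0$, and solves \eqref{eq:main}; the uniqueness part of Theorem~\ref{th:well-pos} (Proposition~\ref{prop:uniqueness}) then forces $\tilde u\equiv u$, so $u(t,\cdot)$ is even for every $t\geq 0$.

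For step (b) I would run a one–dimensional reflection (moving–plane) argument. Fix $\lambda>0$, put $u^\lambda(t,\theta):=u(t,2\lambda-\theta)$ and $v:=u-u^\lambda$. Since $\partial_{\theta\theta}u^\lambda(t,\theta)=(\partial_{\theta\theta}u)(t,2\lambda-\theta)$ and both $u$ and $u^\lambda$ feel the \emph{same} nonlocal coefficient $\rho_u(t)$, a direct computation yields, on $\{t>0,\ \theta>\lambda\}$,
\[
\partial_t v-\partial_{\theta\theta}v=\big(r(\theta)-\rho_u(t)-c(t,\theta)\big)\,v+\big(r(\theta)-r(2\lambda-\theta)\big)\,u^\lambda,
\]
where $c(t,\theta):=\int_0^1 f'\big(u^\lambda+\tau(u-u^\lambda)\big)(t,\theta)\,d\tau$ is continuous with $|c|\leq\|f'\|_{L^\infty}$ by Assumption~\ref{ass:f}. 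For $\theta>\lambda$ one has $|2\lambda-\theta|\leq|\theta|$, so $r(2\lambda-\theta)\geq r(\theta)$ because $r$ is radially nonincreasing, and since $u^\lambda\geq 0$ (Proposition~\ref{prop:u positive}) the last term is $\leq 0$; hence $v$ is a subsolution of a linear parabolic equation with coefficient $b(t,\theta):=r(\theta)-\rho_u(t)-c(t,\theta)$, bounded \emph{from above} by $A:=\rmax+\|f'\|_{L^\infty}$ (using $r\leq\rmax$, $\rho_u\geq 0$). On the parabolic boundary of $\{t>0,\ \theta>\lambda\}$ we have $v(t,\lambda)=0$ and $v(0,\theta)=u_0(\theta)-u_0(2\lambda-\theta)\leq 0$ (again by radial monotonicity of $u_0$ and $|2\lambda-\theta|\leq|\theta|$), while $v(t,\theta)\to 0$ as $\theta\to+\infty$, uniformly for $t$ in bounded sets, by the Gaussian decay of Proposition~\ref{prop:u<gaussian}. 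Comparing $v$ with the supersolution $\delta e^{At}$ on truncated cylinders $[0,T]\times[\lambda,R]$ (legitimate since $b\leq A$ and $b$ is continuous, hence bounded, on each such compact set), then letting $R\to+\infty$ and $\delta\downarrow 0$, I obtain $v\leq 0$, i.e. $u(t,\theta)\leq u(t,2\lambda-\theta)$ for all $t>0$, $\theta\geq\lambda$. Given $0\leq\theta_1<\theta_2$, the choice $\lambda=(\theta_1+\theta_2)/2$ gives $u(t,\theta_2)\leq u(t,\theta_1)$; combined with (a), $u(t,\cdot)$ is radially nonincreasing for every $t>0$ (and for $t=0$ by hypothesis on $u_0$).

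With (b) in hand, the two bounds are routine. For $t\geq 0$ and $\theta>0$, using that $u(t,\cdot)\geq 0$ is even and nonincreasing on $[0,+\infty)$,
\[
\rho(t)=\int_\R u(t,\eta)\,d\eta\ \geq\ 2\int_0^{\theta}u(t,\eta)\,d\eta\ \geq\ 2\theta\,u(t,\theta),
\]
which is \eqref{eq:radially nonincreasing bound} (the case $\theta<0$ follows by evenness). Finally, integrating \eqref{eq:main} in $\theta$ gives \eqref{eq:rho evolution}, whence $\rho'\leq\rmax\rho-\rho^2$ (using $r\leq\rmax$, $u\geq 0$, $f\geq 0$), and an ODE comparison yields $\rho(t)\leq\max(\rho_{u_0},\rmax)$ for all $t\geq 0$ (this is Proposition~\ref{prop: u < M, rho < M'} with $M_0'=\rho_{u_0}$); inserting this into \eqref{eq:radially nonincreasing bound} gives \eqref{eq:radially nonincreasing bound constant}.

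The main obstacle is step (b): the coefficient $b$ in the linearised equation for $v$ is only bounded from above—it may tend to $-\infty$ quadratically, e.g. for a quadratic fitness $r(\theta)=\rmax-\alpha^2\theta^2$—so the bare maximum principle on the unbounded half-line $\{\theta>\lambda\}$ does not apply directly. The remedy is the Phragm\'en--Lindel\"of-type comparison with the explicit barrier $\delta e^{At}$ carried out on bounded cylinders, which uses precisely the one-sided bound $b\leq A$ together with the Gaussian decay of $u$ at spatial infinity; everything else (the symmetry via uniqueness, the integration of monotonicity, and the ODE bound on $\rho$) is straightforward.
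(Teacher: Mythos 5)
Your proof is correct, and while steps (a) and (c) coincide with the paper's argument (evenness via uniqueness, then $\rho(t)\geq 2|\theta|u(t,\theta)$ from monotonicity, plus the ODE bound on $\rho$), your step (b) takes a genuinely different route. The paper differentiates \eqref{eq:main} in $\theta$ and applies the parabolic maximum principle directly to $v=\partial_\theta u$ on the half-line $\{\theta>0\}$, exploiting $r'\leq 0$ there and $v(t,0)=0$; this is shorter and avoids the two-parameter family of reflections, but it implicitly requires control of $\partial_\theta u$ at spatial infinity (which is available via Proposition~\ref{prop:du<gaussian}) and justification of differentiating the nonlocal equation. Your moving-plane argument with $u^\lambda(t,\theta)=u(t,2\lambda-\theta)$ and the barrier $\delta e^{At}$ on truncated cylinders avoids differentiating the equation altogether and handles the unboundedness of $r$ from below explicitly via the one-sided bound on the zero-order coefficient together with the Gaussian decay of Proposition~\ref{prop:u<gaussian} — a more self-contained treatment of the maximum principle on an unbounded domain, at the cost of extra bookkeeping. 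Both are valid; the paper's is more economical given that Proposition~\ref{prop:du<gaussian} has already been established.
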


\begin{proof} Since $(t,\theta)\mapsto u(t,-\theta)$ solves the same Cauchy problem, we have, by uniqueness, that, for any $t>0$, $u(t,\cdot)$ is radial. By differentiating the equation \eqref{eq:main} and denoting $v(t,\theta):=\partial _\theta u (t,\theta)$, we have
$$
 \partial_t v=\partial_{\theta\theta} v+r'(\theta)u+r(\theta)v-v\rho_u(t)-vf'(u). 
$$
Since $u(t,\cdot)$ is radial we have $v(t,0)=0$, and since $r'\leq 0$ on $(0,+\infty)$ we have 
\begin{equation*}
\begin{cases}
	\partial_t v\leq \partial_{\theta\theta} v+v( r(\theta) -\rho_u(t)-f'(u)),    \quad  & t>0 ,\, \theta\in (0,+\infty),
		\\
	v(t,0)=0,	 & t >0,\\
	v(0,\theta)\leq 0, & \theta \in (0,+\infty). 
	\end{cases}
\end{equation*}
From the standard parabolic principle we deduce $v(t,\theta)\leq 0$ for all $t> 0$, $\theta \in (0,+\infty)$, and thus, for all $t>0$, $u(t,\cdot)$ is radially decreasing. Knowing this, we deduce that, for any $\theta\in \R$, $\rho(t)\geq \int_{-\vert \theta\vert}^{\vert \theta\vert} u(t,\theta')\,d\theta' \geq 2\vert \theta\vert u(t,\theta)$, that is \eqref{eq:radially nonincreasing bound} from which \eqref{eq:radially nonincreasing bound constant} immediately follows using the upper bound on $\rho$ of Proposition \ref{prop: u < M, rho < M'}.
\end{proof}

We are now in the position to complete the proof of Theorem \ref{th:survival}.

\begin{proof}[Proof of Theorem \ref{th:survival}] Let us consider a radially nonincreasing initial datum $u_0\in C^0_c(\R, [0,+\infty))$ with $0<\rho_{u_0}<\rmax$. This initial mass being fixed, we plan to construct an {\it ad hoc} example allowing survival. 

To do so we consider radially nonincreasing fitness functions $r=r_\alpha$ such that 
\begin{equation}\label{r-takes-the-form-alpha}
r(\theta)\begin{cases}=\rmax -\alpha^2 \theta ^2, \quad &\forall \theta \in \left(-\frac{\sqrt{2\rmax }}{\alpha},\frac{\sqrt{2\rmax }}{\alpha}\right),\\
\geq \rmax -\alpha^2 \theta ^2, &\forall \theta \notin \left(-\frac{\sqrt{2\rmax }}{\alpha},\frac{\sqrt{2\rmax }}{\alpha}\right),
\end{cases}
\end{equation}
for any $0< \alpha  \ll 1$ (which will be related to $\varepsilon$ measuring the Allee effect later). Without loss of generality we may assume
\begin{equation}\label{support}
\text{supp } u_0 \subset (-\sqrt{2\rmax}\alpha^{-1},\sqrt{2\rmax}\alpha^{-1}).
\end{equation}
In the sequel we denote by $c_i$ ($i=1,\dots$) various positive constants that depend only on $\rmax $ but we keep evident the crucial dependence on $\alpha$.

 From \eqref{eq:radially nonincreasing bound constant}, we  get $u(t,\frac{\sqrt{2\rmax}}{\alpha}) \leq c_1\alpha$. Since, for all $\theta \geq \frac{\sqrt{2\rmax}}{\alpha}$, $r(\theta) \leq r(\frac{\sqrt{2\rmax}}{\alpha}) = -\rmax$, we can use these two bounds and Lemma \ref{lemma:u<M exp}, thanks to \eqref{support}, to get
\begin{equation} \label{eq:u<Kr exp (proof persistence)}
    u(t,\theta) \leq c_1 \alpha \exp\left(- \sqrt  {\rmax} \left(\theta-\frac{c_2}{\alpha}\right)\right), \quad \forall t\geq 0, \forall \theta \geq  \frac{\sqrt{2\rmax}}{\alpha}=\frac{c_2}{\alpha}.
\end{equation}

Integrating equation~\eqref{eq:main} over $\theta \in \R$, recall that we get
\begin{equation}\label{edo-rho}
   \rho_u'(t)  =  \int _{-\infty}^{+\infty} r(\theta)u(t,\theta)\, d\theta - \rho_u^2(t) - \int_{-\infty}^{\infty} f(u(t,\theta))\, d\theta.
  \end{equation}
To estimate the second integral term in \eqref{edo-rho}, we write
\begin{align*}
    \nonumber
 \int_{-\infty}^{+\infty} f(u(t,\theta))\,d\theta=2   \int_0^{+\infty} f(u(t,\theta))\,d\theta
    &= 2\int_0^{\frac{c_2}\alpha} f(u(t,\theta))\,d\theta + 2\int_{\frac{c_2}{\alpha}}^{+\infty} f(u(t,\theta))\,d\theta,
    \\
    &\leq \frac{2c_2 \Vert f\Vert_{L^\infty}}{\alpha } + 2  \|f'\|_{L^\infty}  \int_{\frac{c_2}{\alpha}}^{+\infty} u(t,\theta)\, d\theta.
\end{align*}
 Since \eqref{eq:u<Kr exp (proof persistence)} provides
\begin{equation}\label{eq:integral u 3R/2 (proof persistence)}
\int_{\frac{c_2}{\alpha}}^{+\infty} u(t,\theta) \,d\theta\leq \int_{\frac{c_2}{\alpha}}^{+\infty} c_1 \alpha \exp\left(- \sqrt{\rmax} \left(\theta-\frac{c_2}{\alpha}\right)\right)\,d\theta=\frac{c_1}{ \sqrt{\rmax}}\alpha,
\end{equation}
we end up with
\begin{equation} \label{eq:int f bound (proof persistence)}
    \int_{-\infty}^{+\infty} f(u(t,\theta))\,d\theta
   \leq \frac{c_3 \Vert f\Vert_{L^\infty}}{\alpha}+ c_4  \|f'\|_{L^\infty}  \alpha.
\end{equation}
To estimate the first integral term in \eqref{edo-rho}, we write
$$
    \int_{-\infty}^{+\infty} r(\theta) u(t,\theta) \,d\theta=2\int_{0}^{+\infty} r(\theta) u(t,\theta) \,d\theta
    = 2\int_{0}^{\frac{c_2}{\alpha}} r(\theta) u(t,\theta) \, d\theta +2 \int_{\frac{c_2}{\alpha}}^{+\infty} r(\theta) u(t,\theta) \, d\theta.
$$
Since both $r$ and $u(t,\cdot)$ are  nonincreasing, we can use  the Chebyshev integral  inequality to get  (recall that $c_2=\sqrt{2r_{max}}$)
\begin{align}
    \nonumber
    \int_{0}^{\frac{c_2}{\alpha}} r(\theta) u(t,\theta) \,d\theta
    &\geq \frac{\alpha}{c_2} \left(\int_{0}^{\frac{c_2}{\alpha}} r(\theta) \, d\theta\right) \left(\int_{0}^{\frac{c_2}{\alpha}} u(t,\theta) \,d\theta\right)
    \\
    \nonumber
    &= \frac{\alpha}{c_2} 
    \left( \rmax\frac{c_2}{\alpha} - \frac{\alpha^2}{3}\left(\frac{c_2}{\alpha}\right)^3\right)
    \left(\frac{\rho_u(t)}{2}-\int_{\frac{c_2}{\alpha}}^{+\infty}u(t,\theta)d\theta\right)
    \\
    \nonumber
    &= \frac 13 \rmax
    \left(\frac{\rho_u(t)}{2}-\int_{\frac{c_2}{\alpha}}^{+\infty}u(t,\theta)d\theta\right)
    \\
    \label{eq:int 0 3R/2 ru (proof persistence)}
    &\geq \frac{\rmax}{6}\rho_u(t)-c_5\alpha,
\end{align}
where we used the bound \eqref{eq:integral u 3R/2 (proof persistence)}. Since \eqref{eq:u<Kr exp (proof persistence)} provides (note that $r\leq 0$ in $(\frac{c_2}{\alpha},+\infty)$)
\begin{align}
    \nonumber
    \int_{\frac{c_2}{\alpha}}^{+\infty} r(\theta) u(t,\theta) \, d\theta
    &\geq c_1\alpha \int_{\frac{c_2}{\alpha}}^{+\infty} \left(\rmax-\alpha^2\theta^2\right) 
    \exp\left(- \sqrt{\rmax} \left(\theta-\frac{c_2}{\alpha}\right)\right)\, d\theta
    \\
    \nonumber
    &=  c_1\alpha\left((r_{max}-c_2^2)I_0-2c_2\alpha I_1-\alpha^2 I_2 \right)\geq -c_6 \alpha
\end{align}
up to reducing $\alpha$ if necessary  (and where $I_k:=\int_0^{+\infty}z^ke^{-\sqrt{r_{max}}z}dz$). We end up with
\begin{equation}
\label{int-r-u-par-dessous}
\int_{-\infty}^{+\infty} r(\theta)u(t,\theta)\,d\theta\geq c_7\rho_u(t)-c_8 \alpha.
\end{equation}

Hence, inserting \eqref{eq:int f bound (proof persistence)} and \eqref{int-r-u-par-dessous} into \eqref{edo-rho}, we reach
\begin{equation*}
\rho _u '(t)\geq c_7 \rho_u(t)-\rho_u^2(t)-c_8\alpha-c_4  \|f'\|_{L^\infty} 
\alpha-\frac{c_3\Vert f\Vert _{L^\infty}}{\alpha},
\end{equation*}
where $f$ actually stands for $f_\varepsilon$ as in Assumption~\ref{ass:perturb} so that $\|f'\|_{L^\infty}=O(1)$  and $\|f\|_{L^\infty} = O(\varepsilon)$ as $\varepsilon \to 0$.  Because of that, we now choose the scaling $\alpha=\varepsilon ^\gamma$ with $0<\gamma<1$ (or $\alpha=A\varepsilon$ with $A>0$ sufficiently large for the following argument to hold, see Remark \ref{rem:gamma=1})  so that $c_8\alpha+c_4  \|f'\|_{L^\infty} \alpha+\frac{c_3\Vert f\Vert _{L^\infty}}{\alpha}=o(1)$ as $\varepsilon \to 0$. As a result, there is $\varepsilon_0>0$ such that, for any $0<\varepsilon<\varepsilon_0$, we have
\begin{equation*}
\rho _u '(t)\geq c_7 \rho_u(t)-\rho_u^2(t)-a, \quad \rho_u(0)=\rho_{u_0},
\end{equation*}
where $0<a\ll 1$ so that $\rho\mapsto c_7\rho-\rho^2-a$ has two positive real roots, the smallest one being denoted $\rho ^*$ and satisfying $0<\rho^*<\rho_{u_0}$. From the comparison principle for ODEs, we deduce that, for any $t\geq 0$, $\rho_u(t)\geq \rho^*>0$, which concludes the proof. 
\end{proof}

\subsection{Survival for large selection}\label{ss:survival-large-selection}

In this subsection, we let Assumption \ref{ass:large select} hold. We will note $u$ the solution to the problem (\ref{eq:main}) where the fitness function $r$ has the quadratic form $r(\theta)=\rmax-\alpha^2\theta^2$, with $1\leq \rmax \leq \alpha^2 \leq 2\rmax$.  The solution $u$ will  solely depend on $\rmax$, $\alpha$, the initial datum $u_0$, and the Allee effect $f$, the crucial scaling parameter being $r_{max}$ (see Assumption \ref{ass:large select}). We will also note $C_1$, $C_2$, $C_3$, ... positive constants which do not depend on any of the parameters.

We will show that, as we take a large $\rmax$ and after a short time elapses, we can make $u(t,\theta)$ as large as necessary in the range $\theta\in[-\frac{3\sqrt{\rmax}}{2\alpha}, \frac{3\sqrt{\rmax}}{2\alpha}]$ and as small as necessary outside of it.
From there, we can apply the same arguments presented in the proof of Theorem \ref{th:survival} to show that $u$ must persist.

We start with a simple bound of $u$ over a finite time interval which depends on $\alpha$.

\begin{lemma} \label{lemma:u<exp(tr)}
     For all $\theta\in\R$ and $0\leq t \leq \frac{1}{3\alpha}$, the solution $u$ satisfies:
     $u(t,\theta) \leq 2\rmax e^{t(\rmax - \frac{3\alpha^2}{4}\theta^2)}$.
\end{lemma}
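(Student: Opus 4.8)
The plan is to bound $u$ from above by a function that is constant in time equal to its initial value, but that we instead build as a supersolution to a \emph{linearized} version of \eqref{eq:main} where we drop the good negative terms $-u\rho_u$ and $-f(u)$ (both nonnegative by Proposition~\ref{prop:u positive} and Assumption~\ref{ass:f}), keeping only the quadratic fitness $r(\theta)=\rmax-\alpha^2\theta^2$. So first I would note that $u$ is a subsolution of
\begin{equation*}
\partial_t v - \partial_{\theta\theta}v = (\rmax-\alpha^2\theta^2)\,v, \qquad t>0,\ \theta\in\R,
\end{equation*}
with $v(0,\cdot)=u_0$, and then replace $u_0$ by the constant $M_0:=\|u_0\|_{L^\infty}\le 2\rmax$ (Assumption~\ref{ass:large select}), so it suffices to control the solution $\bar v$ of this linear problem starting from the constant $2\rmax$.

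The main step is to exhibit an explicit supersolution of the linear equation of the form $\bar v(t,\theta)\le 2\rmax\, e^{t(\rmax-\beta\alpha^2\theta^2)}$ with $\beta=3/4$, valid for $0\le t\le \frac{1}{3\alpha}$. Plugging $w(t,\theta):=2\rmax\,e^{t(\rmax-\beta\alpha^2\theta^2)}$ into the operator $\partial_t - \partial_{\theta\theta} - (\rmax-\alpha^2\theta^2)$, one computes
\begin{equation*}
\partial_t w - \partial_{\theta\theta}w - (\rmax-\alpha^2\theta^2)w
= w\Big[(\rmax-\beta\alpha^2\theta^2) - (2\beta\alpha^2 t)^2\theta^2 + 2\beta\alpha^2 t - \rmax + \alpha^2\theta^2\Big],
\end{equation*}
so the bracket equals $\alpha^2\theta^2\big[(1-\beta) - 4\beta^2\alpha^2 t^2\big] + 2\beta\alpha^2 t$, which is nonnegative as soon as $(1-\beta) \ge 4\beta^2\alpha^2 t^2$, i.e. $4\beta^2\alpha^2 t^2 \le 1-\beta$. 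With $\beta=3/4$ this reads $\tfrac{9}{4}\alpha^2 t^2 \le \tfrac14$, i.e. $\alpha t \le \tfrac13$, exactly the stated time window. Hence $w$ is a supersolution on $[0,\tfrac1{3\alpha}]\times\R$, and since $w(0,\theta)=2\rmax \ge u_0(\theta)$, the parabolic comparison principle (applicable because, by Proposition~\ref{prop:u<gaussian}, $u$ decays and $w$ stays above it at spatial infinity — or more simply, both are controlled and one compares $u$ with $w$ via the maximum principle on truncated domains) yields $u(t,\theta)\le w(t,\theta)$ for all such $(t,\theta)$.

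The only point requiring a little care is the justification of the comparison on the unbounded domain $\R$: one should invoke Proposition~\ref{prop:u<gaussian} to know $u(t,\theta)\to0$ as $|\theta|\to\infty$ uniformly on $[0,\tfrac1{3\alpha}]$, while $w$ is bounded below away from $0$ on any finite slab — actually $w\to 0$ too as $|\theta|\to\infty$ for $t>0$, but $w(0,\cdot)\equiv 2\rmax$, so one compares $u-w\le 0$ using that $\limsup_{|\theta|\to\infty}(u-w)(t,\theta)\le 0$ uniformly in $t$, together with the bounded (in fact sign-definite) coefficient $r(\theta)-\alpha^2\theta^2+\rmax$... more precisely the coefficient $(\rmax-\alpha^2\theta^2)$ is merely bounded above, which is exactly the setting of Lemma~\ref{lemma:u<e^(Ct)*gaussian} and of \cite[Chapter~2, Theorem~16]{friedman-parabolic}, so the comparison is licit. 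I expect this verification to be entirely routine; the substantive content is purely the algebraic choice $\beta=3/4$ making the bracket nonnegative precisely on $\{\alpha t\le 1/3\}$, which is why the lemma is stated with these particular constants.
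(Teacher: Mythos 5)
Your proposal is correct and follows essentially the same approach as the paper: both construct the explicit candidate supersolution $w(t,\theta)=2\rmax\,e^{t(\rmax-\frac{3}{4}\alpha^2\theta^2)}$, verify by direct differentiation that $\partial_t w-\partial_{\theta\theta}w\ge(\rmax-\alpha^2\theta^2)w$ on $[0,\tfrac{1}{3\alpha}]\times\R$ (the paper's bracket $\rmax-\frac{3\alpha^2}{4}(1+3t^2\alpha^2)\theta^2+\frac{3t\alpha^2}{2}$ is your bracket specialized at $\beta=3/4$), and conclude by comparison with $w(0,\cdot)\equiv 2\rmax\ge u_0$. Your extra remark justifying the comparison on the unbounded domain via the Gaussian decay of $u$ (Proposition~\ref{prop:u<gaussian} / Lemma~\ref{lemma:u<e^(Ct)*gaussian}) is left implicit in the paper but is indeed the standard and correct justification.
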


\begin{proof}
    We let $\bar{u}(t,\theta) := 2\rmax e^{t(\rmax - \frac{3\alpha^2}{4}\theta^2)}$ for all $(t,\theta)\in[0,\frac{1}{3\alpha}]\times\R$, and we can verify that
    \begin{equation*}
        \dt \bar{u} - \dthth \bar{u} = \left[\rmax - \frac{3\alpha^2}{4}(1+3t^2\alpha^2)\theta^2 + \frac{3t\alpha^2}{2}\right] \bar{u},
    \end{equation*}
    over $(0,\frac{1}{3\alpha}]\times\R$.
    Since $0<t\leq \frac{1}{3\alpha}$, we have $1+3t^2\alpha^2 \leq \frac{4}{3}$, and thus $\dt \bar{u} - \dthth \bar{u} \geq (\rmax-\alpha^2\theta^2)\bar{u}$.
    Note as well that $\bar{u}(0,\theta) = 2\rmax \geq u_0(\theta)$ for all $\theta\in\R$.
    Applying the comparison principle, we get $u(t,\theta)\leq \bar{u}(t,\theta)$ for all $(t,\theta)\in[0,\frac{1}{3\alpha}]\times\R$.
\end{proof}

Next, we show that, after a short time, the integrals of $u(t,\theta)$ and $r(\theta)u(t,\theta)$ outside the range $[-\frac{3\sqrt{\rmax}}{2\alpha}, \frac{3\sqrt{\rmax}}{2\alpha}]$ go vanishingly small as $\rmax$ grows larger.

\begin{prop} \label{prop:int u and int ru}
	The solution $u$ verifies:
    \begin{equation*}
        \lim_{\rmax\to+\infty} \int_{|\theta|\geq\frac{3\sqrt{\rmax}}{2\alpha}} u(t,\theta) d\theta
        = \lim_{\rmax\to+\infty} \int_{|\theta|\geq\frac{3\sqrt{\rmax}}{2\alpha}} r(\theta)u(t,\theta) d\theta
        = 0,
    \end{equation*}
    uniformly w.r.t. all parameters and $t\geq \frac{1}{3\alpha}$.
\end{prop}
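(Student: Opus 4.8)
The plan is to pin down, for $t\ge t_1:=\tfrac1{3\alpha}$, a pointwise upper bound on $u$ outside the window $|\theta|\ge R:=\tfrac{3\sqrt{\rmax}}{2\alpha}$ of the form $C\rmax\,e^{-c\sqrt{\rmax}}\,e^{-\frac{3\sqrt{\rmax}}{4}(|\theta|-R)}$ with $C,c>0$ \emph{universal}; since $\int_{|\theta|\ge R}e^{-\frac{3\sqrt{\rmax}}{4}(|\theta|-R)}\,d\theta=O(1/\sqrt{\rmax})$ and, on that set, $|r|=\alpha^2\theta^2-\rmax\le 2\rmax\theta^2$ with $\int_R^\infty\theta^2 e^{-\lambda(\theta-R)}\,d\theta=O(1/\lambda)$ as $\lambda=\tfrac{3\sqrt{\rmax}}{4}\to\infty$ (and $R$ stays bounded, $R\le\tfrac32$), both integrals will then be $O(\rmax^{3/2}e^{-c\sqrt{\rmax}})\to0$. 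The first thing to establish is that the total mass becomes universally $O(\rmax)$ after the short time $t_1$: integrating \eqref{eq:main} and using $\int_\R ru\le\rmax\rho_u$ together with $f\ge0$ gives $\rho_u'\le\rmax\rho_u-\rho_u^2$, hence the explicit bound $\rho_u(t)<\tfrac{\rmax}{1-e^{-\rmax t}}$ for every $t>0$ \emph{regardless of $\rho_{u_0}$}; since $t\ge t_1\ge\tfrac1{3\sqrt{2\rmax}}$ forces $e^{-\rmax t}\le e^{-\sqrt{\rmax}/(3\sqrt2)}\le\tfrac12$ once $\rmax$ is large, we get $\rho_u(t)<2\rmax$ for all $t\ge t_1$. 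By the radial bound \eqref{eq:radially nonincreasing bound} this yields $u(t,\theta)<\rmax/|\theta|$ for $t\ge t_1$; in particular, at the interior ``buffer'' radius $R'':=\tfrac{5\sqrt{\rmax}}{4\alpha}<R$ (where $r(R'')=-\tfrac{9\rmax}{16}<0$), one has $u(t,R'')<\rmax/R''\le\tfrac{4\sqrt2}{5}\rmax$ for all $t\ge t_1$.

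Next I would produce and propagate an exponential tail anchored at $R''$. Lemma~\ref{lemma:u<exp(tr)} at $t=t_1$ gives the $u_0$-free Gaussian seed $u(t_1,\theta)\le 2\rmax e^{\rmax/(3\alpha)}e^{-\frac\alpha4\theta^2}$; a completing-the-square computation — the downward parabola $\tfrac\alpha4\theta^2-\tfrac{3\sqrt{\rmax}}{4}(\theta-R'')$ has its minimum at $\theta=R$, of value $\tfrac{3\rmax}{8\alpha}>0$ — shows $u(t_1,\theta)\le 2\rmax e^{-\rmax/(24\alpha)}\,e^{-\frac{3\sqrt{\rmax}}{4}(|\theta|-R'')}$ on $\{|\theta|\ge R''\}$. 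Setting $M:=\max\!\big(2\rmax e^{-\rmax/(24\alpha)},\tfrac{4\sqrt2}{5}\rmax\big)\le2\rmax$, Lemma~\ref{lemma:u<M exp} — applied on $\{|\theta|\ge R''\}$ with initial time $t_1$ (legitimate since \eqref{eq:main} is autonomous and $u(t_1,\cdot)$ is radial), $\rinf=\tfrac{9\rmax}{16}$, the seed bound just obtained, and the boundary control $u(t,R'')\le M$ from the previous step — yields
\[
u(t,\theta)\le 2\rmax\,e^{-\frac{3\sqrt{\rmax}}{4}(|\theta|-R'')},\qquad t\ge t_1,\ |\theta|\ge R''.
\]

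To conclude, I split $|\theta|-R''=(|\theta|-R)+(R-R'')$ and use $\tfrac{3\sqrt{\rmax}}{4}(R-R'')=\tfrac{3\rmax}{16\alpha}\ge\tfrac{3}{16\sqrt2}\sqrt{\rmax}$ (here $\alpha\le\sqrt{2\rmax}$ enters), which upgrades the bound to $u(t,\theta)\le 2\rmax\,e^{-\frac{3}{16\sqrt2}\sqrt{\rmax}}\,e^{-\frac{3\sqrt{\rmax}}{4}(|\theta|-R)}$ on $\{|\theta|\ge R\}$. Integrating this, and separately multiplying by $2\rmax\theta^2\ge|r(\theta)|$ before integrating, gives
\[
\int_{|\theta|\ge R}u(t,\theta)\,d\theta\ \le\ \tfrac{16}{3}\sqrt{\rmax}\,e^{-\frac{3}{16\sqrt2}\sqrt{\rmax}},\qquad \Big|\int_{|\theta|\ge R}r(\theta)u(t,\theta)\,d\theta\Big|\ \lesssim\ \rmax^{3/2}\,e^{-\frac{3}{16\sqrt2}\sqrt{\rmax}},
\]
both tending to $0$ as $\rmax\to\infty$, uniformly in $\alpha\in[\sqrt{\rmax},\sqrt{2\rmax}]$, in $u_0$, in $f$, and in $t\ge t_1$.

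The main obstacle is the feedback between $u$ and $\rho_u$: because the family $(u_0^\sigma)$ has unbounded $L^1$ norm, $\rho_u(0)$ — and hence the boundary values $u(t,R)$ that must drive any comparison argument — is \emph{not} controlled by $\rmax$ alone. This is exactly what the logistic differential inequality $\rho_u'\le\rmax\rho_u-\rho_u^2$ neutralizes, flattening $\rho_u$ to $O(\rmax)$ after time $\sim1/\alpha$, uniformly in the data. The second, more quantitative difficulty is that a single exponential-tail comparison anchored directly at $R$ would only give $\int_{|\theta|\ge R}u\lesssim\rmax/\sqrt{\rmax}=\sqrt{\rmax}$, which does not vanish; inserting the buffer radius $R''$ a distance $\sim\sqrt{\rmax}/\alpha$ inside $R$ is precisely what supplies the decisive extra factor $e^{-c\sqrt{\rmax}}$ that beats the $O(\rmax)$ prefactor.
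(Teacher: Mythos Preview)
Your proof is correct and follows essentially the same architecture as the paper's: a Gaussian seed at $t_1=\tfrac1{3\alpha}$ from Lemma~\ref{lemma:u<exp(tr)}, the same buffer radius $R''=\tfrac{5\sqrt{\rmax}}{4\alpha}$, and propagation of an exponential tail via Lemma~\ref{lemma:u<M exp}. The only differences are in the auxiliary bounds: you control $\rho_u$ directly through the logistic inequality $\rho_u'\le\rmax\rho_u-\rho_u^2$ (giving $\rho_u(t)<\rmax/(1-e^{-\rmax t})$) and then bound the boundary value $u(t,R'')$ via the radial-monotonicity estimate~\eqref{eq:radially nonincreasing bound}, whereas the paper integrates Lemma~\ref{lemma:u<exp(tr)} to bound $\rho_u$ and then invokes Lemma~\ref{lemma: u < C rho} for a global $L^\infty$ bound $u\le C_4\rmax^{3/2}$; your route is slightly sharper and avoids that extra lemma, at the cost of using the radial symmetry explicitly (which is available under Assumption~\ref{ass:large select}). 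One harmless slip: the parabola $\tfrac{\alpha}{4}\theta^2-\tfrac{3\sqrt{\rmax}}{4}(\theta-R'')$ is upward, not downward, but your minimum computation is correct regardless.
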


\begin{proof}
    Let us first bound $\rho_u$ from above.
    Since we assume $\alpha\geq 1$, we have $\frac{1}{3\alpha^2} \leq \frac{1}{3\alpha}$.
    When $0< t \leq \frac{1}{3\alpha^2}$, we can integrate the inequality of Lemma \ref{lemma:u<exp(tr)} with respect to $\theta$, and get $\rho_u(t) \leq 2\rmax e^{\frac{\rmax}{3\alpha^2}} \sqrt{\frac{4\pi}{3t\alpha^2}} \leq C_1 \frac{\rmax}{\alpha\sqrt{3t}}$ by using the inequality $\frac{\rmax}{\alpha^2} \leq 1$ from the assumptions of this subsection. We may suppose  $C_1>1$.
    For $t\geq\frac{1}{3\alpha^2}$, we apply Proposition \ref{prop: u < M, rho < M'} (with $\frac{1}{3\alpha^2}$ as the initial time and $M'_0=C_1\rmax$), giving us $\rho_u(t) \leq \max(C_1\rmax,\rmax)=C_1\rmax$ for all $t\geq\frac{1}{3\alpha^2}$.

    We then prove a global bound on $u$ by applying arguments found in the proof of Proposition \ref{prop: u < M, rho < M'}.
    First, define $\bar{u}(t,\theta):=2\rmax e^{\rmax t}$ the solution to the equation $\dt \bar{u} - \dthth \bar{u} = \rmax\bar{u}$, which verifies $\bar{u}(0,\theta)=2\rmax \geq u_0(\theta)$.
    $u$ is a subsolution of this equation, and so $u\leq \bar{u}$ according to the comparison principle.
    When $0\leq t \leq \frac{2}{3\alpha^2}$, we thus have $u(t,\theta)\leq 2\rmax e^{\frac{2\rmax}{3\alpha^2}}\leq C_2\rmax$ for all $\theta\in\R$, using the assumption $\rmax \leq \alpha^2$.
    For $t\geq \frac{2}{3\alpha^2}$, we apply Lemma \ref{lemma: u < C rho} (with $\tau=\frac{1}{3\alpha^2}$) to get
    $u(t+\frac{1}{3\alpha^2},\theta)
	\leq e^{\frac{\rmax}{3\alpha^2}} \sqrt{\frac{3\alpha^2}{4\pi}} \rho_u(t) \leq C_3 \rmax^{3/2}$
	for all $t \geq \frac{1}{3\alpha^2}$, thanks to our bound on $\rho_u$ and the assumption $\rmax\leq\alpha^2\leq2\rmax$.
	Putting our two estimates of $u$ together, we have:
	\begin{equation}	 \label{eq:u<15r}
		u(t,\theta) \leq \max(C_2\rmax,C_3\rmax^{3/2}) \leq C_4\rmax^{3/2},
	\end{equation}
	for all $(t,\theta) \in [0,+\infty)\times\R$, since we suppose $\rmax\geq 1$.

    Now, we consider $t= \frac{1}{3\alpha}$ and $\vert \theta \vert \geq \sqrt{\frac{4r_{max}}{ 3 \alpha^2}}$. For those $\theta$,  Lemma \ref{lemma:u<exp(tr)} provides $u(\frac{1}{3\alpha},\theta) \leq 2\rmax \exp(\frac{1}{3\alpha}(\rmax - \frac{3\alpha^2}{4}\theta^2))$ so that
    \begin{equation*}
		u\left(\frac{1}{3\alpha},\theta\right) \leq 2\rmax \exp\left(-\sqrt{\frac{\rmax}{3}}\left(|\theta| - \sqrt{\frac{4\rmax}{3\alpha^2}}\right)\right),
	\end{equation*}
	after straightforward computations using the concavity of $\theta\mapsto \frac{1}{3\alpha}\left( r_{max}-\frac{3\alpha^2
    }{4}\theta^2\right)$. Up to increasing $r_{max}$ if necessary, we deduce that 
     \begin{equation*}
		u\left(\frac{1}{3\alpha},\theta\right) \leq C_4 \rmax^{3/2} \exp\left(-\sqrt{\frac{\rmax}{3}}\left(|\theta| - \sqrt{\frac{4\rmax}{3\alpha^2}}\right)\right).
	\end{equation*}
    
    Hence, recalling \eqref{eq:u<15r},
    an application of Lemma \ref{lemma:u<M exp} (with $t=\frac{1}{3\alpha}$ as \lq\lq initial time", $R=\frac{5\sqrt{\rmax}}{4\alpha}$, $\underline{r}=\frac{r_{max}}{3}$, $M=C_4r_{max}^{3/2}$) yields
	\begin{equation*}
		u(t,\theta) \leq C_4\rmax^{3/2} \exp\left(-\sqrt{\frac{\rmax}{3}}\left(|\theta| - \frac{5\sqrt{\rmax}}{4\alpha}\right)\right), \quad \forall t\geq \frac{1}{3\alpha},\quad \forall \vert \theta \vert \geq \frac{5\sqrt{\rmax}}{4\alpha}.
	\end{equation*}
As a result, for any $t\geq \frac{1}{3\alpha}$, 
	\begin{align*}
		\int_{|\theta|\geq\frac{3\sqrt{\rmax}}{2\alpha}} u(t,\theta) d\theta
		&\leq
		2C_4\rmax^{3/2}
		\int_{\frac{3\sqrt{\rmax}}{2\alpha}}^{+\infty}
		\exp\left(-\sqrt{\frac{\rmax}{3}}\left(\theta - \frac{5\sqrt{\rmax}}{4\alpha}\right)\right) d\theta
		\\
        &\leq C_5\rmax \exp\left(-\frac{\rmax}{4\sqrt{3}\alpha}\right)
		\leq C_{6} e^{-C_{7}\sqrt{\rmax}},
	\end{align*}
    since $1\leq r_{max}\leq \alpha^2\leq 2r_{max}$. This proves the first result of the proposition. Similarly, for any $t\geq \frac{1}{3\alpha}$, 
	\begin{align*}
		\int_{|\theta|\geq\frac{3\sqrt{\rmax}}{2\alpha}} |r(\theta) u(t,\theta)| d\theta
		&\leq
		2C_4\rmax^{3/2} \int_{\frac{3\sqrt{\rmax}}{2\alpha}}^{+\infty} (\rmax+\alpha^2\theta^2)
		\exp\left(-\sqrt{\frac{\rmax}{3}}\left(\theta - \frac{5\sqrt{\rmax}}{4\alpha}\right)\right) d\theta
		\\
		&=
		2C_4 e^{-\frac{\rmax}{4\sqrt{3}\alpha}} (6\sqrt{3}\alpha^2 + 9\rmax\alpha + \frac{13}{4}\sqrt{3} \rmax^2)
        \leq C_{8} e^{-C_{9}\sqrt{\rmax}},
	\end{align*}
%
  %  \Gn
  %  ***
  %  Par Wolfram:
  %  \begin{align*}
   %     &\rmax^{3/2} \int_{\frac{3\sqrt{\rmax}}{2\alpha}}^{+\infty} (\rmax+\alpha^2\theta^2)
	%	\exp\left(-\sqrt{\frac{\rmax}{3}}\left(\theta - \frac{5\sqrt{\rmax}}{4\alpha}\right)\right) d\theta
     %   \\
      %  &= \left.- \left(6\sqrt{3}\alpha^2 + 6x\alpha^2\sqrt{\rmax} + \sqrt{3} \rmax^2(1+\frac{\alpha^2 x^2}{\rmax})\right)
      %  e^{-\sqrt{\frac{\rmax}{3}}\left(x - \frac{5\sqrt{\rmax}}{4\alpha}\right)}\right|_{x=\frac{3\sqrt{\rmax}}{2\alpha}}^{+\infty}
   % \end{align*}
   % ***
   % 
    %
    using the assumptions $1\leq\rmax\leq\alpha^2\leq 2\rmax$ and the boundedness of $xe^{-\sqrt{x}}$, $x^{3/2}e^{-\sqrt{x}}$, and $x^2 e^{-\sqrt{x}}$ for $x\geq 0$.
    Hence, we get the second result of the proposition.
\end{proof}

We now prove that, at the small time $t=\frac{1}{3\alpha}$, $\rho$ grows arbitrarily large as $\rmax$ grows larger.

\begin{prop} \label{prop:rho>r} The mass $\rho_u=\rho_u(t)$ of the solution $u$ verifies 
    \begin{equation*}
        \lim_{\rmax\to+\infty} \rho_u\left(\frac{1}{3\alpha}\right) = +\infty,
    \end{equation*}
    uniformly w.r.t. all parameters.
\end{prop}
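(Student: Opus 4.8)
The plan is to bound $\rho_u(\tfrac1{3\alpha})$ from below by integrating a pointwise lower bound for $u(\tfrac1{3\alpha},\cdot)$ over a fixed central interval, the pointwise bound coming from a comparison in which the two ``bad'' terms of \eqref{eq:main}, the nonlocal competition $-\rho_u u$ and the Allee term $-f(u)$, are treated separately. Since $\|f\|_{L^\infty}\le 2\rmax$, one has $-f(u)\ge -2\rmax$, so $u$ is a supersolution of the \emph{linear} equation $\partial_t v-\partial_{\theta\theta}v=r(\theta)v-\rho_u(t)v-2\rmax$. The substitution $\widehat u(t,\theta):=e^{\int_0^t\rho_u}u(t,\theta)$ removes the nonlocal coefficient: $\widehat u$ is a supersolution of $\partial_t v-\partial_{\theta\theta}v=r(\theta)v-2\rmax e^{\int_0^t\rho_u}$. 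I would compare $\widehat u$ from below with $\underline v:=w-q$, where $w$ solves the \emph{free-reaction} problem $\partial_t w=\partial_{\theta\theta}w+r(\theta)w$, $w(0,\cdot)=u_0$, and $q(t):=C(e^{\rmax t}-1)$ with $C$ a constant bound for $2e^{\int_0^{1/(3\alpha)}\rho_u}$; then $q'\ge\rmax q+2\rmax e^{\int_0^t\rho_u}$, so $\underline v$ is a bounded subsolution of the same linear equation, and the parabolic maximum principle (in the form of Lemma~\ref{lemma:u<e^(Ct)*gaussian}) gives
\[
u(t,\theta)\ \ge\ e^{-\int_0^t\rho_u}\bigl(w(t,\theta)-q(t)\bigr),\qquad (t,\theta)\in[0,\tfrac1{3\alpha}]\times\R .
\]

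The point of introducing $w$ is that, $r$ being the quadratic potential $\rmax-\alpha^2\theta^2$, the operator $\partial_{\theta\theta}+r$ is explicitly diagonalizable: on $\R$ its principal eigenvalue is $\rmax-\alpha$ with Gaussian eigenfunction of width $\alpha^{-1/2}\ll1$. Using $u_0\ge\rmax\,\1_{[-1/2,1/2]}$ and comparing with the Dirichlet problem on a fixed interval $(-1,1)$ (on which that Gaussian essentially lives) I would get $w(\tfrac1{3\alpha},\theta)\ge\tfrac{\rmax}{2}e^{\lambda_1/(3\alpha)}\phi_1(\theta)$ on $(-1,1)$, with a variational estimate $\lambda_1\ge\rmax-2\alpha$ and $\phi_1>0$, $\|\phi_1\|_\infty=\phi_1(0)=1$. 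Since $\rmax-\lambda_1\le2\alpha$ one has $q(\tfrac1{3\alpha})\le Ce^{2/3}e^{\lambda_1/(3\alpha)}$, a \emph{bounded} multiple of $e^{\lambda_1/(3\alpha)}$, so on $\{\phi_1\ge c_0 e^{\int_0^{1/(3\alpha)}\rho_u}/\rmax\}$ the bracket is $\ge\tfrac{\rmax}{4}e^{-\int_0^{1/(3\alpha)}\rho_u}e^{\lambda_1/(3\alpha)}\phi_1(\theta)$; a crude curvature bound from $\phi_1''=(\lambda_1-r)\phi_1$ gives $\phi_1\ge\tfrac12$ on an interval of length $\gtrsim\rmax^{-1/2}$ around $0$, hence $\int_{-1}^1\phi_1\gtrsim\rmax^{-1/2}$. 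Integrating the pointwise bound and using $e^{\lambda_1/(3\alpha)}\ge1$,
\[
\rho_u\!\left(\tfrac1{3\alpha}\right)\ \ge\ c\,e^{-\int_0^{1/(3\alpha)}\rho_u}\bigl(\rmax^{1/2}-C'\bigr)
\]
for universal constants $c,C'>0$. Everything thus reduces to a \emph{uniform} bound on $\int_0^{1/(3\alpha)}\rho_u$.

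Near $t=0$ this is available: repeating the computations of the proof of Proposition~\ref{prop:int u and int ru} (the bound $\rho_u(t)\le C_1\rmax/(\alpha\sqrt{3t})$ on $(0,\tfrac1{3\alpha^2}]$, valid because it only uses $u_0\le2\rmax$) yields $\int_0^{1/(3\alpha^2)}\rho_u\le C_1$, uniformly — in particular independently of the possibly huge initial mass $\rho_{u_0}$. On the macroscopic slab $[\tfrac1{3\alpha^2},\tfrac1{3\alpha}]$ no such uniform bound holds in general (there $\rho_u$ may be of order $\rmax$, contributing $\gtrsim\sqrt\rmax$ to the integral, which is exactly the order of the reaction gain $e^{\lambda_1/(3\alpha)}$), so I would dichotomize on the threshold $\sqrt\rmax$: if $\rho_u\le\sqrt\rmax$ on all of $[\tfrac1{3\alpha^2},\tfrac1{3\alpha}]$, then $\int_{1/(3\alpha^2)}^{1/(3\alpha)}\rho_u\le\sqrt\rmax\cdot\tfrac1{3\alpha}\le\tfrac13$ (as $\alpha\ge\sqrt\rmax$), so $\int_0^{1/(3\alpha)}\rho_u\le C_1+\tfrac13$ and the displayed inequality gives $\rho_u(\tfrac1{3\alpha})\to+\infty$; otherwise $\rho_u$ exceeds $\sqrt\rmax$ somewhere, and I would restart the whole comparison from the \emph{last} time $\tau$ with $\rho_u(\tau)=\sqrt\rmax$, so that $\rho_u\le\sqrt\rmax$ on $[\tau,\tfrac1{3\alpha}]$ and the competition is again controlled there, combining this — when $\tau$ is so close to $\tfrac1{3\alpha}$ that the residual reaction gain $e^{\lambda_1(1/(3\alpha)-\tau)}$ is too weak — with the elementary continuity estimate $|\rho_u(\tfrac1{3\alpha})-\rho_u(\tau)|\le\sup_{[\tau,1/(3\alpha)]}|\rho_u'|\cdot(\tfrac1{3\alpha}-\tau)$, where $|\rho_u'|$ is bounded from \eqref{eq:rho evolution} using $\rho_u\le\sqrt\rmax$, the radial-monotonicity bound $u(t,\theta)\le\rho_u(t)/2|\theta|$ of Lemma~\ref{lemma:radially nonincreasing bound}, and the Gaussian tail of Lemma~\ref{lemma:u<exp(tr)}.

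The main obstacle is precisely this last point: because of the $u$–$\rho_u$ feedback the competition loss over the macroscopic time $\tfrac1{3\alpha}$ is genuinely comparable to the reaction gain, so no single ``soft'' subsolution argument can work in all cases, and one is forced into the dichotomy together with a careful (though elementary) tracking both of the size of the mass $\int_{-1}^1u(\tau,\cdot)$ available for the restart and of the admissible rates of change of $\rho_u$ near $t=\tfrac1{3\alpha}$; getting the threshold $\sqrt\rmax$, the residual-gain window, and the continuity window to overlap is the delicate bookkeeping step.
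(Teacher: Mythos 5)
Your strategy is in the right spirit and shares the paper's main ingredients: replace the Allee term by the worst case $-f(u)\ge-2\rmax$, remove the nonlocal coefficient by the factor $e^{\int_0^t\rho_u}$ (or, equivalently, build it into the subsolution), and compare with an explicit Gaussian of width $\alpha^{-1/2}$ localized on a fixed interval. However, there is a genuine gap, and its location is precisely the place you flag as the ``main obstacle''. You assert that ``the competition loss over the macroscopic time $\tfrac1{3\alpha}$ is genuinely comparable to the reaction gain, so no single soft subsolution argument can work in all cases'' and therefore set up a dichotomy on the threshold $\sqrt{\rmax}$, with a restart from the last crossing time $\tau$. That belief is incorrect, and the restart is never actually carried out (you acknowledge the ``delicate bookkeeping step'' yourself); as written, Case~2 is not a proof, because the lower bound on the mass $\int_{-1}^1u(\tau,\cdot)$ that is needed to seed the restart is never established.

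The missing idea is the \emph{logistic} upper bound on $\rho_u$. Integrating the equation and dropping the nonnegative $f$-term gives $\rho_u'\le\rmax\rho_u-\rho_u^2$, which, together with the short-time bound $\rho_u(t)\le C_1\rmax/(\alpha\sqrt{3t})$ for $t\le\frac1{3\alpha^2}$ that you already use, yields the explicit envelope $\rho_u(t)\le\rho_1(t)$ with $\rho_1\to\rmax$ \emph{exponentially fast}, so that $\int_0^t\rho_1(s)\,ds\le\rmax t+C_{10}$ for a universal $C_{10}$. Consequently $e^{\rmax t-\int_0^t\rho_1}\ge e^{-C_{10}}$ uniformly: the competition loss is not merely ``comparable'' to the reaction gain, it is cancelled to within a fixed multiplicative constant. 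Once this is in hand, a \emph{single} subsolution works: the paper takes $\tilde u(t,\theta)=\rmax\exp\bigl(\rmax t-\int_0^t\rho_1-\alpha(t+\theta^2/2)\bigr)$, which solves $\partial_t\tilde u-\partial_{\theta\theta}\tilde u=(\rmax-\alpha^2\theta^2-\rho_1)\tilde u$ exactly, and corrects by subtracting $\rmax e^{-\alpha/8}+2\rmax t$ to handle the Dirichlet boundary on $[-\tfrac12,\tfrac12]$ and the $-2\rmax$ source; at $t=\frac1{3\alpha}$ these corrections are $O(\sqrt\rmax)$ while $\int_{-1/2}^{1/2}\tilde u\gtrsim\rmax^{3/4}$, giving $\rho_u(\tfrac1{3\alpha})\ge C_{11}\rmax^{3/4}-C_{12}\sqrt\rmax$. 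In your setup the analogous fix is not to pick $q(t)=C(e^{\rmax t}-1)$ with $C\ge2e^{\int_0^{1/(3\alpha)}\rho_u}$ (which requires $\int\rho_u$ to be bounded, while it is in fact of order $\sqrt\rmax$), but to solve $q'-\rmax q=2\rmax e^{\int_0^t\rho_u}$ directly and plug in $\int_0^s\rho_u\le\rmax s+C_{10}$, yielding $q(t)\lesssim\rmax\,t\,e^{\rmax t}$, which at $t=\frac1{3\alpha}$ is $O(\sqrt\rmax\,e^{\rmax/(3\alpha)})$ and is dominated by $w(\frac1{3\alpha},0)\sim\rmax e^{(\rmax-\alpha)/(3\alpha)}$. (A minor slip: since $\phi_1$ has curvature of size $\alpha\sim\sqrt\rmax$ at the origin, the interval on which $\phi_1\ge\tfrac12$ has length $\gtrsim\rmax^{-1/4}$, not $\rmax^{-1/2}$; this only improves your final exponent and does not affect the conclusion.)
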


\begin{proof}
    As shown in the proof of Proposition \ref{prop:int u and int ru}, we have $\rho_u(t) \leq C_1\frac{\rmax}{\alpha\sqrt{3t}}$ for all $t\in(0,\frac{1}{3\alpha^2}]$ with $C_1>1$. 
    Integrating the main equation (\ref{eq:main}) with respect to $\theta$, we get the inequality $\rho_u'(t) \leq \rmax \rho_u(t) - \rho_u(t)^2$ for all $t>0$.
    We define the function $\bar{\rho}(t) := \frac{\rmax}{1-(1-\frac{1}{C_1})\exp(-\rmax(t-\frac{1}{3\alpha^2}))}$, which verifies $\bar{\rho}(\frac{1}{3\alpha^2})=C_1\rmax \geq \rho_u(\frac{1}{3\alpha^2})$, and $\bar{\rho}'(t) = \rmax \bar{\rho}(t) - \bar{\rho}(t)^2$ for all $t>\frac{1}{3\alpha^2}$, and thus $\rho_u(t) \leq \bar{\rho}(t)$ for all $t>\frac{1}{3\alpha^2}$ by the comparison principle.
    We can write, for $t>\frac{1}{3\alpha^2}$:
    \begin{align*}
        \bar{\rho}(t)
        &= \rmax - \rmax\left(1 - \frac{1}{1-(1-\frac{1}{C_1})\exp(-\rmax(t-\frac{1}{3\alpha^2}))}\right)
        \\
        &= \rmax + \rmax \frac{(1-\frac{1}{C_1})\exp(-\rmax(t-\frac{1}{3\alpha^2}))}{1-(1-\frac{1}{C_1})\exp(-\rmax(t-\frac{1}{3\alpha^2}))}
        \\
        &\leq \rmax + \rmax \frac{(1-\frac{1}{C_1})\exp(-\rmax(t-\frac{1}{3\alpha^2}))}{1-(1-\frac{1}{C_1})}
        = \rmax + (C_1-1)\rmax e^{-\rmax(t-\frac{1}{3\alpha^2})}.
    \end{align*}
    Thus, defining the following continuous function
    \begin{equation*}
        \rho_1(t):= 
        \begin{cases}
            C_1\frac{\rmax}{\alpha\sqrt{3t}}
            &\text{ if } 0< t \leq \frac{1}{3\alpha^2},
            \\
            \rmax + (C_1-1)\rmax e^{-\rmax(t-\frac{1}{3\alpha^2})}
            &\text{ if } t > \frac{1}{3\alpha^2},
        \end{cases}
    \end{equation*}
    we have $\rho_1(t) \geq  \rho_u(t)$ for all $t>0$.
    We also note that $\rho_1 \geq \rmax$ since $C_1>1$.

    We now estimate the term $\int_0^t \rho_1(s)ds$. We may assume $t\geq \frac{1}{3\alpha^2}$. Obviously $\int_0^{\frac{1}{3\alpha^2}} \rho_1(s)ds = C_1\frac{2\rmax}{3\alpha^2} \leq C_1$,
    and
    $\int_{\frac{1}{3\alpha^2}}^t \rho_1(s)ds
    = \rmax(t-\frac{1}{3\alpha^2}) + (C_1-1)(1 - e^{-\rmax(t-\frac{1}{3\alpha^2})})
    \leq \rmax t + C_1-1$
    so that
    \begin{equation} \label{eq:int rho1<rt+18}
        \int_0^t \rho_1(s)ds
        = \int_0^{\frac{1}{3\alpha^2}} \rho_1(s)ds + \int_{\frac{1}{3\alpha^2}}^t \rho_1(s)ds
        \leq \rmax t + C_{10}.
    \end{equation}

   We can now prove the bound from below we are seeking.
    Define, for all $(t,\theta) \in (0,+\infty)\times\R$:
    \begin{equation*}
        \tilde{u}(t,\theta) := \rmax \exp\left(\rmax t - \int_0^t\rho_1(s)ds - \alpha(t+\frac{\theta^2}{2})\right).
    \end{equation*}
    Obviously
    $\dt\tilde{u}(t,\theta) = (\rmax-\rho_1(t) - \alpha) \tilde{u}(t,\theta)$
    and
    $\dthth\tilde{u}(t,\theta) = (-\alpha + \alpha^2\theta^2)\tilde{u}(t,\theta)$,
    and thus
    $\dt\tilde{u}(t,\theta) - \dthth\tilde{u}(t,\theta)
    = \left( \rmax - \alpha^2\theta^2 - \rho_1(t)\right)\tilde{u}(t,\theta).$
    Since $\rho_1 \geq \rmax$, we have $\rmax-\alpha^2\theta^2-\rho_1(t) \leq 0$ for all $(t,\theta)\in(0,+\infty)\times\R$.
    Therefore, defining the function $\underline{u}(t,\theta) := \tilde{u}(t,\theta) - \rmax e^{-\frac{\alpha}{8}} - 2\rmax t$, we get the inequality:
    \begin{align*}
        \dt\underline{u}(t,\theta) - \dthth\underline{u}(t,\theta)
        &= \left( \rmax - \alpha^2\theta^2 - \rho_1(t)\right)\left(\underline{u}(t,\theta) + \rmax e^{-\frac{\alpha}{8}} + 2\rmax t\right) - 2\rmax
        \\
        &\leq \left( \rmax - \alpha^2\theta^2 - \rho_1(t)\right)\underline{u}(t,\theta) - 2\rmax.
    \end{align*}
    Note that $\underline{u}(t,\frac{1}{2})=\underline{u}(t,-\frac{1}{2}) \leq 0$ for all $t>0$, and $\underline{u}(0,\theta) \leq \rmax$.
    Moreover, since $\norm{f}_{L^\infty([0,+\infty))}\leq 2\rmax$ (see Assumption \ref{ass:large select}), one has $r(\theta)u - \rho_u(t)u - f(u) \geq (\rmax - \alpha^2\theta^2 - \rho_1(t))u - 2\rmax$ so that 
    \begin{equation*}
   \begin{cases}
            \dt u - \dthth u
            \geq (\rmax - \alpha^2\theta^2 - \rho_1(t))u - 2\rmax, &  (t,\theta) \in (0,+\infty)\times(-\frac 12,\frac 12),
            \\
            u(t,\pm 1/2) \geq 0,
            & t>0,
            \\
            u(0,\theta) = u_0(\theta) \geq \rmax, & \theta\in[-\frac 12,\frac 12].
        \end{cases}
    \end{equation*}
    From the comparison principle, we thus get $u(t,\theta) \geq \underline{u}(t,\theta)$ for all $(t,\theta)\in[0,+\infty)\times[-1/2,1/2]$.
    Integrating with respect to $\theta$, we have the inequality $\rho_u(t) \geq \int_{-1/2}^{1/2} u(t,\theta)d\theta \geq \int_{-1/2}^{1/2} \underline{u}(t,\theta)d\theta$.
    %From the definition of $\underline{u}$, we can write:
    %\begin{align*}
      %  \int_{-1/2}^{1/2} \underline{u}(t,\theta)d\theta
       % &= \int_{-1/2}^{1/2} \tilde{u}(t,\theta)d\theta - 2\rmax e^{-\frac{\sqrt{\alpha}}{8}} - 2\rmax t.
    %\end{align*}
    Note that
    \begin{align*}
        \int_{-1/2}^{1/2} e^{-\frac{\alpha}{2}\theta^2} d\theta
        &= \int_{-\infty}^{+\infty} e^{-\frac{\alpha}{2}\theta^2} d\theta - 2\int_{1/2}^{+\infty} e^{-\frac{\alpha}{2}\theta^2} d\theta
        \\
        &\geq \sqrt{\frac{2\pi}{\alpha}} - 2\int_{1/2}^{+\infty} e^{-\frac{\alpha}{2}(\theta-\frac{1}{4})} d\theta
        = \sqrt{\frac{2\pi}{\alpha}} - \frac{4}{\alpha} e^{-\frac{\alpha}{8}},
    \end{align*}
    where we used the fact that $\theta^2 \geq \theta - \frac{1}{4}$ for all $\theta\in\R$.
    Therefore, from the definition of $\tilde{u}$, we have
    \begin{align*}
        \int_{-1/2}^{1/2} \underline{u}(t,\theta)d\theta
        &\geq \rmax e^{\rmax t - \int_0^t\rho_1(s)ds - t\alpha} \left( \sqrt{\frac{2\pi}{\alpha}} - \frac{4}{\alpha} e^{-\frac{\alpha}{8}} \right) 
        - \rmax e^{-\frac{\alpha}{8}}
        - 2\rmax t.
    \end{align*}
    Using the estimate (\ref{eq:int rho1<rt+18}) of $\rho_1$ we found above, we have, at $t=\frac{1}{3\alpha} \geq \frac{1}{3\alpha^2}$:
    \begin{align*}
        \int_{-1/2}^{1/2} \underline{u}\left(\frac{1}{3\alpha},\theta\right)d\theta
        &\geq
        \rmax e^{-C_{10} - \frac{1}{3}} \sqrt{\frac{2\pi}{\alpha}}
        - \frac{4\rmax}{\alpha} e^{-\frac{\alpha}{8}} 
        - \rmax e^{-\frac{\alpha}{8}}
        - \frac{2\rmax}{3\alpha}
        \\
        &\geq 
         \sqrt   {\sqrt 2 \pi} e^{-C_{10}-\frac{1}{3}} \rmax^{3/4}
        - 4\sqrt{\rmax}
        - \rmax e^{-\frac{\sqrt{\rmax}}{8}}
        - \sqrt{\rmax}
        \\
        &\geq
        C_{11} \rmax^{3/4}
        - C_{12}\sqrt{\rmax}
    \end{align*}
    where, to simplify the estimate, we used the assumption $1\leq\rmax\leq\alpha^2\leq 2r_{max}$ and the inequality $x e^{-x} \leq 1$ for all $x\geq 0$.
    We thus have $\rho_u(\frac{1}{3\alpha}) \geq C_{11} \rmax^{3/4} - C_{12} \sqrt{\rmax}$, giving us the desired limit as $\rmax\to+\infty$.
\end{proof}

We can now move on to the proof of Theorem \ref{th:survival large alpha}.

\begin{proof}[Proof of Theorem \ref{th:survival large alpha}]
    Let Assumption \ref{ass:large select} hold.
    In the sequel, we will only consider $\sigma\geq 1$. Thanks to Proposition \ref{prop:int u and int ru}, we also suppose $\rmax$ is large enough so that $\int_{|\theta|\geq\frac{3\sqrt{\rmax}}{2\alpha}} u(t,\theta) d\theta \leq 1$ and $\abs{\int_{|\theta|\geq\frac{3\sqrt{\rmax}}{2\alpha}} r(\theta)u(t,\theta) d\theta} \leq 1$ for all $t\geq \frac{1}{3\alpha}$.
    We note that, as seen in Lemma \ref{lemma:radially nonincreasing bound}, the fact that the initial condition $u_0^\sigma$ of Assumption \ref{ass:large select} is radially nonincreasing implies the corresponding solution $u^\sigma$ is itself radially nonincreasing.
    Integrating the main equation (\ref{eq:main}) with respect to $\theta$, we get
    \begin{equation}\label{truc}
        \rho_{u^\sigma}'(t) = \int_{-\infty}^{+\infty} r(\theta) u^\sigma(t,\theta) d\theta - \rho_{u^\sigma}(t)^2 - \int_{-\infty}^{+\infty} f(u^\sigma(t,\theta))d\theta.
    \end{equation}
    Thanks to the above inequalities, we can write $\int_{-\infty}^{+\infty} r(\theta) u^\sigma(t,\theta) d\theta
    \geq \int_{-\frac{3\sqrt{\rmax}}{2\alpha}}^{\frac{3\sqrt{\rmax}}{2\alpha}} r(\theta)u(t,\theta)d\theta - 1$ for all $t\geq\frac{1}{3\alpha}$.
    Since $r$ and $u^\sigma(t,\cdot)$ are both  radially nonincreasing, we can apply Chebyshev's integral inequality to get:
    \begin{align*}
        \int_{-\frac{3\sqrt{\rmax}}{2\alpha}}^{\frac{3\sqrt{\rmax}}{2\alpha}} r(\theta)u^\sigma(t,\theta)d\theta
        &\geq
        \frac{\alpha}{3\sqrt{\rmax}}
        \left(\int_{-\frac{3\sqrt{\rmax}}{2\alpha}}^{\frac{3\sqrt{\rmax}}{2\alpha}} \left(\rmax - \alpha^2\theta^2\right) d\theta\right)
        \left(\int_{-\frac{3\sqrt{\rmax}}{2\alpha}}^{\frac{3\sqrt{\rmax}}{2\alpha}} u^\sigma(t,\theta)d\theta\right)
        \\
        &=
        \frac{\rmax}{4} \left(\rho_{u^\sigma}(t) - \int_{|\theta|\geq\frac{3\sqrt{\rmax}}{2\alpha}} u^\sigma(t,\theta)d\theta\right)
        \geq
        \frac{ \rmax}{4} (\rho_{u^\sigma}(t) - 1),
    \end{align*}
    which yields $\int_{-\infty}^{+\infty} r(\theta) u^\sigma(t,\theta) d\theta \geq \frac{\rmax}{4}(\rho_{u^\sigma}(t) - 1) - 1$ for all $t\geq \frac{1}{3\alpha}$.
    For the last term in \eqref{truc}, we use Assumption \ref{ass:large select} to write
    \begin{align*}
        \int_{-\infty}^{+\infty} f(u^\sigma(t,\theta))d\theta
        &=
        \int_{-\frac{3\sqrt{\rmax}}{2\alpha}}^{\frac{3\sqrt{\rmax}}{2\alpha}} f(u^\sigma(t,\theta))d\theta
        + \int_{|\theta|\geq\frac{3\sqrt{\rmax}}{2\alpha}} f(u^\sigma(t,\theta))d\theta
        \\
        &\leq
        \frac{3\sqrt{\rmax}}{\alpha} \norm{f}_{L^\infty([0,+\infty))}
        + \norm{f'}_{L^\infty([0,+\infty))}\int_{|\theta|\geq\frac{3\sqrt{\rmax}}{2\alpha}} u^\sigma(t,\theta) d\theta
        \\
        &\leq
        6\rmax\frac{\sqrt{\rmax}}{\alpha}
        + 2\rmax
        \leq 8\rmax,
    \end{align*}
    for all $t\geq\frac{1}{3\alpha}$.
    Putting all our estimates together into \eqref{truc}, we have
    \begin{align*}
        \rho_{u^\sigma}'(t) \geq \frac{\rmax}{4}(\rho_{u^\sigma}(t) - 33) - \rho_{u^\sigma}(t)^2 - 1,
    \end{align*}
    for all $t\geq \frac{1}{3\alpha}$.
    We see that, if $\rmax$ is large enough, the polynomial $\rho\mapsto \frac{\rmax}{4}(\rho-33)-\rho^2-1$ has two positive real roots, the lowest of which we note $\rho^*$.
    We note as well that $\rho^*$ converges to $33$ as $\rmax\longrightarrow+\infty$, and we thus suppose $\rmax$ to be large enough so that $\rho^* < 34$.
    Using Proposition \ref{prop:rho>r}, we also suppose $\rmax$ is large enough so that $\rho_{u^\sigma}(\frac{1}{3\alpha}) > 34 > \rho^*$.
    We treat $t=\frac{1}{3\alpha}$ as an initial time and apply the comparison principle to the system above, which allows us to bound $\rho_{u^\sigma}$ from below by a subsolution that converges to the largest positive root of the polynomial in $\rho$.
    Therefore, $\liminf_{t\to+\infty} \rho_{u^\sigma}(t) > 0$ and $u^\sigma$ persists.
\end{proof}

\subsection{Existence of two stationary states}\label{ss:proof-two-stationary}

\begin{proof}[Proof of Theorem \ref{th:etats-stats}]
Recall that the fitness function is here assumed constant $r(\theta)=r$. Let $\lambda \in [0,r)$ and set $$g_\lambda(s):= (\lambda-r)s +f(s) \hbox{ and }G_\lambda(v) := \int_0^v g_\lambda(s)\, ds.$$We show that there exists a unique $\alpha_\lambda>0$  such that 
\begin{equation}\label{eq:alphalambda*}
    G_\lambda (\alpha_\lambda)= 0.
\end{equation}
Assumption~\ref{ass:bump} on $f$ imply that there is a unique $s_\lambda \in [\varepsilon,2\varepsilon)$ such that $g_\lambda(s) >0$ for $s\in (0,s_\lambda)$ and $g_\lambda(s) <0$ for $s> s_\lambda$. Thus the function $G_\lambda$ is increasing in $(0, s_\lambda)$ and decreasing in $(s_\lambda, 2\, \varepsilon)$. Moreover, we have $G_\lambda(0) = 0$ and, from the left inequality in \eqref{eq:hyp_tech_f}, $G_\lambda(2\,\varepsilon) > 0$.
Additionally, $G_\lambda(v) = G_\lambda(2 \,\varepsilon) + \frac{\lambda -r }{2}\, (v^2 - 4\, \varepsilon^2) \to -\infty$ as $v \to +\infty$. Finally, this shows that $\alpha_\lambda$ is uniquely defined by \eqref{eq:alphalambda*}, and that $\alpha_\lambda > 2 \, \varepsilon$. Moreover, 
\begin{equation} \label{eq:alphalambda_expr}
    (r-\lambda)\frac{\alpha_\lambda^2}{2}=\int_0^{2\varepsilon} f(s)\, ds,
\end{equation}
thus $\lambda\mapsto\alpha_\lambda$ is increasing in $[0,r)$ and 
\begin{equation}\label{eq:alphalambdainfty}
\alpha_\lambda \to +\infty\hbox{ as }\lambda \to r.    
\end{equation}

The Cauchy-Lipschitz theorem provides the existence and uniqueness of the maximal solution $p_\lambda$ to the ODE Cauchy problem
\begin{equation}\label{eq:ode1}
	\left\{
	\begin{aligned}
		& p_\lambda''(\theta) = g_\lambda(p_\lambda), \ \theta>0
		\\
		&p_\lambda(0) = \alpha_\lambda \hbox{ and }p_\lambda'(0)=0.
	\end{aligned}
	\right.
\end{equation}
Moreover, since $\alpha_\lambda > 2 \, \varepsilon$, by continuity, we can define 
$$\theta_0:=\sup\{\theta \hbox{ s.t. }p_\lambda> 2 \,\varepsilon \hbox{ in }(0,\theta)\}>0.$$
In the interval $\theta\in [0,\theta_0]$, $f(p_\lambda(\theta))=0$ and
\begin{equation} \label{eq:sol_cos}
    p_\lambda(\theta)= \alpha_\lambda \cos(\theta \sqrt{r-\lambda}), \hbox{ for all }\theta \in [0,\theta_0].
\end{equation}
Thus, we have 
\begin{equation}\label{eq:theta0}
    \theta_0=\frac{\arccos(2\varepsilon/\alpha_\lambda)}{\sqrt{r-\lambda}},
\end{equation}
and
\begin{equation} \label{eq:int1}
    \int_0^{\theta_0} p_\lambda (\theta)\, d\theta= \sqrt{\frac{\alpha_\lambda^2-4\varepsilon^2}{r-\lambda}}.
\end{equation}

The solution $p_\lambda$ of \eqref{eq:ode1} satisfies $p_\lambda''(\theta)=g_\lambda(p_\lambda(\theta))$. Multiplying this equation by $p_\lambda'$ and integrating over $(0,\theta)$ we get
\begin{equation}\label{eq:EDO1deg}
    (p_\lambda')^2(\theta)-(p_\lambda')^2(0) = 2\, \int_0^\theta p_\lambda'(z) g_\lambda(p_\lambda(z)) \, dz.
\end{equation}
We already know that $p_\lambda$ is decreasing in $(0,\theta_0]$, with $p'(\theta_0)<0$ (from \eqref{eq:sol_cos} and the definition of $\theta_0$). We now define
$$\theta_1:=\sup\{\theta \ge \theta_0 \hbox{ s.t. }p_\lambda'< 0 \hbox{ in }[\theta_0,\theta)\}.$$By continuity of $p'_\lambda,$ we have $\theta_1>\theta_0$. 

Using \eqref{eq:EDO1deg} together with $p_\lambda'(0)=0$, for $\theta\in (0,\theta_1]$:
\begin{align*}
    (p_\lambda')^2(\theta) & = 2 G_\lambda(p_\lambda(\theta))-2G_\lambda(p_\lambda(0))\\ & =  2 G_\lambda(p_\lambda(\theta))-2 G_\lambda(\alpha_\lambda) \\ & =  2 G_\lambda(p_\lambda(\theta)),
\end{align*}
from the definition~\eqref{eq:alphalambda*} of $\alpha_\lambda$.  Thus, $G_\lambda(p_\lambda(\theta))>0$ in $(0,\theta_1)$ and we have
\begin{equation}\label{eq:ODE2}
	\left\{
	\begin{aligned}
		&  p_\lambda'(\theta) = -\sqrt{2 G_\lambda(p_\lambda(\theta))}, \ \theta \in (\theta_0,\theta_1], \\
		& p_\lambda(\theta_0) = 2 \, \varepsilon.
	\end{aligned}
	\right.
\end{equation}
Assume that $\theta_1$ is finite. By continuity of $p'_\lambda,$ we get $p_\lambda'(\theta_1)=0$  and therefore $G_\lambda(p_\lambda(\theta_1)) = 0$. We recall that the function $G_\lambda$ is positive in $(0,2\varepsilon]$ and $G_\lambda(0)=0$. Thus necessarily $p_\lambda(\theta_1)=0$. Moreover, the regularity assumptions on $f$ imply that $g_\lambda$ is globally $K-$Lipschitz continuous, for some $K>0$. Thus,
$\ds G_\lambda(v)=\int_0^v g_\lambda(s)\, ds \le  K \frac{v^2}{2},$ which implies that $\sqrt{G_\lambda}$ is locally Lipschitz continuous at $0$. Cauchy-Lipschitz theorem then contradicts $p_\lambda(\theta_1)=0$ (thus, $\theta_1=+\infty$) and implies  that $p_\lambda>0$ in $(\theta_0,+\infty)$.

Let us find an upper bound for $p_\lambda$. We note that
\begin{equation*}
    G_\lambda(s)= \frac{\lambda \, s^2}{2} + G_0(s).
\end{equation*}
As already noted at the beginning of the proof, $G_0>0$ in $(0,2\, \varepsilon)$, thus, in this interval,
\begin{equation} \label{eq:ineqG}
    G_\lambda(s) >  \frac{\lambda \, s^2}{2}.
\end{equation}
Since $p_\lambda'(\theta)<0$ in $(0,+\infty)$, $p_\lambda(\theta)<2 \, \varepsilon$ for all $\theta \in (\theta_0,+\infty)$, and using~\eqref{eq:ODE2} together with~\eqref{eq:ineqG}, we get:
\begin{equation}\label{eq:ODE3}
	\left\{
	\begin{aligned}
		&  p_\lambda'(\theta) \le -\sqrt{\lambda} \, p_\lambda(\theta), \ \theta \in (\theta_0,+\infty), \\
		& p_\lambda(\theta_0) = 2 \, \varepsilon.
	\end{aligned}
	\right.
\end{equation}
Therefore, 
\begin{equation}\label{eq:sur-solp}
    p_\lambda(\theta) \le 2 \, \varepsilon \, e^{ -\sqrt{\lambda} (\theta-\theta_0)} \hbox{ for }\theta>\theta_0.
\end{equation}
Finally, using \eqref{eq:int1} and~\eqref{eq:sur-solp}, we get
\begin{equation}\label{eq:intp/2}
   \sqrt{\frac{\alpha_\lambda^2-4\varepsilon^2}{r-\lambda}} \le \int_0^{+\infty} p_\lambda(\theta)\, d\theta  \le \sqrt{\frac{\alpha_\lambda^2-4\varepsilon^2}{r-\lambda}} + \frac{2\,\varepsilon}{\sqrt{\lambda}}.
\end{equation}

Let us extend the function $p_\lambda$ to $\mathbb{R}$ by setting $p_\lambda(\theta) = p_\lambda(-\theta)$ for all $\theta < 0$. We readily check that $p_\lambda \in C^2(\mathbb{R})$. Moreover, setting 
$$j: \ (0,r) \to \R, \ \lambda \mapsto \int_{-\infty}^{+\infty} p_\lambda(\theta) \, d\theta - \lambda,$$we get that
$p_\lambda$ is a nontrivial stationary solution of our main problem~\eqref{eq:main} if and only if $j(\lambda)=0$. 
By the continuous dependence of $p_\lambda$ on the parameter $\lambda$ (via the Cauchy-Lipschitz theorem), together with the exponential decay \eqref{eq:sur-solp}, the continuity of $j$ in $(0,r)$ follows by dominated convergence. 
Using \eqref{eq:intp/2} together with the properties $\alpha_\lambda> 2\varepsilon$ for all $\lambda \in [0,r)$ and~\eqref{eq:alphalambdainfty}, we note that 
\begin{equation} \label{eq:limit_int_p}
 \lim\limits_{\lambda\to0} j(\lambda)>0 \hbox{ and } \lim\limits_{\lambda\to r} j(\lambda) = +\infty.
\end{equation}

Let us define
\begin{equation}\label{eq:intp,j}
    k(\lambda) := 2\sqrt{\frac{\alpha_\lambda^2 - 4\varepsilon^2}{r - \lambda}} + \frac{4\varepsilon}{\sqrt{\lambda}} - \lambda \geq j(\lambda).
\end{equation}
%We have $\lim_{\lambda \to 0} j(\lambda) = \lim_{\lambda \to r} j(\lambda) = +\infty$ (see \eqref{eq:alphalambdainfty}) 
%and $j$ is a convex function. Thus, it admits at most two zeroes. 
From \eqref{eq:alphalambda_expr}, we deduce:
\begin{equation} \label{eq:alphalambda}
    \alpha_\lambda = \sqrt{\frac{2\int_0^{2\varepsilon} f}{r - \lambda}},
\end{equation}
and therefore,
\begin{equation} \label{eq:condition_intf_r}
    k(r/2) <0 \Leftrightarrow  \int_0^{2\varepsilon} f < \frac{r^4}{128} + 2 \varepsilon^2 r -\frac{\sqrt{2} \varepsilon}{8} r^{5/2}.
\end{equation}
Thus, \eqref{eq:condition_intf_r} gives a sufficient condition for the existence of at least two zeroes $0<\lambda_1< r/2 < \lambda_2<r$ of the function $j$, corresponding to the existence of two stationary solutions $p_{\lambda_1}$ and $p_{\lambda_2}$ of~\eqref{eq:main}. Since  $\lambda\mapsto\alpha_\lambda$ is increasing in $[0,r)$ and using \eqref{eq:ode1}, the comparison principle for ODEs implies that  $p_{\lambda_1}<p_{\lambda_2}$.
\end{proof}

\section*{Acknowledgements}
This work was supported by the European Union's Horizon Europe research and innovation programme
through the BCOMING project (\emph{Biodiversity Conservation to Mitigate the Risks of Emerging Infectious Diseases}),
Grant Agreement No.~101059483.
This work was also supported by the ANR project ReaCh (ANR-23-CE40-0023-01). We also thank Olivier Bonnefon, Guillaume Fournié, Sylvain Gandon, and Samuel Soubeyrand for helpful discussions.

%\newpage
%\nocite{*}
%\printbibliography

\bibliographystyle{siam}
%\bibliography{biblio_lionel}
%\bibliographystyle{plain}
%\bibliography{biblio_lionel} 

\end{document}